\documentclass{article}%
\usepackage{amssymb}
\usepackage{amsfonts}
\usepackage{amsmath}
\usepackage{graphicx}%
\setcounter{MaxMatrixCols}{30}
%TCIDATA{OutputFilter=latex2.dll}
%TCIDATA{Version=5.00.0.2606}
%TCIDATA{CSTFile=40 LaTeX article.cst}
%TCIDATA{Created=Monday, July 16, 2012 10:07:15}
%TCIDATA{LastRevised=Wednesday, January 02, 2013 15:00:16}
%TCIDATA{<META NAME="GraphicsSave" CONTENT="32">}
%TCIDATA{<META NAME="SaveForMode" CONTENT="1">}
%TCIDATA{BibliographyScheme=Manual}
%TCIDATA{<META NAME="DocumentShell" CONTENT="Standard LaTeX\Standard LaTeX Article">}
%TCIDATA{Language=American English}
\newtheorem{theorem}{Theorem}

\newtheorem{definition}[theorem]{Definition}
\newtheorem{example}[theorem]{Example}

\newtheorem{lemma}[theorem]{Lemma}

\newtheorem{proposition}[theorem]{Proposition}
\newtheorem{remark}[theorem]{Remark}

\newenvironment{proof}[1][Proof]{\noindent\textbf{#1.} }{\ \rule{0.5em}{0.5em}}
\begin{document}

\title{ $ADE$ bundles over surfaces with \\$ADE$ singularities}
\author{Yunxia Chen \& Naichung Conan Leung}
\date{}
\maketitle

\begin{abstract}
Given a compact complex surface $X$ with an $ADE$ singularity and
$\mathit{p}_{g}=0$, we construct $ADE$ bundles over $X$ and its minimal
resolution $Y$. Furthermore, we descibe their minuscule representation bundles
in terms of configurations of (reducible) $(-1)$-curves in $Y$.

\end{abstract}

\section{Introduction}

It has long been known that there are deep connections between Lie theory and
the geometry of surfaces. A famous example is an amazing connection between
Lie groups of type $E_{n}$ and del Pezzo surfaces $X$ of degree $9-n$ for
$1\leq n\leq8$. The root lattice of $E_{n}$ can be identified with
$K_{X}^{\perp}$, the orthogonal complement to $K_{X}$ in $Pic(X)$.
Furthermore, all the lines in $X$ form a representation of $E_{n}$. Using the
configuration of these lines, we can construct an $E_{n}$ Lie algebra bundle
over $X$ \cite{LZ}. If we restrict it to the anti-canonical curve in $X$,
which is an elliptic curve $\Sigma$, then we obtain an isomorphism between the
moduli space of degree $9-n$ del Pezzo surfaces which contain $\Sigma$ and the
moduli space of $E_{n}$-bundles over $\Sigma$. This work is motivated from
string/$F$-theory duality, and it has been studied extensively by
Friedman-Morgan-Witten \cite{FM}\cite{FMW}\cite{FMW2}, Donagi \cite{BDL}%
\cite{BDO}\cite{CDW}\cite{D}, Leung-Zhang \cite{LXZ}\cite{LZ}\cite{LZ2} and
others \cite{CM}\cite{Lee}\cite{L1}\cite{L2}.

In this paper, we study the relationships between simply-laced, or $ADE$, Lie
theory and rational singularities of surfaces. Suppose%
\[
\pi:Y\rightarrow X
\]
is the minimal resolution of a compact complex surface $X$ with a rational
singularity. Then the dual graph of the exceptional divisor $\sum_{i=1}%
^{n}C_{i}$ in $Y$ is an$\ ADE$ Dynkin diagram. From this we have an $ADE$ root
system $\Phi:=\{\alpha=\sum a_{i}[C_{i}]|\alpha^{2}=-2\}$ and we can construct
an $ADE$ Lie algebra bundle over $Y$:
\[
\mathcal{E}_{0}^{\mathfrak{g}}:=O_{Y}^{\oplus n}\oplus\bigoplus_{\alpha\in
\Phi}O_{Y}(\alpha)
\]
Even though this bundle can not descend to $X$, we show that it can be
deformed to one which can descend to $X$ provided that $p_{g}\left(  X\right)
=0$.

\begin{theorem}
$($Proposition \ref{Lie bracket}, Proposition \ref{holomorphic}, Theorem
\ref{thm1} and Lemma \ref{surj}$)$

Assume $Y$ is the minimal resolution of a surface $X$ with a rational
singularity at $p$ of type $\mathfrak{g}$ and $C=\Sigma_{i=1}^{n}C_{i}$ is the
exceptional divisor. If $p_{g}\left(  X\right)  =0$, then

$(i)$ given any $(\varphi_{C_{i}})_{i=1}^{n}\in\Omega^{0,1}(Y,\bigoplus
_{i=1}^{n}O(C_{i}))$ with $\overline{\partial}\varphi_{C_{i}}=0$ for every
$i$, it can be extended to $\varphi=(\varphi_{\alpha})_{\alpha\in\Phi^{+}}%
\in\Omega^{0,1}(Y,\bigoplus_{\alpha\in\Phi^{+}}O(\alpha))$ such that
$\overline{\partial}_{\varphi}:=\overline{\partial}+ad(\varphi)$ is a
holomorphic structure on $\mathcal{E}_{0}^{\mathfrak{g}}$. We denote this new
holomorphic bundle as $\mathcal{E}_{\varphi}^{\mathfrak{g}}$.

$(ii)$ Such a $\overline{\partial}_{\varphi}$ is compatible with the Lie
algebra structure.

$(iii)$ $\mathcal{E}_{\varphi}^{\mathfrak{g}}$ is trivial on $C_{i}$ if and
only if $[\varphi_{C_{i}}|_{C_{i}}]\neq0\in H^{1}(C_{i},O_{C_{i}}(C_{i}%
))\cong\mathbb{C}$.

$(iv)$ There exists $[\varphi_{C_{i}}]\in H^{1}(Y,O(C_{i}))$ such that
$[\varphi_{C_{i}}|_{C_{i}}]\neq0$.

$(v)$ Such a $\mathcal{E}_{\varphi}^{\mathfrak{g}}$ can descend to $X$ if and
only if $[\varphi_{C_{i}}|_{C_{i}}]\neq0$ for every $i$.
\end{theorem}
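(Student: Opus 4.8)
The plan is to prove the five parts essentially in the order stated, since each later part leans on the earlier ones, and the whole package amounts to: build a holomorphic structure on $\mathcal{E}_0^{\mathfrak{g}}$ by deforming $\bar\partial$ with a Lie-algebra-valued $(0,1)$-form $\varphi$; check this deformation respects the bracket; identify triviality on each $C_i$ with a cohomological non-vanishing condition; produce a class making that condition hold; and finally show the descent to $X$ is governed by simultaneous non-vanishing on all the $C_i$.

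For \textbf{(i)}, I would set up the deformation term by term along the height grading of $\Phi^+$. Writing $\varphi=\sum_{\alpha\in\Phi^+}\varphi_\alpha$ with $\varphi_\alpha\in\Omega^{0,1}(Y,O(\alpha))$, the integrability condition $\bar\partial_\varphi^2=0$ unwinds (as in Proposition \ref{Lie bracket}, which already organizes the bracket) into $\bar\partial\varphi_\alpha + \tfrac12\sum_{\beta+\gamma=\alpha}[\varphi_\beta,\varphi_\gamma]=0$ for each $\alpha$. The simple roots $C_i$ are the minimal elements, where the quadratic term is empty, so $\bar\partial\varphi_{C_i}=0$ is exactly the given hypothesis. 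Then I induct on height: at level $\alpha$ the obstruction to solving for $\varphi_\alpha$ lives in $H^2(Y,O(\alpha))$, and here is where $p_g(X)=0$ enters — one shows $H^2(Y,O(\alpha))=0$ for every positive root $\alpha$ (this uses Serre duality, $H^2(Y,O(\alpha))\cong H^0(Y,K_Y-\alpha)^*$, together with $p_g=0$ and the fact that $\alpha$ is effective supported on the exceptional locus, so $K_Y-\alpha$ cannot be effective). One also needs the cocycle $\sum_{\beta+\gamma=\alpha}[\varphi_\beta,\varphi_\gamma]$ to be $\bar\partial$-closed, which follows formally from the Jacobi identity and the lower-height equations. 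That closes the induction and produces $\varphi$.

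For \textbf{(ii)}, compatibility with the Lie structure is the statement that $\bar\partial_\varphi$ acts as a derivation of the fiberwise bracket on $\mathcal{E}_0^{\mathfrak{g}}$; since $\bar\partial$ already is one and $ad(\varphi)$ is an inner derivation, $[ad(\varphi)s,t]+[s,ad(\varphi)t]=ad(\varphi)[s,t]$ is the Jacobi identity, so this is essentially automatic once Proposition \ref{Lie bracket} is in hand. For \textbf{(iii)}, I would restrict $\mathcal{E}_\varphi^{\mathfrak{g}}$ to a fixed $C_i\cong\mathbb{P}^1$: the relevant summands are $O_{C_i}(C_i)\cong O_{\mathbb{P}^1}(-2)$ and, in the $\mathfrak{sl}_2$ generated by $\pm C_i$, the extension data is captured by $[\varphi_{C_i}|_{C_i}]\in H^1(C_i,O_{C_i}(C_i))\cong\mathbb{C}$; a nonzero class splits the relevant $O(-2)\oplus O(2)$-type piece and, propagating through the weight spaces, makes the whole restriction trivial, while a zero class leaves a nonsplit (hence nontrivial) piece — this is a direct Grothendieck-splitting computation on $\mathbb{P}^1$. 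For \textbf{(iv)}, the existence of $[\varphi_{C_i}]\in H^1(Y,O(C_i))$ restricting to something nonzero in $H^1(C_i,O_{C_i}(C_i))$ follows from the long exact sequence of $0\to O_Y\to O_Y(C_i)\to O_{C_i}(C_i)\to0$: the connecting map lands in $H^2(Y,O_Y)=0$ (again $p_g=0$), so $H^1(Y,O(C_i))\to H^1(C_i,O_{C_i}(C_i))$ is surjective.

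Finally, for \textbf{(v)}, descent to $X$ is a question of whether $\mathcal{E}_\varphi^{\mathfrak{g}}$ is pulled back from $X$, equivalently (since $\pi$ is the minimal resolution and $R\pi_*O_Y=O_X$ as $X$ has rational singularities) whether $\mathcal{E}_\varphi^{\mathfrak{g}}$ is trivial on the whole exceptional fiber $C=\sum C_i$ — and a bundle on a tree of $\mathbb{P}^1$'s is trivial iff it is trivial on each component, which by (iii) is precisely $[\varphi_{C_i}|_{C_i}]\neq0$ for every $i$. I would make the ``only if'' direction by noting that if some $[\varphi_{C_i}|_{C_i}]=0$ then $\mathcal{E}_\varphi^{\mathfrak{g}}|_{C_i}$ is nontrivial, so it cannot be a pullback (pullbacks are trivial on fibers of $\pi$), and the ``if'' direction by a formal-function/Grauert-type argument: triviality on every $C_i$ gives triviality on the formal neighborhood, hence the bundle descends. \textbf{The main obstacle} I expect is part (i) — specifically, checking that at each height the quadratic cocycle is genuinely $\bar\partial$-closed and that the vanishing $H^2(Y,O(\alpha))=0$ holds uniformly over all positive roots; the bookkeeping of the bracket across weight spaces (keeping signs and structure constants consistent with Proposition \ref{Lie bracket}) is where the real work lies, whereas (ii)–(v) are comparatively soft once the $p_g=0$ vanishing theorems are established.
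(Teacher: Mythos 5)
Your parts (i), (ii), (iv) and (v) track the paper closely: the height induction with obstruction in $H^{2}(Y,O(\alpha))$ and closedness of the quadratic term via Jacobi is exactly Proposition \ref{holomorphic}, (ii) is Proposition \ref{Lie bracket}, (iv) is Lemma \ref{surj}, and (v) is the Friedman--Morgan descent criterion the paper invokes. Your Serre-duality proof of the key vanishing $H^{2}(Y,O(\alpha))=0$ (a nonzero section of $K_{Y}-\alpha$ multiplied by the section cutting out the effective divisor $\alpha$ would force $p_{g}(Y)>0$) is a legitimate and slightly more direct alternative to the paper's inductive Lemma \ref{cohomology}, and is the same trick the paper itself uses in Lemma \ref{cohomology2}.

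The genuine gap is part (iii), which you dispose of as ``a direct Grothendieck-splitting computation'' with the phrase ``propagating through the weight spaces.'' That propagation is precisely the content of Sections 4--7 of the paper and does not follow from the $\mathfrak{sl}_{2}$-triple alone. The restriction $\mathcal{E}_{\varphi}^{\mathfrak{g}}|_{C_{i}}$ is a filtered bundle whose graded pieces are $O_{\mathbb{P}^{1}}(\alpha\cdot C_{i})$ for all $\alpha\in\Phi$ together with $O^{\oplus n}$: one $O(\pm2)$ pair (from $\alpha=\pm C_{i}$) coupled to the Cartan summands and many $O(\pm1)$ pairs, and the off-diagonal entries are restrictions of \emph{all} the $\varphi_{\beta}$, not just $\varphi_{C_{i}}$. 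Triviality requires producing $\dim\mathfrak{g}$ holomorphic sections independent at every point of $C_{i}$, which in turn requires knowing that the relevant extension classes are nonzero \emph{and} sit in positions compatible with the triangular structure; this is what Lemmas \ref{section}, \ref{indep}, \ref{indep2}--\ref{indep4} and the case-by-case analysis (including the verification that $\varphi$ annihilates the form $q$, $c$ or $t$, which forces certain entries such as $\eta_{n,n+1}$ to vanish) are for. The paper never trivializes the adjoint bundle on $C_{i}$ directly: it proves triviality for the standard representation bundle $\mathfrak{L}_{\varphi}^{(\mathfrak{g},V)}$ built from $(-1)$-curves and then deduces it for $\mathcal{E}_{\varphi}^{\mathfrak{g}}\cong aut_{0}(\mathfrak{L}_{\varphi}^{(\mathfrak{g},V)},f)$, and for $E_{8}$ -- which has no standard representation, so your argument would have to work directly there -- it needs the separate reduction to $A_{7}$ and $D_{7}$ in Section 7; your sketch does not register $E_{8}$ as a special case at all. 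Finally, your converse direction is stated backwards: a \emph{zero} class makes the relevant extension split, and nontriviality then needs the Lemma \ref{indep3}-type argument that the surviving $O(\pm1)$ or $O(\pm2)$ pieces cannot be absorbed by the rest of the filtration, not the assertion that a ``nonsplit piece'' remains.
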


\begin{remark}
Infinitesimal deformations of holomorphic bundle structures on $\mathcal{E}%
_{0}^{\mathfrak{g}}$ are parametrized by $H^{1}(Y,End(\mathcal{E}%
_{0}^{\mathfrak{g}}))$, and those which also preserve the Lie algebra
structure are parametrized by $H^{1}(Y,ad(\mathcal{E}_{0}^{\mathfrak{g}%
}))=H^{1}(Y,\mathcal{E}_{0}^{\mathfrak{g}})$, since $\mathfrak{g}$ is
semi-simple. If $p_{g}\left(  X\right)  =q(X)=0$, e.g. rational surface, then
for any $\alpha\in\Phi^{-}$, $H^{1}(Y,O(\alpha))=0$. Hence $H^{1}%
(Y,\mathcal{E}_{0}^{\mathfrak{g}})=H^{1}(Y,\bigoplus_{\alpha\in\Phi^{+}%
}O(\alpha))$.
\end{remark}

This generalizes the work of Friedman-Morgan \cite{FM}, in which they
considered $E_{n}$ bundles over generalized del Pezzo surfaces. In this paper,
we will also describe the minuscule representation bundles of these Lie
algebra bundles in terms of $(-1)$-curves in $Y$.

Here is an outline of our results. We first study $(-1)$-curves in $Y$ which
are (possibly reducible) rational curves with self intersection $-1$. If there
exists a $(-1)$-curve $C_{0}$ in $X$ passing through $p$ with minuscule
multiplicity $C_{k}$ (Definition \ref{min}), then $(-1)$-curves $l$'s in $Y$
with $\pi(l)=C_{0}$ form the minuscule representation$\footnote{Here $V$ is
the lowest weight representation with lowest weight dual to $-C_{k}$, i.e. $V$
is dual to the highest weight representation with highest weight dual to
$C_{k}$.}$ $V$ of $\mathfrak{g}$ corresponding to $C_k$ (Proposition
\ref{representation}). When $V$ is the standard representation of
$\mathfrak{g}$, the configuration of these $(-1)$-curves determines a
symmetric tensor $f$ on $V$ such that $\mathfrak{g\ }$is the space of
infinitesimal symmetries of $(V,f)$. We consider the bundle%
\[
\mathfrak{L}_{0}^{(\mathfrak{g},V)}:=\bigoplus_{\substack{l:(-1)-curve\\\pi
(l)=C_{0}}}O_{Y}(l)
\]
over $Y$ constructed from these $(-1)$-curves $l$'s. This bundle can not
descend to $X$ as it is not trivial over each $C_i$.\footnote{Unless specify
otherwise, $C_{i}$ always refers to an irreducible component of $C$, i.e.
$i\neq0$.}

\begin{theorem}
$($Theorem \ref{thm2} and Theorem \ref{thm3}$)$

For the bundle $\mathfrak{L}_{0}^{(\mathfrak{g},V)}$ with the corresponding
minuscule representation $\rho:\mathfrak{g}\longrightarrow End(V)$,

$(i)$ there exists $\varphi=(\varphi_{\alpha})_{\alpha\in\Phi^{+}}\in
\Omega^{0,1}(Y,\bigoplus_{\alpha\in\Phi^{+}}O(\alpha))$ such that
$\overline{\partial}_{\mathfrak{\varphi}}:=\overline{\partial}_{0}%
+\rho(\varphi)$ is a holomorphic structure on $\mathfrak{L}_{0}^{(\mathfrak{g}%
,V)}$. We denote this new holomorphic bundle as $\mathfrak{L}_{\varphi
}^{(\mathfrak{g},V)}$.

$(ii)$ $\mathfrak{L}_{\varphi}^{(\mathfrak{g},V)}$ is trivial on $C_{i}$ if
and only if $[\varphi_{C_{i}}|_{C_{i}}]\neq0\in H^{1}(Y,O_{C_{i}}(C_{i}))$.

$(iii)$ When $V$ is the standard representation of $\mathfrak{g}$, there
exists a holomorphic fiberwise symmetric multi-linear form%
\[
f:\bigotimes^{r}\mathfrak{L}_{\varphi}^{(\mathfrak{g},V)}\longrightarrow
O_{Y}(D)
\]
with $r=0,2,3,4$ when $\mathfrak{g}=A_{n},D_{n},E_{6},E_{7}$ respectively such
that $\mathcal{E}_{\varphi}^{\mathfrak{g}}\cong aut_{0}(\mathfrak{L}_{\varphi
}^{(\mathfrak{g},V)},f)$.
\end{theorem}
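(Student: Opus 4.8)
The strategy is to transport the construction of the first theorem to the representation bundle through the dictionary of Proposition~\ref{representation} between $(-1)$-curves over $C_{0}$ and weights of $V$, and then to build $f$ out of the essentially unique $\mathfrak{g}$-invariant symmetric tensor on the standard module. For \emph{(i)}: by Proposition~\ref{representation} the $(-1)$-curves $l$ with $\pi(l)=C_{0}$ are labelled $l=l_{\lambda}$ by the weights $\lambda$ of $V$, compatibly with the root grading, in the sense that $[l_{\lambda+\alpha}]=[l_{\lambda}]+\alpha$ whenever $\lambda+\alpha$ is again a weight of $V$ and $e_{\alpha}$ annihilates $V_{\lambda}$ otherwise. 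Hence $\rho$ promotes to a $\overline{\partial}_{0}$-holomorphic bundle map $\mathcal{E}_{0}^{\mathfrak{g}}\to End(\mathfrak{L}_{0}^{(\mathfrak{g},V)})$ whose component $O(\alpha)\otimes O_{Y}(l_{\lambda})\to O_{Y}(l_{\lambda+\alpha})$ is, when nonzero, the tautological isomorphism of divisor line bundles scaled by the relevant matrix coefficient of $\rho$; in particular $\rho(\varphi)\in\Omega^{0,1}(Y,End(\mathfrak{L}_{0}^{(\mathfrak{g},V)}))$ for any $\varphi=(\varphi_{\alpha})_{\alpha\in\Phi^{+}}$. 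Now take the $\varphi$ produced by Proposition~\ref{holomorphic}; since $\mathfrak{g}$ is semisimple, flatness of $\overline{\partial}+ad(\varphi)$ on $\mathcal{E}_{0}^{\mathfrak{g}}$ is the Maurer--Cartan equation $\overline{\partial}\varphi+\tfrac{1}{2}[\varphi\wedge\varphi]=0$. As $\rho$ is a homomorphism acting by $\overline{\partial}_{0}$-parallel maps, $\overline{\partial}_{0}\rho(\varphi)=\rho(\overline{\partial}\varphi)$ and $\rho(\varphi)\wedge\rho(\varphi)=\tfrac{1}{2}\rho([\varphi\wedge\varphi])$, so $(\overline{\partial}_{0}+\rho(\varphi))^{2}=\rho(\overline{\partial}\varphi+\tfrac{1}{2}[\varphi\wedge\varphi])=0$ and $\overline{\partial}_{\varphi}:=\overline{\partial}_{0}+\rho(\varphi)$ is a holomorphic structure, defining $\mathfrak{L}_{\varphi}^{(\mathfrak{g},V)}$.

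For \emph{(ii)}: restrict to one $C_{i}\cong\mathbb{P}^{1}$ and decompose $V$ under the $\mathfrak{sl}_{2}$-triple of the simple root $\alpha_{i}=[C_{i}]$. Minusculity forces every weight to pair with $\alpha_{i}^{\vee}$ in $\{-1,0,1\}$, so $V$ is a sum of trivial and standard two-dimensional $\mathfrak{sl}_{2}$-modules; reading degrees off $[l_{\lambda}]\cdot[C_{i}]$ gives $\mathfrak{L}_{0}^{(\mathfrak{g},V)}|_{C_{i}}\cong O_{C_{i}}^{\oplus a}\oplus(O_{C_{i}}(1)\oplus O_{C_{i}}(-1))^{\oplus b}$, with $b\geq1$ because the standard module is faithful. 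Among the $\varphi_{\alpha}$ only $\varphi_{C_{i}}$ contributes cohomologically over $C_{i}$: for $\alpha\in\Phi^{+}$, $\alpha\neq\alpha_{i}$, one has $\langle\alpha,\alpha_{i}^{\vee}\rangle\le1$ in a simply-laced root system, so $O(\alpha)|_{C_{i}}$ has degree $\ge-1$, $H^{1}$ vanishes, and $\varphi_{\alpha}|_{C_{i}}$ is a $\overline{\partial}$-boundary removable by a gauge transformation near $C_{i}$. Thus, up to isomorphism, $\mathfrak{L}_{\varphi}^{(\mathfrak{g},V)}|_{C_{i}}$ deforms only inside the two-dimensional summands, each becoming the extension $0\to O_{C_{i}}(-1)\to E\to O_{C_{i}}(1)\to0$ whose class is a nonzero multiple of $[\varphi_{C_{i}}|_{C_{i}}]\in H^{1}(C_{i},O_{C_{i}}(C_{i}))\cong\mathbb{C}$; such an $E$ is $O_{C_{i}}^{\oplus2}$ exactly when that class is nonzero, and $O_{C_{i}}(1)\oplus O_{C_{i}}(-1)$ otherwise, by the same $\mathbb{P}^{1}$-extension computation used for the Lie algebra bundle in the first theorem. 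Since the trivial summands are inert, $\mathfrak{L}_{\varphi}^{(\mathfrak{g},V)}$ is trivial on $C_{i}$ if and only if $[\varphi_{C_{i}}|_{C_{i}}]\neq0$.

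For \emph{(iii)}: the standard module carries a $\mathfrak{g}$-invariant symmetric $r$-linear form $f_{V}\in\mathrm{Sym}^{r}V^{*}$, with $r=0,2,3,4$ for $\mathfrak{g}=A_{n},D_{n},E_{6},E_{7}$ (none; the defining quadratic form; the cubic invariant of the $27$; Freudenthal's quartic invariant of the $56$), unique up to scale and, with the appropriate reading of $aut_{0}$ in the $E_{7}$ case, characterised by $aut_{0}(V,f_{V})=\rho(\mathfrak{g})$. The crucial point is that $[l_{\lambda}]-[l_{\mu}]$ depends $\mathbb{Z}$-linearly only on $\lambda-\mu$, which lies in the root lattice $Q$ (Proposition~\ref{representation}); call this linear map $L$. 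Hence, for every $r$-tuple of weights with $\sum_{j}\lambda_{j}=0$, and fixing any weight $\mu_{0}$, $\sum_{j}[l_{\lambda_{j}}]=r[l_{\mu_{0}}]+L\bigl(\sum_{j}(\lambda_{j}-\mu_{0})\bigr)=r[l_{\mu_{0}}]+L(-r\mu_{0})=:D$, a class independent of the tuple (note $r\mu_{0}\in Q$, since all weights of $V$ are congruent modulo $Q$ and sum to $0$), and on that summand the line bundles multiply tautologically to $O_{Y}(D)$. Feeding the coefficients of $f_{V}$ through these identifications yields a symmetric $\overline{\partial}_{0}$-holomorphic $f_{0}:\bigotimes^{r}\mathfrak{L}_{0}^{(\mathfrak{g},V)}\to O_{Y}(D)$ (for $A_{n}$, $r=0$ and $f_{0}$ is vacuous). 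The $\mathfrak{g}$-invariance of $f_{V}$ says precisely that $f_{0}$ is annihilated by the Leibniz action of $\rho$ on tensor powers; since $O_{Y}(D)$ retains its undeformed structure, $f_{0}$ stays holomorphic for $\overline{\partial}_{\varphi}$, and we set $f:=f_{0}$. Finally $\rho:\mathcal{E}_{\varphi}^{\mathfrak{g}}\to End(\mathfrak{L}_{\varphi}^{(\mathfrak{g},V)})$ is holomorphic (it intertwines $ad(\varphi)$ with the Leibniz action of $\rho(\varphi)$, as $\rho$ is a Lie homomorphism), lands in $aut_{0}(\mathfrak{L}_{\varphi}^{(\mathfrak{g},V)},f)$ by invariance of $f_{V}$, and is fibrewise the isomorphism $\mathfrak{g}\xrightarrow{\ \sim\ }aut_{0}(V,f_{V})$; a fibrewise-bijective holomorphic bundle map is an isomorphism, giving $\mathcal{E}_{\varphi}^{\mathfrak{g}}\cong aut_{0}(\mathfrak{L}_{\varphi}^{(\mathfrak{g},V)},f)$.

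The formal parts --- (i), (ii), and the deformation-theoretic bookkeeping of (iii) --- are routine once Proposition~\ref{representation} is available. I expect the main obstacle to lie in the representation-theoretic and geometric input for (iii): establishing the affine-linearity of $\lambda\mapsto[l_{\lambda}]$ (so that $D$ is well defined) and identifying $D$ explicitly, and then verifying --- most delicately for $E_{7}$, where the symplectic pairing and the precise meaning of $aut_{0}$ must be tracked together --- that $\rho(\mathfrak{g})$ is exactly the subalgebra of traceless endomorphisms of $V$ preserving $f_{V}$. One should also note that assembling $f_{0}$ carries no cocycle obstruction, the identifications $\bigotimes_{j}O_{Y}(l_{\lambda_{j}})\cong O_{Y}(D)$ being literal equalities of divisor line bundles.
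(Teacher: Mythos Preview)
Your argument is correct and takes a genuinely different route from the paper. The paper works case by case: \S4--6 construct $\mathfrak{L}_\eta^{(\mathfrak{g},V)}$ together with the form $f$ separately for $A_n$, $D_n$, $E_6$, $E_7$ (for $E_6$ and $E_7$ the compatibility of $\overline{\partial}_\eta f=0$ with $\overline{\partial}_\eta^2=0$ is checked by computer, Propositions~\ref{c} and~\ref{t}), and \S7 then runs your direction backwards, showing that $\overline{\partial}_\eta f=0$ forces $\eta_{i,j}=n_{\alpha,w}\varphi_\alpha$, so that $\overline{\partial}_\eta=\overline{\partial}_0+\rho(\varphi)$ and hence $aut(\mathfrak{L}_\eta,f)=\mathcal{E}_\varphi^{\mathfrak{g}}$. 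For (ii) the paper builds trivializing sections explicitly over each $C_i$ via Lemmas~\ref{section}--\ref{indep3} and~\ref{indep2}--\ref{indep4}, with separate lemmas for the situation (occurring for $D_n|_{C_n}$ and the $E_n$ cases) where the $O(\pm1)$ pairs are not adjacent in the height filtration. Your approach is uniform in $\mathfrak{g}$ and more conceptual; the paper's buys explicit matrices, section formulas, and the concrete identification of $D$ (your $D$ is the paper's $F$, $K'$, $2K'$ in the $D_n$, $E_6$, $E_7$ cases).

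One point in your (ii) deserves another sentence: knowing that each $\varphi_\alpha|_{C_i}$ with $\alpha\neq\alpha_i$ is $\overline{\partial}$-exact does not by itself let you gauge them all away, since conjugating by $\exp(\rho(\psi_\beta x_\beta))$ introduces bracket corrections at higher height. You should gauge inductively on height inside the unipotent group $\exp(\rho(\mathfrak{n}^+))$, and make explicit the reason this terminates correctly: $\alpha_i$ is \emph{simple}, so no bracket $[\mathfrak{g}_\beta,\mathfrak{g}_\gamma]$ with $\beta,\gamma\in\Phi^+$ can land in $\mathfrak{g}_{\alpha_i}$, and hence the class $[\varphi_{C_i}|_{C_i}]$ is untouched throughout. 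The end result $\overline{\partial}_0+\varphi_{C_i}|_{C_i}\cdot\rho(x_{\alpha_i})$ is then visibly block-diagonal for the $\mathfrak{sl}_2(\alpha_i)$-decomposition of $V$, and your extension description follows.
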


When $V$ is a minuscule representation of $\mathfrak{g}$, there exists a
unique holomorphic structure on $\mathfrak{L}_{0}^{(\mathfrak{g}%
,V)}:=\bigoplus_{l}O(l)$ such that the action of $\mathcal{E}_{\varphi
}^{\mathfrak{g}}$ on this bundle is holomorphic and it can descend to $X$ as well.

\begin{example}
When we blowup 2 distinct points, we have a surface $Y$ with 2 $\left(
-1\right)  $-curves $l_{1}$ and $l_{2}$ as exceptional curves. $\mathfrak{L}%
_{0}\mathfrak{:=}O_{Y}\left(  l_{1}\right)  \oplus O_{Y}\left(  l_{2}\right)
$ is a $\mathbb{C}^{2}$-bundle and the bundle $\zeta_{0}^{A_{1}}$ of its
symmetries is a $sl\left(  2\right)  $- or $A_{1}$-bundle over $Y.$

When the 2 points become infinitesimally close, then $C_{1}=l_{2}-l_{1}$ is
effective, namely a $\left(  -2\right)  $-curve in $Y$. If we blow down
$C_{1}$ in $Y$, we get a surface $X$ with an $A_{1}$ singularity.
$\mathfrak{L}_{0}$ cannot descend to $X$ as $\mathfrak{L}_{0}|_{C_{1}}\cong
O_{\mathbb{P}^{1}}\left(  -1\right)  \oplus O_{\mathbb{P}^{1}}\left(
1\right)  $. Using the Euler sequence $0\rightarrow O_{\mathbb{P}^{1}%
}(-1)\rightarrow O_{\mathbb{P}^{1}}^{\oplus2}\rightarrow O_{\mathbb{P}^{1}%
}(1)\rightarrow0$, we deform $\mathfrak{L}_{0}|_{C_{1}}$ to become trivial and
using $p_{g}=0$ to lift this deformation to $Y$. The resulting bundles
$\mathfrak{L}_{\varphi}$ and $\zeta_{\varphi}^{A_{1}}$ do descend to $X$.
\end{example}

For every $\ ADE$ case with $V$ the standard representation, we have
$\mathfrak{L}_{0}^{(\mathfrak{g},V)}|_{C_{i}}\cong O_{\mathbb{P}^{1}}^{\oplus
m}+(O_{\mathbb{P}^{1}}(1)+O_{\mathbb{P}^{1}}(-1))^{\oplus n}$. For $A_{n}$
cases, our arguments are similar to the above $A_{1}$ case. For $D_{n}$ cases,
further arguments are needed as the pairs of $O_{\mathbb{P}^{1}}(\pm1)$ in
$\mathfrak{L}_{0}^{(D_{n},\mathbb{C}^{2n})}|_{C_{i}}$ are in different
locations comparing with the $A_{n}$ cases, and we also need to check the
holomorphic structure $\overline{\partial}_{\mathfrak{\varphi}}$ on
$\mathfrak{L}_{0}^{(D_{n},\mathbb{C}^{2n})}$ preserves the natural quadratic
form $q$. For the $E_{6}$ (resp. $E_{7}$) case, since the cubic form $c$
(resp. quartic form $t$) is more complicated than the quadratic form $q$ in
$D_{n}$ cases, the calculations are more involved. The $E_{8}$ case is rather
different and we handle it by reductions to $A_{7}$ and $D_{7}$ cases.

The organization of this paper is as follows. Section 2 gives the construction
of $ADE$ Lie algebra bundles over $Y$ directly. In section 3, we review the
definition of minuscule representations and construct all minuscule
representations using $(-1)$-curves in $Y$. Using these, we construct the Lie
algebra bundles and minuscule representation bundles which can descend to $X$
in $A_{n}$, $D_{n}$ and $E_{n}$ ($n\neq8$) cases separately in section 4, 5
and 6. The proofs of the main theorems in this paper are given in section 7.

Notations: for a holomorphic bundle $(E_{0},\overline{\partial}_{0})$, if we
construct a new holomorhic structure $\overline{\partial}_{\varphi}$ on
$E_{0}$, then we denote the resulting bundle as $E_{\varphi}$.

\bigskip

$\mathbf{Acknowledgements.}$ We are grateful to Jiajin Zhang for many useful
comments and discussions. The work of the second author was supported by a
direct allocation grant from the Research Grants Council of the Hong Kong
Special Administrative Region, China (reference No. 2060436).

\section{ $ADE$ Lie algebra bundles}

\subsection{ $ADE$ singularities}

A rational singularity $p$ in a surface $X$ can be described locally as a
quotient singularity $\mathbb{C}^{2}/\Gamma$ with $\Gamma$ a finite subgroup
of $SL(2,\mathbb{C)}$. It is also called a Kleinian singularity or $ADE$
singularity \cite{BPV}. We can write $\mathbb{C}^{2}/\Gamma$ as zeros of a
polynomial $F(X,Y,Z)$ in $\mathbb{C}^{3}$, where $F(X,Y,Z)\ $is $X^{n}+YZ$,
$X^{n+1}+XY^{2}+Z^{2}$, $X^{4}+Y^{3}+Z^{2}$, $X^{3}Y+Y^{3}+Z^{2}$ or
$X^{5}+Y^{3}+Z^{2}$ and the corresponding singularity is called of type
$A_{n}$, $D_{n}$, $E_{6}$, $E_{7}$ or $E_{8}$ respectively. The reason is if
we consider the minimal resolution $Y$ of $X$, then every irreducible
component of the exceptional divisor $C=\sum_{i=1}^{n}C_{i}$ is a smooth
rational curve with normal bundle $O_{\mathbb{P}^{1}}(-2)$, i.e. a
$(-2)$-curve, and the dual graph of the exceptional divisor is an $ADE$ Dynkin
diagram. The corresponding roots in the Dynkin diagrams are labelled as
follows:%
\[
\underset{\text{Figure 1. The root system of }A_{n}}%
{\setlength{\unitlength}{1.0cm}\begin{picture}(5, 2) \put(0, 1){\circle*{.2}} \put(1.2, 1){\circle*{.2}} \put(2.4, 1){\circle*{.2}} \put(3.6, 1){\circle*{.2}} \put(4.8, 1){\circle*{.2}} \put(0,1){\line(1, 0){1.2}} \put(1.5, 1){\circle*{.1}} \put(1.8, 1){\circle*{.1}}\put(2.1, 1){\circle*{.1}} \put(2.4, 1){\line(1, 0){1.2}} \put(3.6, 1){\line(1, 0){1.2}} \put(-0.5, .5){$C_1$} \put(1, .5){$C_2$} \put(2.3, .5){$C_{n-2}$} \put(3.5, .5){$C_{n-1}$} \put(4.6, .5){$C_{n}$} \end{picture}\ \ }%
\]

\[
\underset{\text{Figure 2. The root system of }D_{n}}%
{\setlength{\unitlength}{1.0cm}\begin{picture}(5, 3) \put(0, 1){\circle*{.2}} \put(1.2, 1){\circle*{.2}} \put(2.4,1){\circle*{.2}} \put(3.6,1){\circle*{.2}} \put(4.8,1){\circle*{.2}} \put(3.6,2.2){\circle*{.2}} \put(1.5, 1){\circle*{.1}} \put(1.8, 1){\circle*{.1}} \put(2.1, 1){\circle*{.1}} \put(0, 1){\line(1, 0){1.2}} \put(2.4, 1){\line(1, 0){1.2}} \put(3.6, 1){\line(1, 0){1.2}} \put(3.6, 1){\line(0, 1){1.2}} \put(-0.5, .5){$C_1$} \put(1, .5){$C_2$} \put(2.3, .5){$C_{n-3}$} \put(3.5, .5){$C_{n-2}$} \put(4.5, .5){$C_{n-1}$} \put(3.8, 2.1){$C_{n}$} \end{picture}\ }%
\]

\[
\underset{\text{Figure 3. The root system of }E_{n}}%
{\setlength{\unitlength}{1.0cm}\begin{picture}(6, 3) \put(0, 1){\circle*{.2}} \put(1.2, 1){\circle*{.2}} \put(2.4, 1){\circle*{.2}} \put(3.6, 1){\circle*{.2}} \put(4.8, 1){\circle*{.2}} \put(6, 1){\circle*{.2}} \put(3.6,2.2){\circle*{.2}} \put(1.5, 1){\circle*{.1}} \put(1.8, 1){\circle*{.1}} \put(2.1, 1){\circle*{.1}} \put(0, 1){\line(1, 0){1.2}} \put(2.4, 1){\line(1, 0){1.2}} \put(3.6, 1){\line(1, 0){1.2}} \put(4.8,1){\line(1,0){1.2}} \put(3.6, 1){\line(0, 1){1.2}} \put(-0.5, .5){$C_1$} \put(1.1, .5){$C_2$} \put(2.3, .5){$C_{n-4}$} \put(3.5, .5){$C_{n-3}$} \put(4.7, .5){$C_{n-2}$} \put(5.8, .5){$C_{n-1}$} \put(3.8, 2.1){$C_{n}$} \end{picture}\ \ }%
\]
%%%%%%%%%%%%%%%%%%%%%%%%%%%%%%%%%%%%%%%%%%%%%

There is a natural decomposition%
\[
H^{2}(Y,\mathbb{Z})=H^{2}(X,\mathbb{Z})\oplus\Lambda\text{,}%
\]
where $\Lambda=\{\sum a_{i}[C_{i}]|a_{i}\in\mathbb{Z}\}$. The set
$\Phi:=\{\alpha\in\Lambda|\alpha^{2}=-2\}$ is a simply-laced (i.e. $\ ADE$)
root system of a simple Lie algebra $\mathfrak{g}$ and $\Delta=\{[C_{i}]\}$ is
a base of $\Phi$. For any $\alpha\in\Phi$, there exists a unique divisor
$D=\sum a_{i}C_{i}$ with $\alpha=[D]$, and we define a line bundle
$O(\alpha):=O(D)$ over $Y$.

\subsection{Lie algebra bundles}

We define a Lie algebra bundle of type $\mathfrak{g}$ over $Y$ as follows:

\begin{center}
$\mathcal{E}_{0}^{\mathfrak{g}}:=O^{\oplus n}\oplus\bigoplus_{\alpha\in\Phi
}O(\alpha)$.
\end{center}

For every open chart $U$ of $Y$, we take $x_{\alpha}^{U}$ to be a nonvanishing
holomorphic section of $O_{U}(\alpha)$ and $h_{i}^{U}$ ($i=1,\cdots,n$)
nonvanishing holomorphic sections of $O_{U}^{\oplus n}$. Define a Lie algebra
structure $[,]$ on $\mathcal{E}_{0}^{\mathfrak{g}}$ such that $\{x_{\alpha
}^{U}$'s, $h_{i}^{U}$'s$\}$ is the Chevalley basis \cite{H}, i.e.

(a) $[h_{i}^{U},$ $h_{j}^{U}]=0$, $1\leq i$, $j\leq n$.

(b) $[h_{i}^{U},$ $x_{\alpha}^{U}]=\left\langle \alpha\text{, }C_{i}%
\right\rangle x_{\alpha}^{U}$, $1\leq i\leq n$, $\alpha\in\Phi$.

(c) $[x_{\alpha}^{U},$ $x_{-\alpha}^{U}]=h_{\alpha}^{U}$ is a $\mathbb{Z}%
$-linear combination of $h_{i}^{U}$.

(d) If $\alpha$, $\beta$ are independent roots, and $\beta-r\alpha
,\cdots,\beta+q\alpha$ is the $\alpha$-string through $\beta$, then
$[x_{\alpha}^{U},$ $x_{\beta}^{U}]=0$ if $q=0$, otherwise $[x_{\alpha}^{U},$
$x_{\beta}^{U}]=\pm(r+1)x_{\alpha+\beta}^{U}$.

Since $\mathfrak{g}$ is simply-laced, all its roots have the same length, we
have any $\alpha$-string through $\beta$ is of length at most $2$. So (d) can
be written as $[x_{\alpha}^{U},$ $x_{\beta}^{U}]=n_{\alpha,\beta}%
x_{\alpha+\beta}^{U}$, where $n_{\alpha,\beta}=\pm1$ if $\alpha+\beta\in\Phi$,
otherwise $n_{\alpha,\beta}=0$. From the Jacobi identity, we have for any
$\alpha,\beta,\gamma\in\Phi$, $n_{\alpha,\beta}n_{\alpha+\beta,\gamma
}+n_{\beta,\gamma}n_{\beta+\gamma,\alpha}+n_{\gamma,\alpha}n_{\gamma
+\alpha,\beta}=0$. This Lie algebra structure is compatible with different
trivializations of $\mathcal{E}_{0}^{\mathfrak{g}}$ \cite{LZ}.

By Friedman-Morgan \cite{FM}, a bundle over $Y$ can descend to $X$ if and only
if its restriction to each irreducible component $C_{i}$ of the exceptional
divisor is trivial. But $\mathcal{E}_{0}^{\mathfrak{g}}|_{C_{i}}$ is not
trivial as $O([C_{i}])|_{C_{i}}\cong O_{\mathbb{P}^{1}}(-2)$. We will
construct a new holomorphic structure on $\mathcal{E}_{0}^{\mathfrak{g}}$,
which preserves the Lie algebra structure and therefore the resulting bundle
$\mathcal{E}_{\varphi}^{\mathfrak{g}}$ can descend to $X$.

As we have fixed a base $\Delta$ of $\Phi$, we have a decomposition $\Phi
=\Phi^{+}\cup\Phi^{-}$ into positive and negative roots.

\begin{definition}
Given any $\varphi=(\varphi_{\alpha})_{\alpha\in\Phi^{+}}\in\Omega
^{0,1}(Y,\bigoplus_{\alpha\in\Phi^{+}}O(\alpha))$, we define $\overline
{\partial}_{\varphi}:\Omega^{0,0}(Y,\mathcal{E}_{0}^{\mathfrak{g}%
})\longrightarrow\Omega^{0,1}(Y,\mathcal{E}_{0}^{\mathfrak{g}})$ by
\[
\overline{\partial}_{\varphi}:=\overline{\partial}_{0}+ad(\varphi
):=\overline{\partial}_{0}+\sum_{\alpha\in\Phi^{+}}ad(\varphi_{\alpha
})\text{,}%
\]
where $\overline{\partial}_{0}$ is the standard holomorphic structure of
$\mathcal{E}_{0}^{\mathfrak{g}}$. More explicitly, if we write $\varphi
_{\alpha}=c_{\alpha}^{U}x_{\alpha}^{U}$ locally for some one form $c_{\alpha
}^{U}$, then $ad(\varphi_{\alpha})=c_{\alpha}^{U}ad(x_{\alpha}^{U})$.
\end{definition}

\begin{proposition}
\label{Lie bracket}$\overline{\partial}_{\varphi}$ is compatible with the Lie
algebra structure, i.e. $\overline{\partial}_{\mathcal{\varphi}}[,]=0$.

\begin{proof}
This follows directly from the Jacobi identity.
\end{proof}
\end{proposition}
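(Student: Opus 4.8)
The goal is to show that $\overline{\partial}_{\varphi}$ is a derivation of the Lie bracket, i.e. that for any local holomorphic sections $s,t$ of $\mathcal{E}_{0}^{\mathfrak{g}}$ we have $\overline{\partial}_{\varphi}[s,t]=[\overline{\partial}_{\varphi}s,t]+[s,\overline{\partial}_{\varphi}t]$. Since $\overline{\partial}_{0}$ is already a derivation for the Lie bracket (this is exactly the statement, recalled from \cite{LZ}, that the Chevalley-basis bracket is compatible with the transition functions, so that $[,]$ is a holomorphic bundle map $\mathcal{E}_{0}^{\mathfrak{g}}\otimes\mathcal{E}_{0}^{\mathfrak{g}}\to\mathcal{E}_{0}^{\mathfrak{g}}$), it suffices to check that the zeroth-order part $ad(\varphi)=\sum_{\alpha\in\Phi^{+}}ad(\varphi_{\alpha})$ acts as a derivation of $[,]$ with values in $\Omega^{0,1}$. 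In other words, I must verify $ad(\varphi)[s,t]=[ad(\varphi)s,t]+[s,ad(\varphi)t]$, and this is pointwise on $Y$ a purely Lie-algebraic identity.

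The plan is therefore to reduce everything, over a fixed open chart $U$, to the Jacobi identity in $\mathfrak{g}$. Writing $\varphi_{\alpha}=c_{\alpha}^{U}x_{\alpha}^{U}$ with $c_{\alpha}^{U}\in\Omega^{0,1}(U)$, we have $ad(\varphi_{\alpha})=c_{\alpha}^{U}\,ad(x_{\alpha}^{U})$, where the one-form part $c_{\alpha}^{U}$ simply multiplies through and carries no algebraic content (the wedge of a $(0,1)$-form with itself issue does not arise since each summand contributes a single $c_{\alpha}^{U}$). So the identity to prove collapses to: for each $\alpha\in\Phi^{+}$ and each pair of basis elements $e,e'$ among the $x_{\beta}^{U}$'s and $h_{i}^{U}$'s,
\[
ad(x_{\alpha}^{U})[e,e']=[ad(x_{\alpha}^{U})e,\,e']+[e,\,ad(x_{\alpha}^{U})e'],
\]
which is precisely the Jacobi identity $[x_{\alpha}^{U},[e,e']]=[[x_{\alpha}^{U},e],e']+[e,[x_{\alpha}^{U},e']]$ in the Lie algebra $\mathfrak{g}$ (equivalently, the fact that $ad(x_{\alpha}^{U})$ is a derivation). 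Summing over $\alpha\in\Phi^{+}$ and extending bilinearly over $\Omega^{0,0}(U)$-linear combinations gives the claim on $U$; since the construction of $[,]$ and of $\overline{\partial}_{\varphi}$ are both compatible with changes of trivialization, the local identities patch to the global statement $\overline{\partial}_{\varphi}[,]=0$.

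There is essentially no obstacle here: the only point requiring a word of care is bookkeeping of the $(0,1)$-form coefficients — one should note that $ad(\varphi)$ is an $\Omega^{0,1}$-valued bundle endomorphism and that the Leibniz/derivation property for $\overline{\partial}_{\varphi}$ combines the ordinary Leibniz rule for $\overline{\partial}_{0}$ with the $\mathfrak{g}$-Jacobi identity for the $ad(\varphi)$ term, with signs working out because only first-order terms in the forms appear. I would state this in one or two lines, since, as the authors indicate, it "follows directly from the Jacobi identity."
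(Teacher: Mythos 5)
Your argument is correct and is exactly the paper's intended one: the compatibility reduces, after splitting $\overline{\partial}_{\varphi}=\overline{\partial}_{0}+ad(\varphi)$ and noting the bracket is holomorphic for $\overline{\partial}_{0}$, to the statement that each $ad(x_{\alpha}^{U})$ is a derivation of $[,]$, i.e. the Jacobi identity. The paper's proof is just the one-line remark that this follows from the Jacobi identity, so your write-up is a faithful (and more detailed) version of the same approach.
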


For $\overline{\partial}_{\mathcal{\varphi}}$ to define a holomorphic
structure, we need
\[
0=\overline{\partial}_{\mathcal{\varphi}}^{2}=\sum_{\alpha\in\Phi^{+}%
}(\overline{\partial}_{0}c_{\alpha}^{U}+\sum_{\beta+\gamma=\alpha}%
(n_{\beta,\gamma}c_{\beta}^{U}c_{\gamma}^{U}))ad(x_{\alpha}^{U})\text{,}%
\]
that is $\overline{\partial}_{0}\varphi_{\alpha}+\sum_{\beta+\gamma=\alpha
}(n_{\beta,\gamma}\varphi_{\beta}\varphi_{\gamma})=0$ for any $\alpha\in
\Phi^{+}$. Explicitly:%
\[
\left\{
\begin{tabular}
[c]{cc}%
$\overline{\partial}_{0}\varphi_{C_{i}}=0$ & $i=1,2\cdots,n$\\
$\overline{\partial}_{0}\varphi_{C_{i}+C_{j}}=n_{C_{i},C_{j}}\varphi_{C_{i}%
}\varphi_{C_{j}}$ & $\text{ if }C_{i}+C_{j}\in\Phi^{+}$\\
$\vdots$ &
\end{tabular}
\ \right.
\]

\begin{proposition}
\label{holomorphic}Given any $(\varphi_{C_{i}})_{i=1}^{n}\in\Omega^{0,1}(Y,$
$\bigoplus_{i=1}^{n}O(C_{i}))$ with $\overline{\partial}_{0}\varphi_{C_{i}}=0$
for every $i$, it can be extended to $\varphi=(\varphi_{\alpha})_{\alpha
\in\Phi^{+}}\in\Omega^{0,1}(Y,\bigoplus_{\alpha\in\Phi^{+}}O(\alpha))$ such
that $\overline{\partial}_{\mathcal{\varphi}}^{2}=0$. Namely we have a
holomorphic vector bundle $\mathcal{E}_{\varphi}^{\mathfrak{g}}$ over $Y$.
\end{proposition}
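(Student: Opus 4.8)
The plan is to reduce the statement to solving, recursively, the system displayed just above the Proposition,
\[
\overline{\partial}_{0}\varphi_{\alpha}=-\sum_{\beta+\gamma=\alpha}n_{\beta,\gamma}\varphi_{\beta}\varphi_{\gamma}\qquad(\alpha\in\Phi^{+}),
\]
by induction on the height $\mathrm{ht}(\alpha):=\sum_{i}a_{i}$ of a positive root $\alpha=\sum_{i}a_{i}[C_{i}]$. Here the sum runs over ordered pairs of positive roots with $\beta+\gamma=\alpha$, and $\varphi_{\beta}\varphi_{\gamma}\in\Omega^{0,2}(Y,O(\alpha))$ is obtained by wedging the $(0,1)$-forms together with the canonical isomorphism $O(\beta)\otimes O(\gamma)\cong O(\beta+\gamma)=O(\alpha)$ (this is the identification under which $x_{\beta}^{U}x_{\gamma}^{U}$ generates $O_{U}(\alpha)$). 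Since $\Phi^{+}$ is finite there is a maximal height, so the recursion terminates. The base case $\mathrm{ht}(\alpha)=1$ is $\alpha=[C_{i}]$: here no decomposition into positive roots exists, so the equation is exactly the given hypothesis $\overline{\partial}_{0}\varphi_{C_{i}}=0$, and we take the prescribed $\varphi_{C_{i}}$.

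For the inductive step, assume $\varphi_{\beta}\in\Omega^{0,1}(Y,O(\beta))$ has been chosen for every $\beta\in\Phi^{+}$ with $\mathrm{ht}(\beta)<h$ so that the corresponding equations hold, and fix $\alpha\in\Phi^{+}$ with $\mathrm{ht}(\alpha)=h$. Any decomposition $\alpha=\beta+\gamma$ into positive roots has $\mathrm{ht}(\beta),\mathrm{ht}(\gamma)<h$, so the form $\psi_{\alpha}:=-\sum_{\beta+\gamma=\alpha}n_{\beta,\gamma}\varphi_{\beta}\varphi_{\gamma}\in\Omega^{0,2}(Y,O(\alpha))$ is already determined. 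Because $Y$ is a surface, every $(0,2)$-form is automatically $\overline{\partial}_{0}$-closed, so the existence of $\varphi_{\alpha}\in\Omega^{0,1}(Y,O(\alpha))$ with $\overline{\partial}_{0}\varphi_{\alpha}=\psi_{\alpha}$ is obstructed only by the Dolbeault class $[\psi_{\alpha}]\in H^{2}(Y,O(\alpha))$. The key point is that this group vanishes: writing $\alpha=[D]$ with $D=\sum a_{i}C_{i}\geq0$ and nonzero, Serre duality gives $H^{2}(Y,O(\alpha))\cong H^{0}(Y,K_{Y}-D)^{\vee}$, and multiplication by the section of $O(D)$ cutting out $D$ embeds $H^{0}(Y,K_{Y}-D)$ into $H^{0}(Y,K_{Y})$; since the minimal resolution of an $ADE$ singularity is crepant we have $K_{Y}=\pi^{*}K_{X}$, hence $h^{0}(Y,K_{Y})=p_{g}(X)=0$. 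So $[\psi_{\alpha}]=0$, a solution $\varphi_{\alpha}$ exists, and the induction goes through, producing $\varphi=(\varphi_{\alpha})_{\alpha\in\Phi^{+}}$ with $\overline{\partial}_{\varphi}^{2}=0$. By the Koszul--Malgrange integrability theorem, $\overline{\partial}_{\varphi}$ is then the Dolbeault operator of a holomorphic structure on $\mathcal{E}_{0}^{\mathfrak{g}}$, which we denote $\mathcal{E}_{\varphi}^{\mathfrak{g}}$.

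The only substantive input is the cohomology vanishing $H^{2}(Y,O(\alpha))=0$ for $\alpha\in\Phi^{+}$, which is precisely the place where the hypothesis $p_{g}(X)=0$ is used; everything else is the formal structure of the height recursion on $\Phi^{+}$. The recursion is automatically well posed — there is exactly one equation per positive root $\alpha$, its right-hand side depends only on roots of strictly smaller height, and the ambiguity in each choice of $\varphi_{\alpha}$ is immaterial because the obstruction group vanishes at every stage — so I expect no genuine difficulty beyond organizing the induction and invoking this vanishing.
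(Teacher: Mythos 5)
Your proposal is correct, and its skeleton is exactly the paper's: induct on $ht(\alpha)$, note that the right-hand side $\psi_{\alpha}=-\sum_{\beta+\gamma=\alpha}n_{\beta,\gamma}\varphi_{\beta}\varphi_{\gamma}$ is already determined by lower-height data, and kill the obstruction in $H^{2}(Y,O(\alpha))$. The two places where you diverge are worth comparing. First, you dismiss the $\overline{\partial}_{0}$-closedness of $\psi_{\alpha}$ on dimension grounds (no $(0,3)$-forms on a surface), whereas the paper checks it via the structure-constant identity $n_{\delta,\lambda}n_{\delta+\lambda,\mu}+n_{\lambda,\mu}n_{\lambda+\mu,\delta}+n_{\mu,\delta}n_{\mu+\delta,\lambda}=0$ coming from the Jacobi identity; on a surface your shortcut is perfectly legitimate and loses nothing (the Jacobi computation would only become essential in higher dimension, where closedness is needed for the class to be defined). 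Second, and more substantively, the key vanishing $H^{2}(Y,O(\alpha))=0$ for $\alpha\in\Phi^{+}$ is the paper's Lemma \ref{cohomology}, proved by a second height induction: Lemma A of $\S 10.2$ of Humphreys produces $C_{i}$ with $\alpha\cdot C_{i}=-1$, and the restriction sequence $0\rightarrow O_{Y}(\alpha-C_{i})\rightarrow O_{Y}(\alpha)\rightarrow O_{C_{i}}(\alpha)\rightarrow 0$ together with $O_{C_{i}}(\alpha)\cong O_{\mathbb{P}^{1}}(-1)$ and $p_{g}=0$ does the rest. You instead get the vanishing in one stroke from Serre duality plus the effectivity of $D=\sum a_{i}C_{i}$ (multiplication by its canonical section embeds $H^{0}(Y,K_{Y}-D)$ into $H^{0}(Y,K_{Y})$), reducing everything to $h^{0}(Y,K_{Y})=p_{g}(Y)=0$, which you obtain from crepancy $K_{Y}=\pi^{*}K_{X}$ and $p_{g}(X)=0$. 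Your route is shorter, avoids the root-system combinatorics entirely, and proves the stronger statement that $H^{2}(Y,O(D))=0$ for every nonzero effective divisor supported anywhere on $Y$; the cost is invoking Serre duality and the Gorenstein/crepant property of $ADE$ singularities to pass from $p_{g}(X)$ to $p_{g}(Y)$ (one could equally just say $p_{g}(Y)=p_{g}(X)=0$ for rational singularities, which is how the paper itself uses the hypothesis, e.g. in Lemma \ref{surj}). The paper's more pedestrian argument has the side benefit of running parallel to the later Lemma \ref{cohomology2}, where the divisor classes $l-l^{\prime}$ are no longer effective and a similar induction (combined there with Serre duality) is needed.
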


To prove this proposition, we need the following lemma. For any $\alpha
=\sum_{i=1}^{n}a_{i}C_{i}\in\Phi^{+}$, we define $ht(\alpha):=\sum_{i=1}%
^{n}a_{i}$.

\begin{lemma}
\label{cohomology}For any $\alpha\in\Phi^{+}$, $H^{2}(Y,O(\alpha))=0$.

\begin{proof}
If $ht(\alpha)=1$, i.e. $\alpha=C_{i}$, $H^{2}(Y,O(C_{i}))=0$ follows from the
long exact sequence associated to $0\rightarrow O_{Y}\rightarrow O_{Y}%
(C_{i})\rightarrow O_{C_{i}}(C_{i})\rightarrow0$ and $p_{g}=0$.

By induction, suppose the lemma is true for every $\beta$ with $ht(\beta)=m$.
Given any $\alpha$ with $ht(\alpha)=m+1$, by Lemma A in $\S 10.2$ of \cite{H},
there exists some $C_{i}$ such that $\alpha\cdot C_{i}=-1$, i.e.
$\beta:=\alpha-C_{i}\in\Phi^{+}$ with $ht(\beta)=m$. Using the long exact
sequence associated to $0\rightarrow O_{Y}(\beta)\rightarrow O_{Y}%
(\alpha)\rightarrow O_{C_{i}}(\alpha)\rightarrow0$, $O_{C_{i}}(\alpha)\cong
O_{\mathbb{P}^{1}}(-1)$ and $H^{2}(Y,O(\beta))=0\ $by induction, we have
$H^{2}(Y,O(\alpha))=0$.
\end{proof}
\end{lemma}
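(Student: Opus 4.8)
The plan is to prove the vanishing by induction on the height $ht(\alpha)=\sum_{i}a_{i}$ of the positive root $\alpha=\sum_{i}a_{i}C_{i}$, peeling off one simple root at a time. The whole argument rests on two elementary inputs: that $H^{2}$ of any coherent sheaf supported on a curve vanishes (because each $C_{i}\cong\mathbb{P}^{1}$ is one-dimensional), and that $H^{2}(Y,O_{Y})=0$ since $p_{g}=0$. Given these, every short exact sequence I use will sandwich the unknown middle $H^{2}$ between two groups already known to vanish, so the result propagates automatically up the height filtration.

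For the base case $ht(\alpha)=1$, i.e. $\alpha=C_{i}$, I would use the structure sequence $0\rightarrow O_{Y}\rightarrow O_{Y}(C_{i})\rightarrow O_{C_{i}}(C_{i})\rightarrow0$. The associated long exact sequence places $H^{2}(Y,O_{Y}(C_{i}))$ between $H^{2}(Y,O_{Y})$ and $H^{2}(C_{i},O_{C_{i}}(C_{i}))$. The left term vanishes because $p_{g}=0$, and the right term vanishes because $C_{i}$ is a curve (here $O_{C_{i}}(C_{i})\cong O_{\mathbb{P}^{1}}(-2)$, but its degree plays no role at this step). Hence $H^{2}(Y,O_{Y}(C_{i}))=0$.

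For the inductive step, assume the result for all positive roots of height $m$ and take $\alpha$ with $ht(\alpha)=m+1$. The key input is the Lie-theoretic fact (Lemma A, \S10.2 of \cite{H}) that a non-simple positive root admits a simple root $C_{i}$ which, under the identification of the intersection form on $\Lambda$ with the negative of the Cartan matrix, satisfies $\alpha\cdot C_{i}=-1$; consequently $\beta:=\alpha-C_{i}$ is again a positive root, of height $m$. Tensoring the structure sequence of $C_{i}$ with $O_{Y}(\beta)$ yields $0\rightarrow O_{Y}(\beta)\rightarrow O_{Y}(\alpha)\rightarrow O_{C_{i}}(\alpha)\rightarrow0$, where $O_{C_{i}}(\alpha)$ has degree $\alpha\cdot C_{i}=-1$, i.e. $O_{C_{i}}(\alpha)\cong O_{\mathbb{P}^{1}}(-1)$. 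In the long exact sequence, $H^{2}(Y,O_{Y}(\beta))=0$ by the induction hypothesis and $H^{2}(C_{i},O_{C_{i}}(\alpha))=0$ since $C_{i}$ is a curve, forcing $H^{2}(Y,O_{Y}(\alpha))=0$ and closing the induction.

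There is no serious analytic obstacle; the content is entirely formal once the two vanishing inputs are in hand. The one point requiring care is the bookkeeping in the inductive step: one must correctly translate the root-theoretic pairing into the geometric intersection pairing (they differ by a sign, so $(\alpha,\alpha_{i})>0$ becomes $\alpha\cdot C_{i}<0$), verify that the resulting $\beta=\alpha-C_{i}$ is genuinely a positive (hence effective) root so that $O_{Y}(\beta)$ and the inclusion $O_{Y}(\beta)\hookrightarrow O_{Y}(\alpha)$ make sense, and confirm the degree computation $\deg O_{C_{i}}(\alpha)=\alpha\cdot C_{i}=-1$ that identifies the quotient sheaf. I expect the existence of the height-lowering simple root to be the only place where a nontrivial (but standard) external result is invoked.
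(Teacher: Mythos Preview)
Your proof is correct and follows essentially the same approach as the paper: induction on height, the base case via the structure sequence of $C_{i}$ and $p_{g}=0$, and the inductive step via Lemma~A of \cite{H} to find a simple root $C_{i}$ with $\alpha\cdot C_{i}=-1$, then the short exact sequence $0\to O_{Y}(\beta)\to O_{Y}(\alpha)\to O_{C_{i}}(\alpha)\to 0$ with $O_{C_{i}}(\alpha)\cong O_{\mathbb{P}^{1}}(-1)$. Your additional remarks on the sign convention and on why $H^{2}$ of a sheaf on a curve vanishes are helpful elaborations but do not change the argument.
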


\begin{proof}
$($of Proposition \ref{holomorphic}$)$\textit{\ We solve the equations
}$\overline{\partial}_{0}\varphi_{\alpha}=\sum_{\beta+\gamma=\alpha}%
n_{\beta,\gamma}\varphi_{\beta}\varphi_{\gamma}$\textit{\ for }$\varphi
_{\alpha}\in\Omega^{0,1}(Y,$\textit{\ }$O(\alpha))$\textit{\ inductively on
}$ht(\alpha)$\textit{.}

\textit{For }$ht(\alpha)=2\,$\textit{, i.e. }$\alpha=C_{i}+C_{j}%
$\textit{\ with }$C_{i}\cdot C_{j}=1$\textit{, since }$[\varphi_{C_{i}}%
\varphi_{C_{j}}]\in H^{2}(Y,O(C_{i}+C_{j}))=0$\textit{, we can find }%
$\varphi_{C_{i}+C_{j}}$\textit{\ satisfying }$\overline{\partial}_{0}%
\varphi_{C_{i}+C_{j}}=\pm\varphi_{C_{i}}\varphi_{C_{j}}$\textit{.}

\textit{Suppose we have solved the equations for all }$\varphi_{\beta}%
$'s\textit{\ with }$ht(\beta)\leq m$\textit{. For}%
\[
\overline{\partial}_{0}\varphi_{\alpha}=\sum_{\beta+\gamma=\alpha}%
n_{\beta,\gamma}\varphi_{\beta}\varphi_{\gamma}%
\]
\textit{ with }$ht(\alpha)=m+1$\textit{, we have }$ht(\beta),ht(\gamma)\leq
m$\textit{. Using }$\overline{\partial}_{0}(\sum_{\beta+\gamma=\alpha}%
n_{\beta,\gamma}\varphi_{\beta}\varphi_{\gamma})=\sum_{\delta+\lambda
+\mu=\alpha}(n_{\delta,\lambda}n_{\delta+\lambda,\mu}+n_{\lambda,\mu
}n_{\lambda+\mu,\delta}+n_{\mu,\delta}n_{\mu+\delta,\lambda})\varphi_{\delta
}\varphi_{\lambda}\varphi_{\mu}=0$, $[\sum_{\beta+\gamma=\alpha}%
n_{\beta,\gamma}\varphi_{\beta}\varphi_{\gamma}]\in H^{2}(Y,O(\alpha
))=0$\textit{, we can solve for }$\varphi_{\alpha}$\textit{.}
\end{proof}

\bigskip

Denote
\[
\Psi_{Y}\triangleq\{\varphi=(\varphi_{\alpha})_{\alpha\in\Phi^{+}}\in
\Omega^{0,1}(Y,\bigoplus_{\alpha\in\Phi^{+}}O(\alpha))|\overline{\partial
}_{\mathcal{\varphi}}^{2}=0\}\text{,}%
\]
and
\[
\Psi_{X}\triangleq\{\varphi\in\Psi_{Y}|[\varphi_{C_{i}}|_{C_{i}}]\neq0\text{
for }i=1,2,\cdots,n\}\text{.}%
\]

\begin{theorem}
\label{thm1}$\mathcal{E}_{\varphi}^{\mathfrak{g}}$ is trivial on $C_{i}$ if
and only if $[\varphi_{C_{i}}|_{C_{i}}]\neq0\in H^{1}(Y,O_{C_{i}}(C_{i}))$.

\begin{proof}
We will discuss the $ADE$ cases separately in $\S 4$, $\S 5$, $\S 6$ and the
proof will be completed in $\S 7$.
\end{proof}
\end{theorem}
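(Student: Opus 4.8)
The plan is to reduce the global statement to a local computation on each $C_i \cong \mathbb{P}^1$ by restricting the holomorphic bundle $\mathcal{E}_\varphi^{\mathfrak{g}}$ to $C_i$ and analyzing when the restricted $\overline{\partial}$-operator is gauge-equivalent to the trivial one. The first step is to observe that $\mathcal{E}_0^{\mathfrak{g}}|_{C_i}$ splits, according to the value $\langle \alpha, C_i\rangle = \alpha \cdot C_i \in \{-2,-1,0,1,2\}$, as $O_{\mathbb{P}^1}^{\oplus n} \oplus \bigoplus_{\alpha} O_{\mathbb{P}^1}(\alpha\cdot C_i)$, and that only the roots $\alpha$ with $\alpha \cdot C_i \neq 0$ matter for nontriviality. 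Since $\mathfrak{g}$ is simply-laced, $\alpha \cdot C_i = -2$ forces $\alpha = -C_i$, $\alpha\cdot C_i = 2$ forces $\alpha = C_i$, and the remaining contributions come in pairs $O_{\mathbb{P}^1}(1) \oplus O_{\mathbb{P}^1}(-1)$ indexed by roots $\alpha$ with $\alpha\cdot C_i = 1$ (paired with $\alpha - C_i$). So on $C_i$ the bundle looks like $O_{\mathbb{P}^1}(-2)\oplus O_{\mathbb{P}^1}(2) \oplus (O_{\mathbb{P}^1}(1)\oplus O_{\mathbb{P}^1}(-1))^{\oplus k} \oplus O_{\mathbb{P}^1}^{\oplus m}$ for suitable $k,m$ depending on $\mathfrak{g}$ and $i$.

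The key mechanism, as the $A_1$ example in the introduction indicates, is the Euler-type sequence: the class $[\varphi_{C_i}|_{C_i}] \in H^1(C_i, O_{C_i}(C_i)) \cong \mathbb{C}$ is precisely the extension class that, when nonzero, glues $O_{\mathbb{P}^1}(-1) \oplus O_{\mathbb{P}^1}(1)$ into $O_{\mathbb{P}^1}^{\oplus 2}$ (and, through $ad$, promotes $O_{\mathbb{P}^1}(-2)\oplus (\text{middle}) \oplus O_{\mathbb{P}^1}(2)$ toward a trivial summand). The next step is therefore to identify, for each simple root $C_i$, which off-diagonal blocks of $ad(\varphi_{C_i}|_{C_i})$ are the relevant ones: these are $ad(x_{C_i})$ acting between the weight spaces for $\alpha$ and $\alpha + C_i$, and the structure constants $n_{\alpha, C_i} = \pm 1$ guarantee these blocks are nonzero maps $O_{\mathbb{P}^1}(-1) \to O_{\mathbb{P}^1}(1)$ (i.e., multiplication by the degree-$2$ section whose cohomology class is $[\varphi_{C_i}|_{C_i}]$). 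When $[\varphi_{C_i}|_{C_i}] \neq 0$ one checks, case by case along the $\mathfrak{sl}_2$-strings through the root system, that all the negative-degree summands get absorbed and $\mathcal{E}_\varphi^{\mathfrak{g}}|_{C_i}$ becomes globally generated of the right rank with vanishing $H^1$, hence trivial; conversely if $[\varphi_{C_i}|_{C_i}] = 0$, then the operator $\overline{\partial}_\varphi|_{C_i}$ is strictly block-upper-triangular with a genuine $O_{\mathbb{P}^1}(-2)$ (or $O_{\mathbb{P}^1}(-1)$) subquotient surviving, so $h^0$ is too small and the restriction cannot be trivial. The converse direction is cleanest: $[\varphi_{C_i}|_{C_i}] = 0$ means $\varphi_{C_i}|_{C_i} = \overline{\partial}_0 s$ for some section $s$, and a gauge transformation by $\exp(ad\, s)$ removes the $\varphi_{C_i}$ term on $C_i$ entirely while only modifying higher-height terms — but those act nilpotently up the grading and cannot change the splitting type's failure to be trivial, since $O(C_i)|_{C_i}$ remains $O_{\mathbb{P}^1}(-2)$.

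The main obstacle — and the reason the authors defer the full argument to $\S\S 4$–$7$ — is that the "all negative summands get absorbed" claim is genuinely representation-dependent: one must trace through the $ad$-action on the full root system, and for $D_n$, $E_6$, $E_7$ the pattern of which $O_{\mathbb{P}^1}(\pm1)$ pairs are linked by $ad(x_{C_i})$ is intricate (and for $E_8$ no faithful minuscule representation exists, forcing the reduction to $A_7$ and $D_7$ mentioned in the outline). I would organize this as: (1) prove the converse direction uniformly by the gauge argument above; (2) for the forward direction, pass to the associated minuscule (or standard) representation bundle $\mathfrak{L}_\varphi^{(\mathfrak{g},V)}$ where the $ad$-action becomes the ordinary action $\rho$ and the $O_{\mathbb{P}^1}(\pm 1)$ pairing is governed by the simpler weight combinatorics of $V$, show triviality of $\mathfrak{L}_\varphi|_{C_i}$ there, and then recover triviality of $\mathcal{E}_\varphi^{\mathfrak{g}}|_{C_i} = aut_0(\mathfrak{L}_\varphi^{(\mathfrak{g},V)}, f)|_{C_i}$ since the automorphism bundle of a trivial bundle preserving a form is trivial. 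This matches the paper's stated plan of handling $A_n$, $D_n$, $E_n$ ($n\neq 8$) in $\S\S 4$–$6$ and assembling everything, including $E_8$, in $\S 7$.
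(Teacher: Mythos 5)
Your plan for the forward direction coincides with the paper's actual route: for $\mathfrak{g}\neq E_{8}$ one works with the standard representation bundle $\mathfrak{L}_{\varphi}^{(\mathfrak{g},V)}$, proves its restriction to $C_{i}$ is trivial exactly when $[\varphi_{C_{i}}|_{C_{i}}]\neq0$ (this is the real content of \S 4--\S 6, done by explicitly exhibiting rank-many holomorphic sections independent at every point of $C_{i}$, Lemmas \ref{section}, \ref{indep}, \ref{indep2}, \ref{indep3}, \ref{indep4}), identifies $\mathcal{E}_{\varphi}^{\mathfrak{g}}\cong aut_{0}(\mathfrak{L}_{\varphi}^{(\mathfrak{g},V)},f)$ as in Theorem \ref{thm3} (note you also need $\overline{\partial}_{\mathfrak{\eta}}f=0$ so that $f$ restricts to a constant form on the trivialized $\mathfrak{L}_{\varphi}|_{C_{i}}$ before concluding the automorphism bundle is trivial), and treats $E_{8}$ by the $A_{7}$/$D_{7}$ reductions. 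So structurally you are on the paper's track, though you defer rather than supply the case-by-case work (the choice of $\eta$ compatible with $q$, $c$, $t$, and the identification of the nonzero entries of $\eta$ with $\pm\varphi_{\alpha}$). A small slip: for a root $\alpha$ with $\alpha\cdot C_{i}=1$ the partner giving the $O_{\mathbb{P}^{1}}(-1)$ summand is $\alpha+C_{i}$, not $\alpha-C_{i}$ (which in the simply-laced case is usually not even a root).

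The genuine gap is your converse direction. After gauging away $\varphi_{C_{i}}|_{C_{i}}=\overline{\partial}_{0}s$ by $\exp(ad\,s)$, you assert that the remaining strictly triangular terms ``act nilpotently up the grading and cannot change the splitting type,'' and elsewhere that a surviving $O_{\mathbb{P}^{1}}(-2)$ or $O_{\mathbb{P}^{1}}(-1)$ subquotient makes ``$h^{0}$ too small.'' Neither assertion is a proof, and both are false as general principles: nilpotent upper-triangular perturbations are precisely what turn $O_{\mathbb{P}^{1}}(1)\oplus O_{\mathbb{P}^{1}}(-1)$ into the trivial bundle in the forward direction, so they certainly can change splitting types; and the presence of a negative graded piece does not force $h^{0}$ below the rank (in the paper's model case the graded pieces $O_{\mathbb{P}^{1}},O_{\mathbb{P}^{1}}(-1),O_{\mathbb{P}^{1}}(1)$ already give only the bound $h^{0}\leq3$, which is the trivial value). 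What is actually needed, and what the paper does in Lemmas \ref{indep3} and \ref{indep4}, is an extension argument tied to the specific filtered shape: when the relevant class vanishes, the corresponding subquotient splits as $O_{\mathbb{P}^{1}}(1)\oplus O_{\mathbb{P}^{1}}(-1)$, exhibiting the restriction as an extension of $O_{\mathbb{P}^{1}}(1)\oplus O_{\mathbb{P}^{1}}(-1)$ by a trivial bundle; since every map $O_{\mathbb{P}^{1}}\rightarrow O_{\mathbb{P}^{1}}(-1)$ vanishes, a trivial bundle admits no surjection onto $O_{\mathbb{P}^{1}}(-1)$, so such an extension can never be trivial. Moreover this argument is carried out for $\mathfrak{L}_{\varphi}$; to get the converse for $\mathcal{E}_{\varphi}^{\mathfrak{g}}$ itself you must either run it on a suitable holomorphic subquotient of $\mathcal{E}_{\varphi}^{\mathfrak{g}}|_{C_{i}}$ or transfer it through the identification of holomorphic structures in \S 7 --- your gauge reduction is a reasonable first step, but the conclusion you draw from it is unsupported as written.
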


The next lemma says that given any $C_{i}$, there always exists $\varphi
_{C_{i}}\in\Omega^{0,1}(Y,$ $O(C_{i}))$ such that $0\neq\lbrack\varphi_{C_{i}%
}|_{C_{i}}]\in H^{1}(Y,$ $O_{C_{i}}(C_{i}))\cong\mathbb{C}$.

\begin{lemma}
\label{surj}For any $C_{i}$ in $Y$, the restriction homomorphism $H^{1}(Y,$
$O_{Y}(C_{i}))\rightarrow H^{1}(Y,$ $O_{C_{i}}(C_{i}))$ is surjective.

\begin{proof}
The above restriction homomorphism is part of a long exact sequence induced by
$0\rightarrow O_{Y}\rightarrow O_{Y}(C_{i})\rightarrow O_{C_{i}}%
(C_{i})\rightarrow0$. The lemma follows directly from $\mathit{p}_{g}(Y)=0$.
\end{proof}
\end{lemma}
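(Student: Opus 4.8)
The plan is to extract the restriction map from the long exact cohomology sequence attached to the structure sequence of $C_{i}$ twisted by $O_{Y}(C_{i})$, and then kill the obstruction group using $p_{g}(Y)=0$.

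\textbf{Step 1.} Start from the short exact sequence of sheaves on $Y$
\[
0\rightarrow O_{Y}\rightarrow O_{Y}(C_{i})\rightarrow O_{C_{i}}(C_{i})\rightarrow 0,
\]
where the first map is multiplication by the canonical section cutting out $C_{i}$ and the last sheaf is the restriction $O_{Y}(C_{i})|_{C_{i}}$. Passing to cohomology gives the exact piece
\[
H^{1}(Y,O_{Y}(C_{i}))\xrightarrow{\ \mathrm{res}\ } H^{1}(C_{i},O_{C_{i}}(C_{i}))\xrightarrow{\ \delta\ } H^{2}(Y,O_{Y}),
\]
so $\mathrm{res}$ is surjective as soon as $H^{2}(Y,O_{Y})=0$.

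\textbf{Step 2.} Identify $H^{2}(Y,O_{Y})$. Since $Y$ is a smooth projective surface, $h^{2}(O_{Y})=p_{g}(Y)$ by Serre duality. Because $X$ has only a rational singularity and $\pi:Y\rightarrow X$ is its minimal resolution, $R^{j}\pi_{*}O_{Y}=0$ for $j>0$, hence $H^{j}(Y,O_{Y})\cong H^{j}(X,O_{X})$ for all $j$ and in particular $p_{g}(Y)=p_{g}(X)=0$. Therefore $H^{2}(Y,O_{Y})=0$, the connecting map $\delta$ is zero, and $\mathrm{res}$ is surjective. (One may also record that $C_{i}\cong\mathbb{P}^{1}$ with $O_{C_{i}}(C_{i})\cong O_{\mathbb{P}^{1}}(-2)$, so the target is $H^{1}(\mathbb{P}^{1},O(-2))\cong\mathbb{C}$, matching the remark preceding the lemma, though this identification is not needed for surjectivity.)

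There is essentially no obstacle here: the only point requiring a word of justification is the vanishing $H^{2}(Y,O_{Y})=0$, which is exactly where the hypothesis $p_{g}(X)=0$ together with rationality of the singularity enters. Everything else is the standard long exact sequence.
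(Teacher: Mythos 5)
Your proof is correct and follows essentially the same route as the paper: the long exact sequence of $0\rightarrow O_{Y}\rightarrow O_{Y}(C_{i})\rightarrow O_{C_{i}}(C_{i})\rightarrow 0$ together with $H^{2}(Y,O_{Y})=0$ from $p_{g}(Y)=0$. Your Step 2, justifying $p_{g}(Y)=p_{g}(X)=0$ via $R^{j}\pi_{*}O_{Y}=0$ for the rational singularity, is a welcome extra detail that the paper leaves implicit.
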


\section{Minuscule representations and $(-1)$-curves}

\subsection{Standard representations}

For$\ ADE$ Lie algebras, $A_{n}=sl\left(  n+1\right)  $ is the space of
tracefree endomorphisms of ${\mathbb{C}}^{n+1}$ and $D_{n}=o\left(  2n\right)
$ is the space of infinitesimal automorphisms of ${\mathbb{C}}^{2n}$ which
preserve a non-degenerate quadratic form $q$ on ${\mathbb{C}}^{2n}$. In fact,
$E_{6}$ (resp. $E_{7}$) is the space of infinitesimal automorphisms of
${\mathbb{C}}^{27}$ (resp. ${\mathbb{C}}^{56}$) which preserve a particular
cubic form $c$ on ${\mathbb{C}}^{27}$ (resp. quartic form $t$ on ${\mathbb{C}%
}^{56}$) \cite{A}. We call the above representation the \textit{standard
representation} of $\mathfrak{g}$, i.e.%
\[%
\begin{tabular}
[c]{|c|c|}\hline
$\mathfrak{g}$ & standard representation\\\hline
$A_{n}=sl\left(  n+1\right)  $ & ${\mathbb{C}}^{n+1}$\\\hline
$D_{n}=o\left(  2n\right)  $ & ${\mathbb{C}}^{2n}$\\\hline
$E_{6}$ & ${\mathbb{C}}^{27}$\\\hline
$E_{7}$ & ${\mathbb{C}}^{56}$\\\hline
\end{tabular}
\ \ \ \ \
\]
Note all these standard representations are the fundamental representations
corresponding to the left nodes (i.e. $C_{1}$) in the corresponding Dynkin
diagrams (Figure 1, 2 and 3) and they are minuscule representations.

\subsection{Minuscule representations}

\begin{definition}
A minuscule $($resp. quasi-minuscule$)$ representation of a semi-simple Lie
algebra is an irreducible representation such that the Weyl group acts
transitively on all the weights $($resp. non-zero weights$)$.
\end{definition}

Minuscule representations are always fundamental representations and
quasi-minuscule representations are either minuscule or adjoint representations.%

\[%
\begin{tabular}
[c]{|c|c|}\hline
$\mathfrak{g}$ & Miniscule representations\\\hline
$A_{n}=sl\left(  n+1\right)  $ & ${\wedge}^{k}{\mathbb{C}}^{n+1}$ for
$k=1,2,\cdots,n$\\\hline
$D_{n}=o\left(  2n\right)  $ &
\begin{tabular}
[c]{ccc}%
${\mathbb{C}}^{2n},$ & ${\mathcal{S}}^{+},$ & ${\mathcal{S}}^{-}$%
\end{tabular}
\\\hline
$E_{6}$ &
\begin{tabular}
[c]{cc}%
${\mathbb{C}}^{27},$ & $\overline{{\mathbb{C}}^{27}}$%
\end{tabular}
\\\hline
$E_{7}$ & ${\mathbb{C}}^{56}$\\\hline
\end{tabular}
\
\]
Note $E_{8}$ has no minuscule representation.

\subsection{Configurations of $(-1)$-curves}

In this subsection, we describe $(-1)$-curves in $X$ and $Y$.

\begin{definition}
A $(-1)$-curve in a surface $Y$ is a genus zero $($possibly reducible$)$ curve
$l$ in $Y$ with $l\cdot l=-1$.
\end{definition}

\begin{remark}
The genus zero condition can be replaced by $l\cdot K_{Y}=-1$ by the genus
formula, where $K_{Y}$ is the canonical divisor of $Y$.
\end{remark}

Let $C_{0}$ be a curve in $X$ passing through $p$.

\begin{definition}
$(i)$ $C_{0}$ is called a $(-1)$-curve in $X$ if there exists a $(-1)$-curve
$l$ in $Y$ such that $\pi(l)=C_{0}$, or equivalently the strict transform of
$C_{0}$ is a $(-1)$-curve $\widetilde{C_{0}}$ in $Y$. $(ii)$ The multiplicity
of $C_{0}$ at $p$ is defined to be $\sum_{i=1}^{n}a_{i}[C_{i}]\in\Lambda$,
where $a_{i}=\widetilde{C_{0}}\cdot C_{i}$.
\end{definition}

Recall from Lie theory, any irreducible representation of a simple Lie algebra
is determined by its lowest weight. The fundamental
representations\footnote{The usual definition for fundamental representations
uses highest weight. But in this paper, we will use lowest weight for
simplicity of notations.} are those irreducible representations whose lowest
weight is dual to the negative of some base root. If $C_{0}\subset X$ has
multiplicity $C_{k}$ at $p$ whose dual weight determines a minuscule
representation $V$, then we use $C_{0}^{k}$ to denote $\widetilde{C_{0}}$. The
construction of such $X$'s and $C_{0}$'s can be found in appendix.

\begin{definition}
\label{min}$(i)$ We call $C_{0}$ has minuscule multiplicity $C_{k}\in\Lambda$
at $p$ if $C_{0}$ has multiplicity $C_{k}$ and the dual weight of $-C_{k}$
determines a minuscule representation $V$. $(ii)$ In this case, we denote
$I^{(\mathfrak{g},V)}=\{l:(-1)$-curve in $Y|\pi(l)=C_{0}\}$.
\end{definition}

If there is no ambiguity, we will simply write $I^{(\mathfrak{g},V)}$ as $I$.
Note that $I\subset C_{0}^{k}+\Lambda_{\geq0}$, where $\Lambda_{\geq0}=\{\sum
a_{i}[C_{i}]:a_{i}\geq0\}$.

\begin{lemma}
\label{card}In the above situation, the cardinality of $I$ is given by
$|I|=\dim V$.

\begin{proof}
By the genus formula and every $C_{i}\cong\mathbb{P}^{1}$ being a
$(-2)$-curve, we have $C_{i}\cdot K_{Y}=0$. Since $C_{0}^{k}\cdot K_{Y}=-1$,
each $(-1)$-curve has the form $l=C_{0}^{k}+\sum a_{i}C_{i}$ with $a_{i}$'s
non-negative integers. From $l\cdot$ $l=-1,$ we can determine $\{a_{i}%
\}^{\prime}$s for $l$ to be a $(-1)$-curve by direct computations.
\end{proof}
\end{lemma}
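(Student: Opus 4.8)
\textbf{Proof proposal for Lemma \ref{card}.}

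The plan is to establish a bijection between $I^{(\mathfrak{g},V)}$ and the weight set of the minuscule representation $V$, and then invoke the fact that a minuscule representation has all weights of multiplicity one, so that $\dim V$ equals the number of weights. First I would record the two basic numerical constraints on any element $l \in I$: by the genus formula together with each $C_i$ being a $(-2)$-curve we get $C_i \cdot K_Y = 0$, hence $l \cdot K_Y = C_0^k \cdot K_Y = -1$ for every $l$ with $\pi(l) = C_0$; combined with $I \subset C_0^k + \Lambda_{\geq 0}$ this forces $l = C_0^k + \sum_i a_i C_i$ with $a_i \in \mathbb{Z}_{\geq 0}$. The defining condition $l \cdot l = -1$ then becomes, using $(C_0^k)^2 = -1$ and the intersection form on $\Lambda$ being the negative of the Cartan matrix, a single quadratic equation in the $a_i$'s.

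The key step is to interpret this quadratic equation representation-theoretically. Writing $\mu_l$ for the weight in the weight lattice of $\mathfrak{g}$ that is "dual" to the class $-l$ restricted to $\Lambda$ (consistently with the identification of $\Lambda$ with the root lattice and $C_0^k$ with the lowest weight $\lambda$ of $V$), the condition $l \cdot l = -1 = (C_0^k)^2$ translates precisely into $\langle \mu_l, \mu_l \rangle = \langle \lambda, \lambda \rangle$, i.e. $\mu_l$ has the same length as the lowest weight. For a minuscule representation, the Weyl group acts transitively on \emph{all} weights, so all weights have the same length as $\lambda$; conversely, a weight of the form $\lambda + (\text{sum of positive roots with non-negative coefficients})$ that has the same length as $\lambda$ must be a weight of $V$ — this is the standard characterization of minuscule weights (every dominant weight $\leq$ a minuscule highest weight equals it, and more generally the weights of $V$ are exactly the weight-lattice elements in the convex hull with the right length, all occurring with multiplicity one). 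Thus the map $l \mapsto \mu_l$ is a bijection from $I$ onto the set of weights of $V$.

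Since $V$ is minuscule, every weight space is one-dimensional, so $|I| = \#\{\text{weights of }V\} = \dim V$. The main obstacle — and the point I would take care over — is justifying the two directions of the bijection cleanly: that each $(-1)$-curve class yields a genuine weight of $V$ (not merely a weight-lattice vector of the correct length lying outside $V$), and that every weight of $V$ is realized by an \emph{effective} class $l = C_0^k + \sum a_i C_i$ with $a_i \geq 0$ rather than by a class with some negative coefficients. The first follows because minuscule weights are exactly the weight-lattice points of that length inside the root-lattice coset of $\lambda$; the second follows because the lowest weight $\lambda$ is the unique minimal element of the weight poset of $V$, so every weight $\mu$ of $V$ satisfies $\mu - \lambda \in \Lambda_{\geq 0}$ (a non-negative integer combination of simple roots), which is exactly the statement that $a_i \geq 0$. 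Alternatively, for the purposes of this paper one may simply verify the claim by the "direct computations" referred to in the statement: for each $\mathfrak{g}$ of type $A_n, D_n, E_6, E_7$ and each minuscule $V$, solve $l \cdot l = -1$ in non-negative integers $a_i$ case by case (e.g. for $A_n$ and $V = \wedge^k \mathbb{C}^{n+1}$ one gets exactly $\binom{n+1}{k}$ solutions, matching $\dim \wedge^k \mathbb{C}^{n+1}$), which is a finite check in each case.
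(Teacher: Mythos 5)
Your proposal is correct, but it follows a genuinely different route from the paper. The paper's own proof is the minimal one: after the same opening observation (genus formula plus each $C_i$ being a $(-2)$-curve gives $C_i\cdot K_Y=0$, hence every $l\in I$ is $C_0^k+\sum a_iC_i$ with $a_i\geq 0$ and $l\cdot l=-1$), it simply asserts that the non-negative integer solutions of this quadratic equation can be enumerated by direct computation case by case, matching $\dim V$ — essentially the fallback you mention in your last sentence. You instead make the count uniform and conceptual: identifying $\Lambda$ with the root lattice and $C_0^k$ with the lowest weight $\lambda$, you check that $l\cdot l=-1$ is equivalent to $\langle\mu_l,\mu_l\rangle=\langle\lambda,\lambda\rangle$ (this translation is right: writing $l=C_0^k+\delta$, both conditions reduce to $\langle\delta,\delta\rangle=2a_k$), and then invoke two standard facts about minuscule weights — that the weights of $V$ are exactly the vectors of minimal norm in the coset $\lambda+Q$, and that all weight multiplicities are one — together with the lowest-weight property to guarantee $a_i\geq 0$ in the reverse direction. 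What your approach buys is a single type-free argument that also explains \emph{why} $|I|=\dim V$ (it is precisely the bijection with weights that Proposition \ref{representation} later exploits), at the cost of importing the characterization of minuscule weights as the norm-minimal, dominance-minimal elements of their coset; the paper's approach stays elementary and self-contained but is a finite check done separately for each pair $(\mathfrak{g},V)$. One point worth keeping explicit if you write this up: the equality $|I|=\dim V$ rests on identifying $I$ with the set of solutions $(a_i)\in\mathbb{Z}_{\geq0}^n$, i.e. each numerical solution is an effective genus-zero class with $\pi(l)=C_0$ and distinct solutions give distinct curves — you use this implicitly, as does the paper.
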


\begin{remark}
The intersection product is negative definite on the sublattice of $Pic(X)$
generated by $C_{0}^{k},C_{1},\cdots,C_{n}$ and we use its negative as an
inner product.
\end{remark}

\begin{lemma}
\label{rep}In the above situation, for any $l\in I$, $\alpha\in\Phi$, we have
$|l\cdot\alpha|\leq1$.

\begin{proof}
We claim that for any $v\in C_{0}^{k}+\Lambda$, we have $v\cdot v\leq-1$. We
prove the claim by direct computations. In $(A_{n},{\wedge}^{k}{\mathbb{C}%
}^{n+1})$ case:

$(C_{0}^{k}+\sum a_{i}C_{i})^{2}$

$=-1+2a_{k}-(a_{1}^{2}+(a_{1}-a_{2})^{2}+\cdots+(a_{k-1}-a_{k})^{2}%
)-((a_{k}-a_{k+1})^{2}+\cdots+a_{n}^{2})$

$\leq-1$.

The other cases can be proven similarly.

Since $l,~l+\alpha,$ $l-\alpha\in C_{0}^{k}+\Lambda$ by assumptions, we have
$l\cdot l=-1\geq(l+\alpha)\cdot(l+\alpha)$, hence $l\cdot\alpha\leq1$. Also
$l\cdot l=-1\geq(l-\alpha)\cdot(l-\alpha)$, hence $l\cdot\alpha\geq-1$.
\end{proof}
\end{lemma}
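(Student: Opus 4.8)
The plan is to deduce the bound $|l\cdot\alpha|\leq 1$ from a single negativity statement: every class in the coset $C_{0}^{k}+\Lambda$ has self-intersection at most $-1$. Granting this, the conclusion is immediate. Since $\alpha\in\Phi\subset\Lambda$, both $l+\alpha$ and $l-\alpha$ again lie in $C_{0}^{k}+\Lambda$, so $(l\pm\alpha)^{2}\leq -1$; combined with $l\cdot l=-1$ and $\alpha\cdot\alpha=-2$ this reads $-3\pm 2\,l\cdot\alpha\leq -1$, i.e. $\pm\,l\cdot\alpha\leq 1$, hence $|l\cdot\alpha|\leq 1$. I would also record at the outset, as in the proof of Lemma \ref{card}, that any $l\in I$ has the shape $l=C_{0}^{k}+\sum_{i}a_{i}C_{i}$ with $a_{i}\in\mathbb{Z}_{\geq 0}$ and $l\cdot l=-1$, so that the hypotheses of the negativity statement apply to $l$ and hence to $l\pm\alpha$.

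So the real content is the claim that $v\cdot v\leq -1$ for all $v\in C_{0}^{k}+\Lambda$. I would prove this by a parity argument rather than the type-by-type expansion. Write $v=C_{0}^{k}+\lambda$ with $\lambda=\sum a_{i}C_{i}\in\Lambda$. First, $v\neq 0$: since $C_{0}^{k}$ is a $(-1)$-curve we have $C_{0}^{k}\cdot K_{Y}=-1$, whereas $C_{i}\cdot K_{Y}=0$ for every $i$ (each $C_{i}$ is a $(-2)$-curve), so the class of $C_{0}^{k}$ is not a $\mathbb{Z}$-combination of the $C_{i}$. By the Remark preceding the lemma, the intersection form is negative definite on the sublattice spanned by $C_{0}^{k},C_{1},\dots,C_{n}$, hence $v\cdot v<0$. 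Second, $v\cdot v$ is odd: $(C_{0}^{k})^{2}=-1$, the numbers $C_{0}^{k}\cdot C_{i}$ are integers, and $\Lambda$ is an even lattice (the $C_{i}$ form an $ADE$ root base, so $C_{i}^{2}=-2$ and $C_{i}\cdot C_{j}\in\{0,1\}$), whence $v\cdot v=-1+2\,(C_{0}^{k}\cdot\lambda)+\lambda\cdot\lambda$ is an odd integer. A negative odd integer is $\leq -1$, which is the claim. If one prefers to stay concrete, the same inequality can be checked by expanding $(C_{0}^{k}+\sum a_{i}C_{i})^{2}$ in each $ADE$ type and writing it as $-1$ plus the single cross term contributed by $C_{0}^{k}$ minus a sum of squares in the $a_{i}$, exactly as displayed for $(A_{n},\wedge^{k}\mathbb{C}^{n+1})$; this is routine but must be done case by case.

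I expect the self-intersection claim to be the only delicate point; everything else is formal bookkeeping. Within it the two things to watch are (a) that $C_{0}^{k}\notin\Lambda$, so that negative definiteness yields strict negativity rather than just $v\cdot v\leq 0$, and (b) the integrality of the $C_{0}^{k}\cdot C_{i}$ together with the evenness of $\Lambda$, which is what fixes the parity. Both are standard, so once the coset picture is in place the argument closes in a line. Note that the minuscule hypothesis on $C_{k}$ is what guarantees that $I$ is nonempty and that its elements have the stated form with $a_{i}\geq 0$ (cf. Lemma \ref{card}), but the bound $|l\cdot\alpha|\leq 1$ itself only uses that $l\in C_{0}^{k}+\Lambda$ with $l\cdot l=-1$.
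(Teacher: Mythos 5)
Your reduction of the lemma to the single claim that every $v\in C_{0}^{k}+\Lambda$ satisfies $v\cdot v\leq-1$, and the way you then extract $|l\cdot\alpha|\leq1$ from $(l\pm\alpha)^{2}\leq-1$, is exactly the paper's route. The gap is in your proof of that claim. You obtain $v\cdot v<0$ by invoking the Remark's assertion that the intersection form is negative definite on the sublattice generated by $C_{0}^{k},C_{1},\dots,C_{n}$; but that assertion cannot be taken at face value, and in most of the relevant cases it is simply false, so nothing in your argument actually forces $v\cdot v<0$. Concretely: in the $(D_{n},\mathbb{C}^{2n})$ case the class $F=2C_{0}^{1}+2C_{1}+\cdots+2C_{n-2}+C_{n-1}+C_{n}$ lies in that sublattice and has $F\cdot F=0$ (the paper itself uses this in the proof of Lemma \ref{branch3}); in the $(A_{3},\wedge^{2}\mathbb{C}^{4})$ case $2C_{0}^{2}+C_{1}+2C_{2}+C_{3}$ has square $0$; and in the $(A_{5},\wedge^{3}\mathbb{C}^{6})$ case $2C_{0}^{3}+C_{1}+2C_{2}+3C_{3}+2C_{4}+C_{5}$ has square $+2$, so the form on the span is even indefinite. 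In general the real span of $C_{0}^{k}$ and $\Lambda$ is negative definite exactly when $\langle w_{k},w_{k}\rangle$ (the $(k,k)$ entry of the inverse Cartan matrix) is $<1$, which fails for the middle exterior powers of $A_{n}$, the spinor nodes of $D_{n}$, and the standard representations of $E_{6}$ (value $4/3$) and $E_{7}$ (value $3/2$). So no argument carried out at the level of the rational span can yield the claim; one must use that the coefficient of $C_{0}^{k}$ is exactly $1$ and that the $a_{i}$ are integers.

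That integrality is precisely what the paper's computation exploits: after expanding, the inequality reduces to $2a_{k}\leq$ a sum of squares of integers, each term controlled by $x\leq x^{2}$ for $x\in\mathbb{Z}$; equivalently, the claim is the statement that a minuscule weight has minimal norm in its coset modulo the root lattice, which is exactly where the minuscule hypothesis enters (for a non-minuscule node, e.g. the adjoint node of $E_{8}$, the claim is false). Your parity observation ($v\cdot v$ odd) is correct but cannot substitute for this: it would only upgrade $v\cdot v\leq0$ to $v\cdot v\leq-1$, and you have no proof of $v\cdot v\leq0$. Your fallback sentence — expand $(C_{0}^{k}+\sum a_{i}C_{i})^{2}$ case by case as in the displayed $A_{n}$ computation — is in fact the paper's proof, but in your write-up it is deferred rather than carried out, so the key claim remains unproved as it stands. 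For the same reason your closing remark that the bound uses only $l\in C_{0}^{k}+\Lambda$ and $l\cdot l=-1$ is misleading: the coset estimate itself depends on $C_{k}$ being minuscule.
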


\begin{lemma}
\label{lc}In the above situation, for any $l\in I$ which is not $C_{0}^{k}$,
there exists $C_{i}$ such that $l\cdot C_{i}=-1$.

\begin{proof}
From $l=C_{0}^{k}+\sum a_{i}C_{i}\neq C_{0}^{k}$ $(a_{i}\geq0)$, we have
$a_{k}\geq1$. From $l\cdot l=-1$, we have $(\sum a_{i}C_{i})^{2}=-2a_{k}$. If
there does not exist such an $i$ with $l\cdot C_{i}=-1$, then by Lemma
\ref{rep}, $l\cdot C_{i}\geq0$ for every $i$, $l\cdot(\sum a_{i}C_{i})\geq0$.
But $l\cdot(\sum a_{i}C_{i})=a_{k}+(\sum a_{i}C_{i})^{2}=-a_{k}\leq-1$ leads
to a contradiction.
\end{proof}
\end{lemma}

\begin{lemma}
\label{cohomology2}In the above situation, for any $l,l^{\prime}\in I$,
$H^{2}(Y,O(l-l^{\prime}))=0$.

\begin{proof}
Firstly, we prove $H^{2}(Y,O(C_{0}^{k}-l))=0$ for any $l=C_{0}^{k}+\sum
a_{i}C_{i}\in I$ inductively on $ht(l):=\sum a_{i}$. If $ht(l)=0$, i.e. $l$ is
$C_{0}^{k}$, the claim follows from $\mathit{p}_{g}=0$. Suppose the claim is
true for any $l^{\prime}\in I$ with $ht(l^{\prime})\leq m-1$. Then for any
$l\in I$ with $ht(l)=m$, by Lemma \ref{lc}, there exists $i$ such that $l\cdot
C_{i}=-1$. This implies $(l-C_{i})\in I$ with $ht(l-C_{i})=m-1$ and therefore
$H^{2}(Y,O(C_{0}^{k}-(l-C_{i})))=0$ by induction hypothesis. Using the long
exact sequence induced from%
\[
0\rightarrow O_{Y}(C_{0}^{k}-l)\rightarrow O_{Y}(C_{0}^{k}-(l-C_{i}%
))\rightarrow O_{C_{i}}(C_{0}^{k}-(l-C_{i}))\rightarrow0
\]
and $O_{C_{i}}(C_{0}^{k}-(l-C_{i}))\cong O_{\mathbb{P}^{1}}(-1)$ or
$O_{\mathbb{P}^{1}}$, we have the claim.

If $H^{2}(Y,O(l-l^{\prime}))\neq0$, then there exists a section $s\in
H^{0}(Y,K_{Y}(l^{\prime}-l))$ by Serre duality. Since there exists a nonzero
section $t\in H^{0}(Y,O(l-C_{0}^{k}))$, we have $st\in H^{0}(Y,K_{Y}%
(l^{\prime}-C_{0}^{k}))\cong H^{2}(Y,O(C_{0}^{k}-l^{\prime}))=0$, which is a contradiction.
\end{proof}
\end{lemma}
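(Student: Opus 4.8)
The plan is to reduce the general statement to the special case $l'=C_0^k$ and then prove that special case by induction on the height $ht(l)=\sum a_i$, where $l=C_0^k+\sum a_iC_i$. Throughout I will use Serre duality on the smooth projective surface $Y$ in the form $H^2(Y,O(D))\cong H^0(Y,O(K_Y-D))^{*}$, together with the fact (recorded in Lemma \ref{card}) that every $l\in I$ has the shape $C_0^k+\sum a_iC_i$ with $a_i\geq 0$, so that $l-C_0^k$ is an effective divisor.

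\textbf{Step 1: the case $l'=C_0^k$.} I claim $H^2(Y,O(C_0^k-l))=0$ for every $l\in I$, by induction on $ht(l)$. If $ht(l)=0$ then $l=C_0^k$ and $O(C_0^k-l)=O_Y$, so $H^2(Y,O_Y)=0$ by $p_g(Y)=0$. For the inductive step, suppose the claim holds for all elements of $I$ of height $\leq m-1$ and take $l\in I$ with $ht(l)=m>0$. By Lemma \ref{lc} there is some $C_i$ with $l\cdot C_i=-1$; a short computation with $l\cdot l=-1$, $C_i^2=-2$ shows $(l-C_i)^2=-1$, and since $l\cdot C_i<0$ one checks the corresponding coefficient of $l$ is positive, so $l-C_i\in I$ with $ht(l-C_i)=m-1$. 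Now tensor the structure sequence of $C_i$ by $O_Y(C_0^k-(l-C_i))$:
\[
0\rightarrow O_Y(C_0^k-l)\rightarrow O_Y(C_0^k-(l-C_i))\rightarrow O_{C_i}(C_0^k-(l-C_i))\rightarrow 0 .
\]
Because $C_0^k$ has minuscule multiplicity $C_k$ we have $C_0^k\cdot C_i\in\{0,1\}$, and with $(l-C_i)\cdot C_i=1$ one gets $\deg\big(O_{C_i}(C_0^k-(l-C_i))\big)\in\{-1,0\}$, so this line bundle on $C_i\cong\mathbb{P}^1$ has vanishing $H^1$. The middle term has vanishing $H^2$ by the induction hypothesis, so the long exact sequence forces $H^2(Y,O(C_0^k-l))=0$.

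\textbf{Step 2: the general case.} Suppose, for contradiction, that $H^2(Y,O(l-l'))\neq 0$ for some $l,l'\in I$. By Serre duality there is a nonzero section $s\in H^0(Y,O(K_Y+l'-l))$. Since $l-C_0^k=\sum a_iC_i$ is effective, there is also a nonzero section $t\in H^0(Y,O(l-C_0^k))$. Then $st$ is a nonzero section (products of nonzero sections of line bundles on the integral variety $Y$ are nonzero) of $O(K_Y+l'-l)\otimes O(l-C_0^k)=O(K_Y+l'-C_0^k)$, so $H^0(Y,O(K_Y+l'-C_0^k))\neq 0$. But by Serre duality this group is dual to $H^2(Y,O(C_0^k-l'))$, which vanishes by Step 1 — a contradiction. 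Hence $H^2(Y,O(l-l'))=0$.

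\textbf{Expected main obstacle.} The analytic input is completely standard once Step 1 is in place; the real work is the combinatorial bookkeeping in the inductive step of Step 1 — namely verifying that the $C_i$ produced by Lemma \ref{lc} genuinely yields $l-C_i\in I$ (effectiveness of the coefficients and $(l-C_i)^2=-1$) and that the restricted line bundle on $C_i$ has degree $\geq -1$, the latter being exactly where the minuscule multiplicity hypothesis $C_0^k\cdot C_i\leq 1$ is used. These facts hold uniformly across the $ADE$ types, but checking them cleanly (rather than case by case) is the delicate point.
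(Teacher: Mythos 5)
Your proposal is correct and follows essentially the same route as the paper: the same induction on $ht(l)$ for the special case $H^{2}(Y,O(C_{0}^{k}-l))=0$, using Lemma \ref{lc} and the restriction sequence to $C_{i}$ with degree $\geq -1$, and then the same Serre-duality argument multiplying $s$ by a section of the effective divisor $l-C_{0}^{k}$ to reduce the general case to the special one. The extra checks you spell out (that $l-C_{i}\in I$ and that the restricted degree lies in $\{-1,0\}$) are exactly the implicit steps in the paper's proof, so nothing is missing.
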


\subsection{Minuscule representations from $(-1)$-curves}

Recall from the $ADE$ root system $\Phi$, we can recover the corresponding Lie
algebra $\mathfrak{g}=\mathfrak{h}\oplus{\bigoplus_{\alpha\in\Phi}%
}\mathfrak{g}_{\alpha}$. As before, we use $\{x_{\alpha}$'s, $h_{i}$'s$\}$ to
denote its Chevalley basis. If $C_{0}$ has minuscule multiplicity $C_{k}$, we
denote%
\[
V_{0}:=\mathbb{C}^{I}={\bigoplus_{l\in I}}\mathbb{C}\langle v_{l}%
\rangle\text{,}%
\]
where $v_{l}$ is the base vector of $V_{0}$ generated by $l$. Then we define a
bilinear map $[,]:\mathfrak{g}\otimes V_{0}\rightarrow V_{0}$ (possibly up to
$\pm$ signs) as follows:

\begin{center}
$[x,v_{l}]=\left\{
\begin{array}
[c]{cc}%
\langle x,l\rangle v_{l} & \text{if }x\in\mathfrak{h}\\
\pm v_{l+\alpha} & \text{if }x=x_{\alpha},~l+\alpha~\in I\\
0 & \text{if }x=x_{\alpha},~l+\alpha~\notin I
\end{array}
\right.  $
\end{center}

\begin{proposition}
\label{representation}The signs in the above bilinear map $\mathfrak{g}\otimes
V_{0}\rightarrow V_{0}$ can be chosen so that it defines an action of
$\mathfrak{g}$ on $V_{0}$. Moreover, $V_{0}$ is isomorphic to the minuscule
representation $V$.

\begin{proof}
For the first part, similar to $\cite{L}$, we use Lemma \ref{rep} to show
$[[x,y],v_{l}]=[x,[y,v_{l}]]-[y,[x,v_{l}]]$.

For the second part, since $[x_{\alpha},v_{C_{0}^{k}}]=0$ for any $\alpha
\in\Phi^{-}$, $v_{C_{0}^{k}}$ is the lowest weight vector of $V_{0}$ with
weight corresponding to $-C_{k}$. Also we know the fundamental representation
$V$ corresponding to $-C_{k}$ has the same dimension with $V_{0}$ by lemma
\ref{card}. Hence $V_{0}$ is isomorphic to the minuscule representation $V$.
\end{proof}
\end{proposition}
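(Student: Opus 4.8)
The plan is to verify that the bracket $[,]:\mathfrak{g}\otimes V_0\to V_0$ satisfies the defining relation $[[x,y],v_l]=[x,[y,v_l]]-[y,[x,v_l]]$ for all pairs of Chevalley generators, after which a standard argument fixes the signs globally. First I would dispose of the easy cases: when $x,y\in\mathfrak h$ both sides vanish; when $x=h_i\in\mathfrak h$ and $y=x_\alpha$, the relation reduces to the numerical identity $\langle h_i,l+\alpha\rangle=\langle h_i,l\rangle+\langle\alpha,C_i\rangle$, which is just bilinearity of the intersection pairing together with $[h_i,x_\alpha]=\langle\alpha,C_i\rangle x_\alpha$. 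The substance is the case $x=x_\alpha$, $y=x_\beta$ with $\alpha,\beta\in\Phi$ independent. Here I would argue exactly as in the cited reference (the analogue of $[L]$): the key input is Lemma \ref{rep}, which says $|l\cdot\gamma|\le 1$ for every $l\in I$ and $\gamma\in\Phi$, so each weight of $V_0$ appears with multiplicity one and no ``double steps'' occur. This forces, for each root $\alpha$, the $\alpha$-action on $V_0$ to decompose $V_0$ into $\alpha$-strings of length at most $2$, which is precisely the situation already handled for the adjoint representation in \S 2 via $n_{\alpha,\beta}=\pm1$.

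Concretely, for the hard case I would split on whether $\alpha+\beta\in\Phi$. If $\alpha+\beta\notin\Phi$ and $\alpha+\beta\ne 0$, then $[x_\alpha,x_\beta]=0$, and one checks the right-hand side $[x_\alpha,[x_\beta,v_l]]-[x_\beta,[x_\alpha,v_l]]$ also vanishes: were it nonzero, we would get $l,\ l+\alpha,\ l+\beta,\ l+\alpha+\beta$ all in $C_0^k+\Lambda$, but then $l$ and $l+\alpha+\beta$ both have square $-1$ while $l\cdot(\alpha+\beta)$ would be forced outside $\{-1,0,1\}$ or $\alpha+\beta$ would be a root after all, a contradiction with Lemma \ref{rep} and the claim $v\cdot v\le -1$ for $v\in C_0^k+\Lambda$ established in its proof. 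If $\alpha+\beta\in\Phi$, then $[x_\alpha,x_\beta]=n_{\alpha,\beta}x_{\alpha+\beta}$ with $n_{\alpha,\beta}=\pm1$, and the relation becomes a compatibility among the four signs attached to the arrows $v_l\to v_{l+\alpha}$, $v_l\to v_{l+\beta}$, $v_{l+\beta}\to v_{l+\alpha+\beta}$, $v_{l+\alpha}\to v_{l+\alpha+\beta}$ together with $n_{\alpha,\beta}$; this is a sign cocycle condition entirely parallel to the Jacobi-derived identity $n_{\alpha,\beta}n_{\alpha+\beta,\gamma}+n_{\beta,\gamma}n_{\beta+\gamma,\alpha}+n_{\gamma,\alpha}n_{\gamma+\alpha,\beta}=0$ recorded in \S 2, and it is consistent precisely because $\Phi$ together with the weights of a minuscule $V$ forms (after the standard embedding $V\hookrightarrow$ a larger root system, or directly) a configuration where all relevant structure constants are $\pm1$.

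Having checked the bracket is a well-defined Lie algebra action for a suitable choice of signs, the second assertion is short. The vector $v_{C_0^k}$ satisfies $[x_\alpha,v_{C_0^k}]=0$ for every $\alpha\in\Phi^-$, because $l+\alpha=C_0^k+\alpha$ would have to lie in $I\subset C_0^k+\Lambda_{\ge0}$, forcing $\alpha\in\Lambda_{\ge0}\cap\Phi^-=\emptyset$; hence $v_{C_0^k}$ is a lowest weight vector, and its $\mathfrak h$-weight is read off from $[h_i,v_{C_0^k}]=\langle C_0^k,C_i\rangle v_{C_0^k}$, i.e. the weight dual to $-C_k$ since $C_0^k\cdot C_i$ equals the multiplicity coefficient. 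By irreducibility of the lowest-weight module generated by $v_{C_0^k}$ — which is $V$ by definition of $C_k$ being a minuscule multiplicity — and since $I$ is generated from $C_0^k$ by adding roots, the submodule $\mathfrak{g}\cdot v_{C_0^k}$ is all of $V_0$; comparing dimensions via Lemma \ref{card}, $\dim V_0=|I|=\dim V$, so $V_0\cong V$.

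The main obstacle is the sign bookkeeping in the independent-roots case $x=x_\alpha,\ y=x_\beta$: one must exhibit an actual global choice of signs on the arrows $v_l\to v_{l+\alpha}$ making all these local compatibilities hold simultaneously, rather than merely checking them one relation at a time. The cleanest route is to import the structure constants from an ambient simply-laced root system in which $V$ sits as (part of) a root space — for $A_n,D_n,E_6$ the minuscule weights together with $\Phi$ are the nonzero weights of a bigger $ADE$ system — so that the needed signs are restrictions of a Chevalley basis already known to be consistent; alternatively one invokes the general fact that for minuscule $V$ the obstruction to such a sign choice vanishes, exactly the content of the argument in $[L]$ that the proof defers to.
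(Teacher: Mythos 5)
Your overall skeleton is the paper's: verify $[[x,y],v_{l}]=[x,[y,v_{l}]]-[y,[x,v_{l}]]$ using Lemma \ref{rep} (with the global sign choice ultimately deferred to \cite{L}), then identify $V_{0}$ with $V$ via the lowest weight vector $v_{C_{0}^{k}}$ and the dimension count of Lemma \ref{card}; your second part is correct and matches the paper. However, your case analysis for $x=x_{\alpha}$, $y=x_{\beta}$ with $\alpha+\beta\notin\Phi\cup\{0\}$ contains a false step. You claim that if the right-hand side were nonzero, the presence of $l,\ l+\alpha,\ l+\beta,\ l+\alpha+\beta$ in $C_{0}^{k}+\Lambda$ would contradict Lemma \ref{rep} or the estimate $v\cdot v\leq-1$. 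When $\alpha\cdot\beta=0$ (so $(\alpha+\beta)^{2}=-4$ and $\alpha+\beta$ is not a root) there is no contradiction: $l\cdot\alpha=l\cdot\beta=1$ is allowed by Lemma \ref{rep}, the value $l\cdot(\alpha+\beta)=2$ is not excluded because $\alpha+\beta\notin\Phi$, and $(l+\alpha+\beta)^{2}=-1$, so all four classes can lie in $I$. This configuration genuinely occurs, e.g.\ in the $D_{n}$ standard case with $l=C_{0}^{1}+C_{1}+\cdots+C_{n-2}$, $\alpha=C_{n-1}$, $\beta=C_{n}$: the four $(-1)$-curves form a square around the trivalent node. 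There $[x_{\alpha},[x_{\beta},v_{l}]]$ and $[x_{\beta},[x_{\alpha},v_{l}]]$ are each $\pm v_{l+\alpha+\beta}$, and their cancellation is not automatic; it is exactly the constraint $n_{\beta,w}n_{\alpha,\beta+w}=n_{\alpha,w}n_{\beta,\alpha+w}$ (the relation recorded after the proposition, specialized to $n_{\alpha,\beta}=0$). So the ``square'' case you dismissed as vacuous is precisely where the sign-consistency problem lives, and by asserting it vanishes for free you have understated what must be proved. (By contrast, when $\alpha\cdot\beta=-1$ both iterated brackets do vanish term by term, since $l\cdot\alpha=2$ would be forced, so that sub-case is fine.)

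Your closing paragraph partially repairs this by conceding that a global choice of signs must be exhibited and deferring to \cite{L}, which is the same level of detail as the paper's own proof, so the gap is fixable rather than fatal. But be careful with the proposed shortcut of importing structure constants from an ambient simply-laced root system: that embedding is available for some pairs (e.g.\ $(E_{6},\mathbb{C}^{27})$ inside $E_{7}$, $(D_{8},\mathcal{S}^{+})$ inside $E_{8}$) but not uniformly for all cases treated here, such as $(A_{n},\wedge^{k}\mathbb{C}^{n+1})$ for general $k$ or $(D_{n},\mathcal{S}^{\pm})$ for large $n$, so as stated it does not give a complete argument; one should either invoke Lurie's sign argument directly or verify the relations $n_{\alpha,\beta}n_{\alpha+\beta,w}-n_{\beta,w}n_{\alpha,\beta+w}+n_{\alpha,w}n_{\beta,\alpha+w}=0$, including the orthogonal ``square'' instances above.
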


Here we show how to determine the signs. Take any $l\in I$, $v_{l}$ is a
weight vector of the above action. For $x=x_{\alpha}$ and $v_{l}$ with weight
$w$, we define $[x,v_{l}]=n_{\alpha,w}v_{l+\alpha}$, where $n_{\alpha,w}=\pm1$
if $l+\alpha\in I$, otherwise $n_{\alpha,w}=0$. By $[[x,y],v_{l}%
]=[x,[y,v_{l}]]-[y,[x,v_{l}]]$, we have $n_{\alpha,\beta}n_{\alpha+\beta
,w}-n_{\beta,w}n_{\alpha,\beta+w}+n_{\alpha,w}n_{\beta,\alpha+w}=0$.

\begin{remark}
Recall for any $l=C_{0}^{k}+\sum a_{i}C_{i}\in I$, we define $ht(l):=\sum
a_{i}$. Using this, we can define a filtered structure for $I:I=I_{0}\supset
I_{1}\supset\cdots\supset I_{m}$, where $m=\max_{l\in I}ht(l)$, $I_{i}=\{l\in
I|ht(l)\leq m-i\}$ and $I_{i}\backslash I_{i+1}=\{l\in I|ht(l)=m-i\}$. This
$ht(l)$ also enables us to define a partial order of $I$. Say $|I|=N$, we
denote $l_{N}:=C_{0}^{k}$ since it is the only element with $ht=0$. Similarly,
$l_{N-1}:=C_{0}^{k}+C_{k}$. Of course, there are some ambiguity of this
ordering, if so, we will just make a choice to order these $(-1)$-curves.
\end{remark}

\subsection{Bundles from $(-1)$-curves}

The geometry of $(-1)$-curves in $Y$ can be used to construct representation
bundles of $\mathcal{E}_{\varphi}^{\mathfrak{g}}$ for every minuscule
representation of $\mathfrak{g}$. The proofs of theorems in this subsection
will be given in $\S 7$.

When $C_{0}\subset X$ has minuscule multiplicity $C_{k}$ at $p$ with the
corresponding minuscule representation $V$, we define$\footnote{When $X$ is a
del Pezzo surface, we use lines in $X$ to construct bundles [FM]. So here we
use $(-1)$-curves in $X$ to construct bundles.}$

\begin{center}
$\mathfrak{L}_{0}^{(\mathfrak{g},V)}:=\bigoplus_{l\in I^{(\mathfrak{g},V)}%
}O(l)$.
\end{center}

$\mathfrak{L}_{0}^{(\mathfrak{g},V)}$ has a natural filtration $F^{\bullet}$:
$\mathfrak{L}_{0}^{(\mathfrak{g},V)}=F^{0}\mathfrak{L}\supset F^{1}%
\mathfrak{L}\supset\cdots\supset F^{m}\mathfrak{L}$, induced from the
flittered structure on $I$, namely $F^{i}\mathfrak{L}_{0}^{(\mathfrak{g}%
,V)}=\bigoplus_{l\in I_{i}}O(l)$.

$\mathfrak{L}_{0}^{(\mathfrak{g},V)}$ can not descend to $X$ as $O_{C_{k}%
}(C_{0}^{k})\cong O_{\mathbb{P}^{1}}(1)$ (because $C_{k}\cdot C_{0}^{k}=1$ by
the definition of the minuscule multiplicity). For any $C_{i}$ and any $l\in
I$, we have $O_{C_{i}}(l)\cong O_{\mathbb{P}^{1}}(\pm1)$ or $O_{\mathbb{P}%
^{1}}$ by Lemma \ref{rep}. For every fixed $C_{i}$, if there is a $l\in I$
such that $O_{C_{i}}(l)\cong O_{\mathbb{P}^{1}}(1)$, then $(l+C_{i}%
)^{2}=-1=(l+C_{i})\cdot K_{Y}$, i.e. $l+C_{i}\in I$, also $O_{C_{i}}%
(l+C_{i})\cong O_{\mathbb{P}^{1}}(-1)$. That means among the direct summands
of $\mathfrak{L}_{0}^{(\mathfrak{g},V)}|_{C_{i}}$, $O_{\mathbb{P}^{1}}%
(1)\ $and $O_{\mathbb{P}^{1}}(-1)$ occur in pairs, and each pair is given by
two $(-1)$-curves in $I$ whose difference is $C_{i}$. This gives us a chance
to deform $\mathfrak{L}_{0}^{(\mathfrak{g},V)}$ to get another bundle which
can descend to $X$.

\begin{theorem}
\label{thm2}If there exists a $(-1)$-curve $C_{0}$ in $X$ with minuscule
multiplicity $C_{k}$ at $p$ and $\rho:\mathfrak{g}\longrightarrow End(V)$ is
the corresponding representation, then%
\[
(\mathfrak{L}_{\varphi}^{(\mathfrak{g},V)}:=\bigoplus_{l\in I}O(l),\text{
}\overline{\partial}_{\mathfrak{\varphi}}:=\overline{\partial}_{0}%
+\rho(\varphi))
\]
with $\varphi\in\Psi_{Y}$ is a holomorphic bundle over $Y$ which preserves the
filtration on $\mathfrak{L}_{0}^{(\mathfrak{g},V)}$ and it is a holomorphic
representation bundle of $\mathcal{E}_{\varphi}^{\mathfrak{g}}$. Moreover,
$\mathfrak{L}_{\varphi}^{(\mathfrak{g},V)}$ is trivial on $C_{i}$ if and only
if $[\varphi_{C_{i}}|_{C_{i}}]\neq0\in H^{1}(Y,O_{C_{i}}(C_{i}))$.
\end{theorem}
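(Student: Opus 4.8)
The plan is to mirror, at the level of the representation bundle, the inductive construction that produced $\mathcal{E}_\varphi^{\mathfrak{g}}$ in Proposition \ref{holomorphic}. Fix $\varphi=(\varphi_\alpha)_{\alpha\in\Phi^+}\in\Psi_Y$, so that the compatibility relations $\overline{\partial}_0\varphi_\alpha=\sum_{\beta+\gamma=\alpha}n_{\beta,\gamma}\varphi_\beta\varphi_\gamma$ hold. Using the $\mathfrak{g}$-action on $V_0=\mathbb{C}^I$ from Proposition \ref{representation} (with its structure constants $n_{\alpha,w}$), write $\rho(\varphi)=\sum_{\alpha\in\Phi^+}\rho(\varphi_\alpha)$ as an operator-valued $(0,1)$-form; since each $\varphi_\alpha$ is a section of $O(\alpha)$ and $\rho(x_\alpha)$ maps the $v_l$-summand to the $v_{l+\alpha}$-summand (zero unless $l+\alpha\in I$), the operator $\rho(\varphi_\alpha)$ sends $O(l)$ into $\Omega^{0,1}(Y,O(l+\alpha))$, hence $\overline{\partial}_\varphi:=\overline{\partial}_0+\rho(\varphi)$ is a well-defined first-order operator on $\mathfrak{L}_0^{(\mathfrak{g},V)}$. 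Because $\alpha\in\Phi^+$ strictly increases $ht$, $\rho(\varphi_\alpha)$ maps $F^i\mathfrak{L}$ into (a $(0,1)$-form valued in) $F^{i}\mathfrak{L}$, so $\overline{\partial}_\varphi$ preserves the filtration $F^\bullet$; this is the first claimed property.

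For the integrability $\overline{\partial}_\varphi^2=0$, compute $\overline{\partial}_\varphi^2=\overline{\partial}_0\rho(\varphi)+\rho(\varphi)\wedge\rho(\varphi)$. On the summand-by-summand level the coefficient of the map $O(l)\to\Omega^{0,2}(Y,O(l+\alpha))$ is $\overline{\partial}_0\big(n_{\alpha,w}\,c_\alpha^U\big)+\sum_{\beta+\gamma=\alpha}n_{\beta,w}n_{\gamma,\beta+w}\,c_\beta^U c_\gamma^U$ (with $w$ the weight of $v_l$), which vanishes: the $\overline{\partial}_0$-term equals $-n_{\alpha,w}\sum_{\beta+\gamma=\alpha}n_{\beta,\gamma}c_\beta^U c_\gamma^U$ by $\varphi\in\Psi_Y$, and the two expressions agree precisely by the identity $n_{\alpha,\beta}n_{\alpha+\beta,w}-n_{\beta,w}n_{\alpha,\beta+w}+n_{\alpha,w}n_{\beta,\alpha+w}=0$ derived right after Proposition \ref{representation} (after relabelling; the sign bookkeeping is the routine part). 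Thus $\overline{\partial}_\varphi$ is a holomorphic structure. The fact that $\mathfrak{L}_\varphi^{(\mathfrak{g},V)}$ is a holomorphic representation bundle of $\mathcal{E}_\varphi^{\mathfrak{g}}$ — i.e. $\overline{\partial}_\varphi\circ\rho(s)=\rho(\overline{\partial}_\varphi^{\,\mathrm{ad}}s)\circ(\cdot)+\rho(s)\circ\overline{\partial}_\varphi$ for local holomorphic sections $s$ of $\mathcal{E}_\varphi^{\mathfrak{g}}$ — again reduces to the Jacobi-type identity $n_{\alpha,\beta}n_{\alpha+\beta,w}+n_{\beta,w}n_{\beta+w,\alpha}+n_{w,\alpha}n_{w+\alpha,\beta}=0$, exactly as Proposition \ref{Lie bracket} followed from the Jacobi identity for the Chevalley constants.

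For the triviality criterion on $C_i$, note that $\overline{\partial}_\varphi$ restricted to $C_i$ only involves those $\varphi_\alpha|_{C_i}$ with $\alpha=C_i$ (any $\alpha\in\Phi^+$ with $\alpha\ne C_i$ satisfies $O_{C_i}(\alpha)\cong O_{\mathbb{P}^1}$ or $O_{\mathbb{P}^1}(-1)$, and one checks these contributions can be absorbed or vanish on restriction, paralleling the reductions of $\S 4$–$\S 6$). As observed just before the theorem, the summands of $\mathfrak{L}_0^{(\mathfrak{g},V)}|_{C_i}$ pair up as $O_{\mathbb{P}^1}(1)\oplus O_{\mathbb{P}^1}(-1)$, one pair $O(l)\oplus O(l+C_i)$ for each $l\in I$ with $l\cdot C_i=1$, together with $O_{\mathbb{P}^1}$-summands. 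On each such pair, $\overline{\partial}_\varphi|_{C_i}$ acts through $\overline{\partial}_0+\rho(\varphi_{C_i})|_{C_i}$, which is precisely the Euler-sequence extension $0\to O_{\mathbb{P}^1}(-1)\to O_{\mathbb{P}^1}^{\oplus2}\to O_{\mathbb{P}^1}(1)\to 0$ when the extension class $[\varphi_{C_i}|_{C_i}]\in H^1(C_i,O_{C_i}(C_i))\cong\mathbb{C}$ is nonzero, and is the split bundle $O_{\mathbb{P}^1}(1)\oplus O_{\mathbb{P}^1}(-1)$ when it is zero. Hence each pair becomes $O_{\mathbb{P}^1}^{\oplus 2}$ exactly when $[\varphi_{C_i}|_{C_i}]\ne 0$, and $\mathfrak{L}_\varphi^{(\mathfrak{g},V)}|_{C_i}$ is then globally trivial; conversely if $[\varphi_{C_i}|_{C_i}]=0$ the bundle still contains an $O_{\mathbb{P}^1}(1)$ summand, so it is nontrivial. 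The main obstacle is the last point: verifying that on $C_i$ no higher-$ht$ term $\rho(\varphi_\alpha)$, $\alpha\ne C_i$, spoils the clean pairwise Euler-sequence picture — equivalently, that the filtration can be refined so that $\overline{\partial}_\varphi|_{C_i}$ is block-upper-triangular with the only off-diagonal blocks being the $2\times 2$ Euler extensions; this is where the case-by-case analysis of $\S 4$–$\S 6$ (and its completion in $\S 7$) is genuinely needed, and I would defer the details there exactly as the paper does.
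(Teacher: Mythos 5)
Your first two paragraphs are fine, and in fact cleaner than the route the paper takes: you verify directly that $\overline{\partial}_0+\rho(\varphi)$ squares to zero and is $\mathcal{E}_\varphi^{\mathfrak{g}}$-equivariant from the fact that $\rho$ is a Lie algebra homomorphism (the identity $n_{\alpha,\beta}n_{\alpha+\beta,w}-n_{\beta,w}n_{\alpha,\beta+w}+n_{\alpha,w}n_{\beta,\alpha+w}=0$ of \S 3.4) together with $\varphi\in\Psi_Y$, whereas the paper constructs triangular data $\eta\in\Xi_Y^{\mathfrak g}$ case by case in \S 4--\S 6 and only in \S 7 identifies $\eta_{i,j}=n_{\alpha,w_i}\varphi_\alpha$, i.e. $\overline{\partial}_\eta=\overline{\partial}_0+\rho(\varphi)$. (One small point: with the paper's indexing $I_i=\{l:ht(l)\le m-i\}$ the subbundles actually preserved by $\overline{\partial}_\varphi$ are those spanned by the curves of \emph{large} height, so state the filtration accordingly; this is a matter of convention, not substance.)

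The genuine gap is the ``moreover'' clause, which is the substantive half of the theorem, and your third paragraph both misstates the mechanism and then defers the proof. It is not true that $\overline{\partial}_\varphi|_{C_i}$ ``only involves $\varphi_{C_i}|_{C_i}$'': the terms $\rho(\varphi_\alpha)|_{C_i}$ for $\alpha\neq C_i$ do not vanish and are never ``absorbed'' in the paper; instead, Lemma \ref{section} solves a full triangular system inductively (using $H^1(C_i,O_{C_i}(l_j))=0$) to produce holomorphic sections of the restricted bundle, and Lemmas \ref{indep}, \ref{indep2}, \ref{indep3}, \ref{indep4} prove these sections are pointwise independent using the explicit Euler-sequence sections. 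Moreover the ``clean pairwise Euler picture'' genuinely fails to be block-diagonal outside type $A$: restricting $\mathfrak{L}^{D_n}$ to $C_n$ the pairs are the interleaved $\{l_{n-1},l_{n+1}\}$ and $\{l_n,l_{n+2}\}$, and one needs the vanishing of the intermediate entry $\eta_{n,n+1}$, which is forced not by root positivity but by $\overline{\partial}_\eta q=0$ (Proposition \ref{q}); the analogous shape conditions in the spinor, $E_6$ and $E_7$ cases (Lemma \ref{indep2}/\ref{indep4}, Propositions \ref{action}, \ref{E6}) likewise come from preservation of $f$ and the branching decompositions (Lemmas \ref{branch1}--\ref{branch4}), and for the non-standard minuscule representations the criterion is obtained via Propositions \ref{Ak} and \ref{action}, not by a direct pairing argument. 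The converse direction (nontriviality when $[\varphi_{C_i}|_{C_i}]=0$) is also only asserted. Since these verifications \emph{are} the paper's proof of the triviality criterion, deferring them means your proposal establishes the holomorphicity, filtration and equivariance statements but not the descent criterion.
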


For $C_{k}$ with $k=1$, the corresponding minuscule representation $V$ is the
standard representation of $\mathfrak{g}$. When $\mathfrak{g}=A_{n}$, it is
simply $sl\left(  n+1\right)  =aut_{0}(V)$. When $\mathfrak{g}=D_{n}$ (resp.
$E_{6}$ and $E_{7}$), there exists a quadratic (resp. cubic and quartic) form
$f$ on $V$ such that $\mathfrak{g}=aut(V,f)$. The next theorem tells us that
we can globalize this construction over $Y$ to recover the Lie algebra bundle
$\mathcal{E}_{\varphi}^{\mathfrak{g}}$ over $Y$. But this does not work for
$\mathcal{E}_{\varphi}^{E_{8}}$ as $E_{8}$ has no standard representation.

\begin{theorem}
\label{thm3}Under the same assumptions as in theorem \ref{thm2} with $k=1$,
there exists a holomorphic fiberwise symmetric multi-linear form%
\[
f:\bigotimes^{r}\mathfrak{L}_{\varphi}^{(\mathfrak{g},V)}\longrightarrow
O_{Y}(D)
\]
with $r=0,2,3,4$ when $\mathfrak{g}=A_{n},D_{n},E_{6},E_{7}$ respectively such
that $\mathcal{E}_{\varphi}^{\mathfrak{g}}\cong aut_{0}(\mathfrak{L}_{\varphi
}^{(\mathfrak{g},V)},f)$.
\end{theorem}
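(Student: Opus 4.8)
\textbf{Proof strategy for Theorem \ref{thm3}.}

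The plan is to construct the multilinear form $f$ first on the model bundle $\mathfrak{L}_0^{(\mathfrak g,V)}=\bigoplus_{l\in I}O(l)$, verify it is $\overline\partial_\varphi$-holomorphic, and then identify $aut_0(\mathfrak{L}_\varphi^{(\mathfrak g,V)},f)$ with $\mathcal{E}_\varphi^{\mathfrak g}$ as holomorphic Lie algebra bundles. First I would recall the purely Lie-theoretic fact (Adler \cite{A}, or a direct weight computation): for $\mathfrak g=A_n$ one has $\mathfrak g=aut_0(V)$ with no form needed ($r=0$); for $\mathfrak g=D_n,E_6,E_7$ the standard representation $V$ carries a $\mathfrak g$-invariant symmetric form $f$ of degree $r=2,3,4$, unique up to scalar, and $\mathfrak g=aut(V,f)$. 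Dualizing to lowest-weight conventions, invariance means $f$ is supported on $r$-tuples of weights $(w_{l_1},\dots,w_{l_r})$ summing to a fixed weight, so in terms of $(-1)$-curves the nonzero coefficients $f(v_{l_1},\dots,v_{l_r})$ occur exactly when $l_1+\cdots+l_r=D'$ for one fixed divisor class; set $D$ to be that class (more precisely $D=rC_0^k+\sum b_iC_i$ read off from the weight sum), so that the term $f(v_{l_1},\dots,v_{l_r})\in O(l_1)\otimes\cdots\otimes O(l_r)=O(D)$ lands in the correct line bundle. This defines a holomorphic section of $\mathrm{Sym}^r(\mathfrak{L}_0^{(\mathfrak g,V)})^\vee\otimes O(D)$ with respect to $\overline\partial_0$, fixing its coefficients to be the constants from the Lie-theoretic invariant form.

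Next I would upgrade $f$ to a form holomorphic for $\overline\partial_\varphi$ rather than $\overline\partial_0$. The deformed operator acts on sections by $\overline\partial_\varphi=\overline\partial_0+\rho(\varphi)$, and on the $O(D)$-valued $r$-form bundle the induced operator is $\overline\partial_0$ twisted by the dual/symmetric-power action; the statement $\overline\partial_\varphi f=0$ is then equivalent to the infinitesimal invariance $\rho(\varphi_\alpha)\cdot f=0$ for every $\alpha\in\Phi^+$, which holds \emph{coefficientwise} because $f$ is already $\mathfrak g$-invariant as an algebraic form and $\rho(\varphi_\alpha)=c_\alpha^U\rho(x_\alpha^U)$ with $\rho(x_\alpha)\cdot f=0$. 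Thus the \emph{same} constant coefficients that define the invariant algebraic form automatically give a $\overline\partial_\varphi$-holomorphic $f$ on $\mathfrak{L}_\varphi^{(\mathfrak g,V)}$ — no new analytic input is needed here beyond Proposition \ref{holomorphic}/Theorem \ref{thm2} giving the holomorphic structures. It remains only to check $f$ is fiberwise nondegenerate in the appropriate sense (which is an open condition preserved under deformation, and holds at a generic point since it holds for the algebraic model), and that it is genuinely symmetric, which is built into the construction.

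Finally I would prove the isomorphism $\mathcal{E}_\varphi^{\mathfrak g}\cong aut_0(\mathfrak{L}_\varphi^{(\mathfrak g,V)},f)$. The action map $\rho:\mathcal{E}_0^{\mathfrak g}\to End(\mathfrak{L}_0^{(\mathfrak g,V)})$ of the Chevalley basis, read off the formulas $[x_\alpha,v_l]=\pm v_{l+\alpha}$ and $[h_i,v_l]=\langle h_i,l\rangle v_l$ from \S3.4, is $O_Y$-linear, injective (since $\rho$ is a faithful representation of the simple $\mathfrak g$ fiberwise), and lands in $aut_0(\mathfrak{L}_0,f)$ fiberwise by the algebraic identity $\mathfrak g=aut_0(V,f)$; counting ranks ($\dim\mathfrak g=\dim aut_0(V,f)$) shows it is a fiberwise isomorphism, hence a bundle isomorphism over $Y$. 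To see it intertwines the \emph{holomorphic} structures, observe $\rho$ sends $\overline\partial_\varphi=\overline\partial_0+ad(\varphi)$ on $\mathcal{E}_0^{\mathfrak g}$ to $\overline\partial_0+[\rho(\varphi),-]$ on $End(\mathfrak{L}_0)$, and by Theorem \ref{thm2} this restricts on $aut_0(\mathfrak{L}_\varphi,f)$ to exactly the natural holomorphic structure induced from $\overline\partial_\varphi$ on $\mathfrak{L}_\varphi$; compatibility with the Lie bracket is Proposition \ref{Lie bracket}. The main obstacle is the case-by-case bookkeeping: for $D_n$ one must match the pairing $q$ with the combinatorics of which pairs $l,l'\in I$ satisfy $l+l'=D$, and for $E_6,E_7$ the cubic form $c$ and quartic form $t$ require verifying that the constants $f(v_{l_1},\dots,v_{l_r})$ coming from the invariant form are consistent with the sign choices $n_{\alpha,w}$ fixed in Proposition \ref{representation} — i.e. that the Chevalley-basis signs and the invariant-form coefficients can be simultaneously normalized. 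This is precisely the content deferred to \S4, \S5, \S6, and I would organize the proof of Theorem \ref{thm3} as an assembly of those case analyses, with $E_8$ excluded since it has no standard representation.
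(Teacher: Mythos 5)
Your proposal is correct in substance, but it runs the argument in the opposite direction from the paper, so it is worth comparing the two. The paper works throughout with a general upper-triangular deformation $\eta$ and defines $\Xi_Y^{\mathfrak g}$ by the three conditions (filtration, $\overline{\partial}_{\eta}^{2}=0$, $\overline{\partial}_{\eta}f=0$); the case-by-case work of \S 4--\S 6 (Propositions \ref{q}, \ref{c}, \ref{t}, with computer verification for $E_6$, $E_7$) shows $\Xi_Y^{\mathfrak g}\neq\emptyset$, and the proof in \S 7 then shows that $\overline{\partial}_{\eta}f=0$ \emph{forces} $\eta_{i,j}$ to be supported on positive-root differences and to agree up to the signs $n_{\alpha,w_i}$, i.e. $\eta=\rho(\varphi)$ for some $\varphi\in\Psi_Y$; consequently $aut(\mathfrak{L}_{\eta},f)$ carries exactly $\overline{\partial}_{0}+\sum_{\alpha}c_{\alpha}ad(x_{\alpha})=\overline{\partial}_{\varphi}$ and equals $\mathcal{E}_{\varphi}^{\mathfrak g}$. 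You instead fix $\varphi\in\Psi_Y$, take the constant-coefficient invariant form on $\bigoplus_{l\in I}O(l)$ (supported on $r$-tuples with $l_{1}+\cdots+l_{r}=D$, with $D=F$, $K'$, $2K'$), observe that $\overline{\partial}_{\varphi}f=0$ reduces coefficientwise to the algebraic invariance $\rho(x_{\alpha})\cdot f=0$, and then identify $\mathcal{E}_{\varphi}^{\mathfrak g}\cong aut_{0}(\mathfrak{L}_{\varphi}^{(\mathfrak g,V)},f)$ via the fiberwise isomorphism $\rho$, which intertwines $ad(\varphi)$ with $[\rho(\varphi),\cdot\,]$. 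Your direction is more economical for Theorem \ref{thm3} as stated and avoids the classification step; what the paper's route buys in exchange is the converse statement that every $\eta\in\Xi_Y^{\mathfrak g}$ arises as $\rho(\varphi)$, which the paper needs to transfer the descent criteria of Propositions \ref{A1}, \ref{D1}, \ref{E6} into the $\varphi$-picture. The genuinely load-bearing point in your version is exactly the one you flag: the coefficients of $q$, $c$, $t$ (determined via \cite{A}, \cite{HT}) must be invariant for the representation $\rho$ with the specific signs $n_{\alpha,w}$ fixed in Proposition \ref{representation}, and this sign compatibility is where the deferred case-by-case work lives in both treatments. One small simplification to your write-up: since the deformation changes only the $\overline{\partial}$-operator and not the underlying smooth bundle or the form, $f$ is literally the fixed algebraic invariant form on every fiber, so its nondegeneracy requires no genericity or openness argument.
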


It is obvious that $\mathcal{E}_{\varphi}^{\mathfrak{g}}$ does not depend on
the existence of the $(-1)$-curve $C_{0}$, for the minuscule representation
bundles, we have the following results.

\begin{theorem}
\label{thm4}There exists a divisor $B$ in $Y$ and an integer $k$, such that
the bundle $\mathbb{L}_{\mathfrak{\varphi}}^{(\mathfrak{g},V)}:=S^{k}%
\mathfrak{L}_{\mathfrak{\varphi}}^{(\mathfrak{g},V)}\otimes O(-B)$ with
$\varphi\in\Psi_{X}$ can descend to $X$ and does not depend on the existence
of $C_{0}$.
\end{theorem}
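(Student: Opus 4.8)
The plan is to make the bundle $\mathfrak{L}_{\varphi}^{(\mathfrak{g},V)}$ descend after a twist, exploiting the fact that the obstruction to descent is a \emph{numerical} one localized on the exceptional curves $C_i$. By Theorem \ref{thm2}, for $\varphi\in\Psi_X$ the restriction $\mathfrak{L}_{\varphi}^{(\mathfrak{g},V)}|_{C_i}$ is trivial for every $i$, so $\mathfrak{L}_{\varphi}^{(\mathfrak{g},V)}$ \emph{does} descend to $X$ by the Friedman--Morgan criterion; the only trouble is that the isomorphism class of the descended bundle, and indeed the very definition of $I=I^{(\mathfrak{g},V)}$, depends on the existence of the $(-1)$-curve $C_0$ in $X$. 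So the real content of the theorem is to produce a canonical twist $S^k\mathfrak{L}_{\varphi}^{(\mathfrak{g},V)}\otimes O(-B)$ whose restriction to each $C_i$ is trivial \emph{and} whose construction only uses the root-lattice data $\Lambda$ (equivalently, the singularity type of $X$), not the auxiliary curve $C_0$.

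First I would analyze $S^k\mathfrak{L}_{0}^{(\mathfrak{g},V)}|_{C_i}$ using the description already established in the excerpt: $\mathfrak{L}_{0}^{(\mathfrak{g},V)}|_{C_i}\cong O_{\mathbb{P}^1}^{\oplus m}\oplus(O_{\mathbb{P}^1}(1)\oplus O_{\mathbb{P}^1}(-1))^{\oplus n}$, where the $(\pm1)$-pairs come from pairs $l,l+C_i\in I$. Taking the $k$-th symmetric power and then restricting, the line bundle summands of $S^k\mathfrak{L}_{0}^{(\mathfrak{g},V)}|_{C_i}$ all have degree between $-k$ and $k$. The key observation is that there should be a uniform choice of $k$ (depending only on $\mathfrak{g}$ and $V$, in fact $k$ can be taken to be the largest $|{\rm wt}|$-type quantity, or simply $k=\dim$-independent small integer read off from the weights) and a divisor $B$ supported on $C_0^k+\Lambda$ — more precisely $B=k\,C_0^k+\sum b_iC_i$ with the $b_i$ determined by requiring $\deg\big((S^k O(l_{\min}))\otimes O(-B)\big)|_{C_i}=0$ for the lowest-weight summand — so that after the twist $O(-B)$ every summand of $S^kO(l)\otimes O(-B)$ restricted to $C_i$ has degree in a symmetric band around $0$, and the extreme summands cancel in pairs exactly as before. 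I would verify case by case ($A_n$, $D_n$, $E_6$, $E_7$, and $E_8$ via the $A_7$/$D_7$ reductions already used in the paper) that one can choose $(k,B)$ making $(\mathbb{L}_{\varphi}^{(\mathfrak{g},V)})|_{C_i}$ trivial; this is the step where the combinatorics of weights enters, and it is the main obstacle — one must check that the twist does not destroy triviality in the zero-degree summands while forcing it in the $(\pm1)$ ones, and that a single $B$ works simultaneously for all $i$.

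Next I would address the independence of $C_0$. The curve $C_0^k\in {\rm Pic}(Y)$ is, by construction, a class with $C_0^k\cdot K_Y=-1$ and $C_0^k\cdot C_i=\delta_{ik}$ (the minuscule multiplicity condition); two different choices of $C_0$ differ by an element of $H^2(X,\mathbb{Z})$, i.e. a class pulled back from $X$ orthogonal to all $C_i$. Under such a change $l\mapsto l+\pi^*D'$ for every $l\in I$, so $\mathfrak{L}_{\varphi}^{(\mathfrak{g},V)}$ changes by $\otimes\,\pi^*O_X(D')$; hence $S^k\mathfrak{L}_{\varphi}^{(\mathfrak{g},V)}$ changes by $\otimes\,\pi^*O_X(kD')$. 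Choosing $B$ to transform correspondingly — i.e. absorbing the $k\,C_0^k$ part of $B$ so that $S^k\mathfrak{L}\otimes O(-B)$ is built only from the differences $l-l'$ and from $C_1,\dots,C_n$, all of which live in $\Lambda$ — makes $\mathbb{L}_{\varphi}^{(\mathfrak{g},V)}$ manifestly independent of the choice of $C_0$ (and, since $X$ might not contain any such $C_0$ at all, one checks that the final formula makes sense using only $\Lambda$ and a chosen minuscule node $C_k$). Finally, descent: since $\mathbb{L}_{\varphi}^{(\mathfrak{g},V)}|_{C_i}$ is trivial for all $i$ and $p_g(X)=0$ (so the obstruction in $H^1$ vanishes as in Lemma \ref{surj} and the proof of Proposition \ref{holomorphic}), Friedman--Morgan \cite{FM} gives a bundle on $X$ whose pullback is $\mathbb{L}_{\varphi}^{(\mathfrak{g},V)}$; uniqueness of the descent (again from $\pi_*O_Y=O_X$, $R^1\pi_*O_Y=0$ for a rational singularity) shows the descended bundle is well-defined. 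I expect the weight-combinatorics verification in the $E_6$, $E_7$ cases — matching the cubic/quartic form's homogeneity with the choice of $k$ — to be the most delicate point, but it parallels the computations already carried out in \S 5 and \S 6.
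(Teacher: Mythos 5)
Your overall strategy is the paper's: twist $S^{k}\mathfrak{L}_{\varphi}^{(\mathfrak{g},V)}$ by $O(-B)$ with $B=k\widetilde{C_0}+\sum_i b_iC_i$, chosen so that the result descends and so that the $k\widetilde{C_0}$ in $B$ cancels the $C_0$-parts of the summands, leaving only classes in $\Lambda$ and a holomorphic structure built from the $\varphi_\alpha$, $\alpha\in\Phi^{+}$ (whence independence of $C_0$). The genuine gap is in how you pin down $B$. Your rule --- choose the $b_i$ by requiring the lowest-weight summand $S^{k}O(l_{\min})\otimes O(-B)$ to have degree $0$ on every $C_i$ --- determines the wrong divisor: with $l_{\min}=C_0^{k}$ one has $kC_0^{k}-B=-\sum_j b_jC_j$, so the condition $(kC_0^{k}-B)\cdot C_i=0$ for all $i$ forces $b_j=0$ by negative definiteness of the intersection form on $\Lambda$, i.e.\ $B=k\widetilde{C_0}$. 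But then $B\cdot C_k=k\neq0$, so $O(B)|_{C_k}$ is non-trivial and $\bigl(S^{k}\mathfrak{L}_{\varphi}\otimes O(-B)\bigr)|_{C_k}\cong S^{k}\bigl(\mathfrak{L}_{\varphi}|_{C_k}\bigr)\otimes O_{\mathbb{P}^1}(-k)$ is a direct sum of copies of $O_{\mathbb{P}^1}(-k)$; the twisted bundle cannot descend.

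Relatedly, you misplace the difficulty: there is no delicate interplay with the $O_{\mathbb{P}^1}(\pm1)$-pairs after twisting, because tensoring by a line bundle shifts all restriction degrees on $C_i$ by the same amount $-B\cdot C_i$ and can never turn a non-trivial restriction into a trivial one. The only requirements on $B$ are (i) its $\widetilde{C_0}$-coefficient $k$ is nonzero, and (ii) $B\cdot C_i=0$ for every $i$, i.e.\ $O(B)$ itself descends; given (ii), triviality of $\bigl(S^{k}\mathfrak{L}_{\varphi}\otimes O(-B)\bigr)|_{C_i}$ is immediate from Theorem \ref{thm2}, and descent follows from the Friedman--Morgan criterion as you say. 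Since $\widetilde{C_0}\cdot C_i=\delta_{ik}$, condition (ii) means solving $\sum_j b_j\,C_j\cdot C_i=-k\,\delta_{ik}$ over $\mathbb{Z}$; as $(C_j\cdot C_i)$ is minus the Cartan matrix, integral solutions exist once $k$ is chosen to clear the denominators of its inverse, and this is exactly what the paper does by exhibiting $(k,B)$ explicitly case by case ($k=n+1$ for $(A_n,\mathbb{C}^{n+1})$, $k=2$ for $(D_n,\mathbb{C}^{2n})$, $k=4$ for $(D_n,\mathcal{S}^{+})$, $k=3$ for $E_6$, $k=2$ for $E_7$). Note also that Theorem \ref{thm4} concerns minuscule representation bundles, so there is no $E_8$ case to reduce to $A_7/D_7$. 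With your normalization of $B$ replaced by (i)--(ii) plus this integrality check, the rest of your argument (independence of $C_0$, uniqueness of descent) goes through and coincides with the paper's.
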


\subsection{Outline of Proofs for $\mathfrak{g\neq}E_{8}$}

When $\mathfrak{g\neq}E_{8}$, there exists a natural symmetric tensor $f$ on
its standard representation $V$ such that $\mathfrak{g}=aut_{0}(V,f)$. The set
$I^{(\mathfrak{g},\ V)}$ of $(-1)$-curves has cardinality $N=\dim V$. Given
$\mathfrak{\eta:=}(\mathfrak{\eta}_{i,j})_{N\times N}$ with $\mathfrak{\eta
}_{i,j}\in\Omega^{0,1}(Y,O(l_{i}-l_{j}))$ for every $l_{i}\neq l_{j}\in
I^{(\mathfrak{g},\ V)}$, we consider the operator $\overline{\partial
}_{\mathfrak{\eta}}:=\overline{\partial}_{0}+\eta$ on $\mathfrak{L}%
_{0}^{(\mathfrak{g},\ V)}:=\bigoplus_{l\in I^{(\mathfrak{g},\ V)}}O_{Y}(l)$.
We will look for $\eta$ which satisfy:

(1) (filtration) $\mathfrak{\eta}_{i,j}=0$ for $i>j$ for the partial ordering
introduced in $\S 3.4$.

(2) (holomorphic structure) $(\overline{\partial}_{0}+\mathfrak{\eta)}^{2}=0$.

(3) (Lie algebra structure) $\overline{\partial}_{\mathfrak{\eta}}f=0$.

(4) (descendent) For every $C_{k}$, if $l_{i}-l_{j}=C_{k}$, then $0\neq
\lbrack\mathfrak{\eta}_{i,j}|_{C_{k}}]\in H^{1}(Y,O_{C_{k}}(C_{k}))$.

\begin{remark}
Property $(2)$ implies that we can define a new holomorphic structure on
$\mathfrak{L}_{0}^{(\mathfrak{g},\ V)}$. Properties $(1)$ and $(3)$ require
that for any $\mathfrak{\eta}_{i,j}\neq0$, $\mathfrak{\eta}_{i,j}\in
\Omega^{0,1}(Y,O(\alpha))$ for some $\alpha\in\Phi^{+}$. We will show that if
$\mathfrak{\eta}$ satisfies (1), (2) and (3), then (4) is equivalent to
$\mathfrak{L}_{\mathfrak{\eta}}^{(\mathfrak{g},\ V)}$ being trivial on every
$C_{k}$, i.e. $\mathfrak{L}_{\mathfrak{\eta}}^{(\mathfrak{g},\ V)}$ can
descend to $X$.
\end{remark}

Denote
\[
\Xi_{Y}^{\mathfrak{g}}\triangleq\{\mathfrak{\eta=}(\mathfrak{\eta}%
_{i,j})_{N\times N}|\mathfrak{\eta}\text{ satisfies (1), (2) and
(3)}\}\text{,}%
\]
and%
\[
\Xi_{X}^{\mathfrak{g}}\triangleq\{\mathfrak{\eta\in}\Xi_{Y}^{\mathfrak{g}%
}|\mathfrak{\eta}\text{ satisfies (4)}\}\text{,}%
\]
then each $\mathfrak{\eta}$ in $\Xi_{Y}^{\mathfrak{g}}$ determines a filtered
holomorphic bundle $\mathfrak{L}_{\mathfrak{\eta}}^{(\mathfrak{g},\ V)}$ over
$Y$ together with a holomorphic tensor $f$ on it. It can descend to $X$ if
$\mathfrak{\eta}\in\Xi_{X}^{\mathfrak{g}}$.

\bigskip

Since $\mathfrak{g}=aut(V,f)$, for any $\mathfrak{\eta\in}\Xi_{Y}%
^{\mathfrak{g}}$, we have a holomorphic Lie algebra bundle $\mathcal{\zeta
}_{\mathfrak{\eta}}^{\mathfrak{g}}:=aut(\mathfrak{L}_{\mathfrak{\eta}%
}^{(\mathfrak{g},\ V)},f)$ over $Y$ of type $\mathfrak{g}$, and $\mathfrak{L}%
_{\mathfrak{\eta}}^{(\mathfrak{g},\ V)}$ is automatically a representation
bundle of $\mathcal{\zeta}_{\mathfrak{\eta}}^{\mathfrak{g}}$. Furthermore, if
$\mathfrak{\eta\in}\Xi_{X}^{\mathfrak{g}}$, then $\mathcal{\zeta
}_{\mathfrak{\eta}}^{\mathfrak{g}}$ can descend to $X$.

For a general minuscule representation of $\mathfrak{g}$, given any
$\mathfrak{\eta\in}\Xi_{Y}^{\mathfrak{g}}$, we show that there exists a unique
holomorphic structure on $\mathfrak{L}_{0}^{(\mathfrak{g},V)}$, such that the
action of $\mathcal{\zeta}_{\mathfrak{\eta}}^{\mathfrak{g}}$ on the new
holomorphic bundle $\mathfrak{L}_{\mathfrak{\eta}}^{(\mathfrak{g},V)}$ is
holomorphic. Furthermore, if $\mathfrak{\eta\in}\Xi_{X}^{\mathfrak{g}}$, then
$\mathfrak{L}_{\mathfrak{\eta}}^{(\mathfrak{g},V)}$ can descend to $X$.

\section{$A_{n}$ case}

We recall that $A_{n}=sl(n+1,\mathbb{C)=}aut_{0}(\mathbb{C}^{n+1})$ (where
$aut_{0}$ means tracefree endomorphisms). The standard representation of
$A_{n}$ is $\mathbb{C}^{n+1}$ and minuscule representations of $A_{n}$ are
$\wedge^{k}\mathbb{C}^{n+1}$, $k=1,2,\cdots,n$.

\subsection{$A_{n}$ standard representation bundle $\mathfrak{L}%
_{\mathfrak{\eta}}^{(A_{n},\mathbb{C}^{n+1})}$}

We consider a surface $X$ with an $A_{n}$ singularity $p$ and a $(-1)$-curve
$C_{0}$ passing through $p$ with multiplicity $C_{1}$, then $I^{(A_{n}%
,\mathbb{C}^{n+1})}=\{C_{0}^{1}+\sum_{i=1}^{k}C_{i}|$ $0\leq k\leq n\}$ has
cardinality $n+1$. We order these $(-1)$-curves: $l_{k}=C_{0}^{1}+\sum
_{i=1}^{n+1-k}C_{i}$ for $1\leq k\leq n+1$. For any $l_{i}\neq l_{j}\in I$,
$l_{i}\cdot l_{j}=0$. Fix any $C_{i}$, we have%
\[
l_{k}\cdot C_{i}=\left\{
\begin{tabular}
[c]{cc}%
$1,$ & $k=n+2-i$\\
$-1,$ & $k=n+1-i$\\
$0,$ & $\text{otherwise.}$%
\end{tabular}
\ \ \ \right.
\]

Define $\mathfrak{L}_{0}^{(A_{n},\mathbb{C}^{n+1})}:=\bigoplus_{l\in I}O(l)$
over $Y$, for simplicity, we write it as $\mathfrak{L}_{0}^{A_{n}}$.
$\mathfrak{L}_{0}^{A_{n}}$ can not descend to $X$, since for any $C_{i}$,%
\[
\mathfrak{L}_{0}^{A_{n}}|_{C_{i}}\cong O_{\mathbb{P}^{1}}^{\oplus(n-1)}\oplus
O_{\mathbb{P}^{1}}(1)\oplus O_{\mathbb{P}^{1}}(-1)\text{.}%
\]

Our aim is to find a new holomorphic structure on $\mathfrak{L}_{0}^{A_{n}}$
such that the resulting bundle can descend to $X$. First, we define
$\overline{\partial}_{\mathfrak{\eta}}:\Omega^{0,0}(Y,\mathfrak{L}_{0}^{A_{n}%
})\longrightarrow\Omega^{0,1}(Y,\mathfrak{L}_{0}^{A_{n}})$ on $\mathfrak{L}%
_{0}^{A_{n}}=\bigoplus_{k=1}^{n+1}O(l_{k})$ as follows:%
\[
\overline{\partial}_{\mathfrak{\eta}}=\left(
\begin{array}
[c]{cccc}%
\overline{\partial} & \eta_{1,2} & \cdots & \eta_{1,n+1}\\
0 & \overline{\partial} & \cdots & \eta_{2,n+1}\\
\vdots & \vdots & \ddots & \vdots\\
0 & 0 & \cdots & \overline{\partial}%
\end{array}
\right)
\]
where $\eta_{i,j}\in\Omega^{0,1}(Y,O(l_{i}-l_{j}))$ for any $j>i$. When $j>i$,
$l_{i}-l_{j}\in\Lambda$ is a positive root because of $l_{i}\cdot l_{j}=0$ and
our ordering of $l_{k}$'s.

The integrability condition $\overline{\partial}_{\mathfrak{\eta}}^{2}=0$ is
equivalent to, for $i=1,2,\cdots,n$,%
\[
\left\{
\begin{array}
[c]{l}%
\overline{\partial}\eta_{i,i+1}=0,\\
\overline{\partial}\eta_{i,j}=-\sum_{m=i+1}^{j-1}\mathfrak{\eta}%
_{i,m}\mathfrak{\cdot\mathfrak{\eta}}_{m,j}\text{, }~j\geq i+2\text{,}%
\end{array}
\right.
\]

Note $\eta_{i,j}\in\Omega^{0,1}(Y,O(l_{i}-l_{j}))=\Omega^{0,1}(Y,O(\alpha))$
for some $\alpha\in\Phi^{+}$. From%
\[
\sum_{m=i+1}^{j-1}[\mathfrak{\eta}_{i,m}\mathfrak{\cdot\mathfrak{\eta}}%
_{m,j}]\in H^{2}(Y,O(l_{i}-l_{j}))=0\text{,}%
\]
we can find $\eta_{i,j}$, such that $\overline{\partial}\eta_{i,j}%
=-\sum_{m=i+1}^{j-1}\mathfrak{\eta}_{i,m}\mathfrak{\cdot\mathfrak{\eta}}%
_{m,j}$. That is

\begin{proposition}
\label{holo2}Given any $\eta_{i,i+1}\in\Omega^{0,1}(Y,O(l_{i}-l_{i+1}))$ with
$\overline{\partial}\eta_{i,i+1}=0$ for $i=1,2,\cdots n$, there exists
$\eta_{i,j}\in\Omega^{0,1}(Y,O(l_{i}-l_{j}))$ for every $j>i$ such that
$\overline{\partial}_{\mathfrak{\eta}}$ defines a holomorphic structure on
$\mathfrak{L}_{0}^{A_{n}}$, i.e. $\overline{\partial}_{\mathfrak{\eta}}^{2}=0$.
\end{proposition}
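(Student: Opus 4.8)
The plan is to construct the off-diagonal entries $\eta_{i,j}$ by induction on $j-i$, exactly mirroring the argument used in Proposition \ref{holomorphic}. For the base case $j-i=1$, the hypothesis already provides $\overline{\partial}$-closed entries $\eta_{i,i+1}$, so there is nothing to do. Suppose that for some $d\geq 2$ all entries $\eta_{i,j}$ with $j-i<d$ have been chosen so that the relevant integrability equations hold. Fix a pair $(i,j)$ with $j-i=d$ and consider the prescribed right-hand side $\omega_{i,j}:=-\sum_{m=i+1}^{j-1}\eta_{i,m}\cdot\eta_{m,j}$. Since $l_i-l_m$ and $l_m-l_j$ are positive roots (by $l_i\cdot l_j=0$ together with the chosen ordering of the $l_k$'s), each product $\eta_{i,m}\cdot\eta_{m,j}$ is a $(0,2)$-form valued in $O(l_i-l_j)=O(\alpha)$ for the positive root $\alpha=l_i-l_j$, so $\omega_{i,j}\in\Omega^{0,2}(Y,O(\alpha))$.

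The key point is to verify that $\omega_{i,j}$ is $\overline{\partial}$-closed, so that it defines a class in $H^{2}(Y,O(\alpha))$, which vanishes by Lemma \ref{cohomology2} (here $\alpha=l_i-l_j$ with $l_i,l_j\in I$). To see $\overline{\partial}\,\omega_{i,j}=0$, apply $\overline{\partial}$ term by term; each $\overline{\partial}\eta_{i,m}$ and $\overline{\partial}\eta_{m,j}$ is rewritten using the induction hypothesis as a sum of products of strictly shorter entries, and the resulting triple sum $\sum_{i<m<m'<j}\bigl(\eta_{i,m}\eta_{m,m'}\eta_{m',j}-\eta_{i,m'}\eta_{m',m}\eta_{m,j}+\cdots\bigr)$ cancels in pairs after reindexing — this is the associativity-type identity for strictly upper-triangular matrices, and it is the analogue of the Jacobi-identity cancellation in the proof of Proposition \ref{holomorphic}. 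Consequently $[\omega_{i,j}]=0\in H^{2}(Y,O(l_i-l_j))=0$, and we may pick $\eta_{i,j}\in\Omega^{0,1}(Y,O(l_i-l_j))$ with $\overline{\partial}\eta_{i,j}=\omega_{i,j}$.

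Finally, once all $\eta_{i,j}$ have been chosen in this way, the computation of $\overline{\partial}_{\eta}^{2}$ on $\mathfrak{L}_{0}^{A_{n}}=\bigoplus_{k}O(l_k)$ shows that its $(i,j)$ entry is precisely $\overline{\partial}\eta_{i,j}+\sum_{m=i+1}^{j-1}\eta_{i,m}\eta_{m,j}$, which vanishes by construction; the diagonal and lower-triangular entries vanish automatically since $\overline{\partial}^{2}=0$ and $\eta$ is strictly upper triangular. Hence $\overline{\partial}_{\eta}^{2}=0$ and $\overline{\partial}_{\eta}$ defines a holomorphic structure on $\mathfrak{L}_{0}^{A_{n}}$.

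I expect the only genuine obstacle to be the bookkeeping in the closedness step: making sure the triple products cancel correctly after substituting the induction hypothesis, i.e. checking that the combinatorial identity $\sum n_{\cdot,\cdot}n_{\cdot,\cdot}=0$ (here with all structure constants equal to $1$ because the entries are genuine matrix blocks, not abstract Chevalley generators) really does force $\overline{\partial}\,\omega_{i,j}=0$. Everything else — the identification of $l_i-l_j$ as a positive root, the vanishing $H^{2}(Y,O(l_i-l_j))=0$, and the final $\overline{\partial}_{\eta}^{2}=0$ computation — is immediate from the results already established (the ordering convention in \S 3.4, Lemma \ref{cohomology2}, and the matrix form of $\overline{\partial}_{\eta}$).
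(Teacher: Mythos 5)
Your proof is correct and follows essentially the same route as the paper: solve $\overline{\partial}\eta_{i,j}=-\sum_{m}\eta_{i,m}\eta_{m,j}$ inductively, using that $l_i-l_j$ is a positive root and $H^2(Y,O(l_i-l_j))=0$. The only difference is that you spell out the $\overline{\partial}$-closedness of the right-hand side (the triple-product cancellation), which the paper leaves implicit since it is the same computation as in Proposition \ref{holomorphic}; your verification of it is correct.
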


We want to prove that there exists $\mathfrak{\eta\in}\Xi_{Y}^{A_{n}}$ such
that $\mathfrak{L}_{\mathfrak{\eta}}^{A_{n}}$ can descend to $X$, i.e.
$\mathfrak{L}_{\mathfrak{\eta}}^{A_{n}}|_{C_{i}}$ is trivial for every $C_{i}%
$. To prove this, we will construct $n+1$ holomorphic sections of
$\mathfrak{L}_{\mathfrak{\eta}}^{A_{n}}|_{C_{i}}$ which are linearly
independent everywhere on $C_{i}$. The following lemma will be needed for all
the $ADE$ cases.

\begin{lemma}
\label{section}Consider a vector bundle $(\mathfrak{L}:=\bigoplus_{i=1}%
^{N}O(l_{i}),$ $\overline{\partial}_{\mathfrak{L}}=\overline{\partial}%
_{0}+(\eta_{i,j})_{N\times N})$ over $Y$ with $\eta_{i,j}=0$ whenever $i\geq
j$. Suppose $C$ is a smooth $(-2)$-curve in $Y$ with $H^{1}(C,O_{C}(l_{i}))=0$
for every $i=1,2,\cdots N$, then for any fixed $i$ and any $s_{i}\in
H^{0}(C,O_{C}(l_{i}))$, the following equation for $s_{1}$, $s_{2}$, $\cdots$
$s_{i-1}$ has a solution,%
\[
\left(
\begin{array}
[c]{cccccc}%
\overline{\partial} & \eta_{1,2}|_{C} & \eta_{1,3}|_{C} & \cdots & \cdots &
\eta_{1,N}|_{C}\\
0 & \overline{\partial} & \eta_{2,3}|_{C} & \cdots & \cdots & \eta_{2,N}%
|_{C}\\
0 & 0 & \overline{\partial} & \cdots & \cdots & \eta_{3,N}|_{C}\\
\vdots & \vdots & \vdots & \ddots & \ddots & \vdots\\
\vdots & \vdots & \vdots & \ddots & \ddots & \vdots\\
0 & 0 & 0 & \cdots & \cdots & \overline{\partial}%
\end{array}
\right)  \left(
\begin{array}
[c]{c}%
s_{1}\\
\vdots\\
s_{i}\\
0\\
\vdots\\
0
\end{array}
\right)  =0\text{.}%
\]

\begin{proof}
The above equation is equivalent to:%
\begin{equation}
\overline{\partial}s_{i}=0\text{,} \tag{ $1$}%
\end{equation}%
\begin{equation}
\eta_{i-1,i}s_{i}+\overline{\partial}s_{i-1}=0\text{,} \tag{ $2$}%
\end{equation}%
\[
\vdots
\]%
\begin{equation}
\eta_{1,i}s_{i}+\cdots+\eta_{1,2}s_{2}+\overline{\partial}s_{1}=0\text{.}
\tag{ $i$}%
\end{equation}

Equation (1) is automatic as $s_{i}\in H^{0}(C,O_{C}(l_{i}))$. For equation
(2), since $\overline{\partial}\eta_{i-1,i}=0$ and $\overline{\partial}%
s_{i}=0$, we have $[\eta_{i-1,i}s_{i}]\in H^{1}(C,O_{C}(l_{i-1}))=0$, hence we
can find $s_{i-1}$ satisfying $\overline{\partial}s_{i-1}=-\eta_{i-1,i}s_{i}$.

Inductively, suppose we have found $s_{i},\cdots,s_{j-1}$ for the first
$(i-j)$ equations, then for the $(i-j+1)$-th equation: $\eta_{j,i}s_{i}%
+\cdots+\eta_{j,j+1}s_{j+1}+\overline{\partial}s_{j}=0$, we have
\[
\eta_{j,i}s_{i}+\cdots+\eta_{j,j+1}s_{j+1}\in\Omega^{0,1}(C,O_{C}%
(l_{j}))\text{.}%
\]
From $\overline{\partial}_{\mathfrak{L}}^{2}=0$, we have
\[
\overline{\partial}\eta_{k,m}=-(\eta_{k,k+1}\cdot\eta_{k+1,m}+\eta
_{k,k+2}\cdot\eta_{k+2,m}+\cdots+\eta_{k,m-1}\cdot\eta_{m-1,m})\text{.}%
\]
Then
\[
\overline{\partial}(s_{m})=-(\eta_{m,m+1}s_{m+1}+\cdots+\eta_{m,i}s_{i})
\]
implies
\[
\overline{\partial}(\eta_{j,i}s_{i}+\cdots+\eta_{j,j+1}s_{j+1})=0
\]
Therefore $[\eta_{j,i}s_{i}+\cdots+\eta_{j,j+1}s_{j+1}]\in H^{1}(C,O_{C}%
(l_{j}))=0$, hence we can find $s_{j}$ such that $\overline{\partial}%
s_{j}=-(\eta_{j,i}s_{i}+\cdots+\eta_{j,j+1}s_{j+1})$.
\end{proof}
\end{lemma}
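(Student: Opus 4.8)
The plan is to prove Lemma \ref{section} by translating the single matrix equation into the triangular system of $i$ scalar equations displayed in the statement and then solving them from the bottom up, one at a time, using the vanishing hypothesis $H^{1}(C,O_{C}(l_{j}))=0$ at each stage. The first equation $\overline{\partial}s_{i}=0$ holds for free since we are given $s_{i}\in H^{0}(C,O_{C}(l_{i}))$; this is the base case. The inductive step is where all the content lies: assuming $s_{i},s_{i-1},\dots,s_{j+1}$ have already been constructed so that the first $i-j$ equations hold, we must produce $s_{j}$ solving $\overline{\partial}s_{j}=-(\eta_{j,j+1}s_{j+1}+\cdots+\eta_{j,i}s_{i})$. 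Since the right-hand side is a $(0,1)$-form valued in $O_{C}(l_{j})$, and $H^{1}(C,O_{C}(l_{j}))=0$ by hypothesis, such an $s_{j}$ exists provided the right-hand side is $\overline{\partial}$-closed, which is automatic on a curve but we should still exhibit why its class vanishes — it lies in $H^{1}(C,O_{C}(l_j))=0$, so any $(0,1)$-form there is $\overline\partial$-exact.

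The step I expect to be the genuine obstacle — or at least the step requiring real care rather than boilerplate — is verifying the consistency condition that makes the induction go through, namely that $\overline{\partial}\bigl(\eta_{j,j+1}s_{j+1}+\cdots+\eta_{j,i}s_{i}\bigr)=0$. Here is how I would handle it: expand using the Leibniz rule, $\overline{\partial}(\eta_{j,m}s_{m})=(\overline{\partial}\eta_{j,m})s_{m}-\eta_{j,m}(\overline{\partial}s_{m})$ (with the sign coming from $\eta_{j,m}$ being a $(0,1)$-form). Substitute for $\overline{\partial}\eta_{j,m}$ using the integrability relation $\overline{\partial}\eta_{j,m}=-\sum_{j<k<m}\eta_{j,k}\eta_{k,m}$ coming from $\overline{\partial}_{\mathfrak{L}}^{2}=0$ restricted to $C$, and substitute for $\overline{\partial}s_{m}$ using the already-established equations $\overline{\partial}s_{m}=-\sum_{m<k\le i}\eta_{m,k}s_{k}$. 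After collecting terms, every product $\eta_{j,k}\eta_{k,m}s_{m}$ appears exactly twice with opposite signs (once from the $\overline{\partial}\eta$ substitution, once from the $\overline{\partial}s$ substitution), so the whole expression telescopes to zero. This is the bookkeeping heart of the argument and deserves to be spelled out, though it is ultimately routine.

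Once consistency is established, each $s_{j}$ is obtained by solving a $\overline{\partial}$-equation on $C$, and the induction terminates after producing $s_{1}$, which is precisely the last ($i$-th) equation of the system. I would note at the outset that the hypothesis $H^{1}(C,O_{C}(l_{i}))=0$ for all $i$ is used only for the indices $j<i$ that actually arise, and that since $C$ is a smooth $(-2)$-curve, i.e. $C\cong\mathbb{P}^{1}$, this vanishing amounts to $O_{C}(l_{i})\cong O_{\mathbb{P}^{1}}(d)$ with $d\ge -1$ — which will be checked case by case when the lemma is applied. The proof itself, however, needs nothing beyond the triangular (filtration) shape of $\overline{\partial}_{\mathfrak{L}}$, the integrability $\overline{\partial}_{\mathfrak{L}}^{2}=0$, and the stated cohomology vanishing; I would present it exactly in the order: (i) rewrite as the scalar system, (ii) dispatch equations $(1)$ and $(2)$, (iii) run the induction with the telescoping computation for the closedness of the right-hand side, (iv) conclude.
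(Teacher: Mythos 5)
Your proposal is correct and follows essentially the same route as the paper: rewrite the matrix equation as the triangular scalar system, solve from the bottom up, and at each stage use the integrability relation $\overline{\partial}\eta_{j,m}=-\sum_{j<k<m}\eta_{j,k}\eta_{k,m}$ together with the already-solved equations to see the right-hand side is $\overline{\partial}$-closed, then invoke $H^{1}(C,O_{C}(l_{j}))=0$ to solve for $s_{j}$. Your added remark that closedness is automatic on a curve (so the Dolbeault class vanishing is the real point) is a fair observation, but the substance and structure of the argument coincide with the paper's proof.
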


Let us recall a standard result which says that the only non-trivial extension
of $O_{\mathbb{P}^{1}}(1)$ by $O_{\mathbb{P}^{1}}(-1)$ is the trivial bundle.
We will give an explicit construction of this trivialization as we will need a
generalization of it later.

\begin{lemma}
\label{indep}For an exact sequence over $\mathbb{P}^{1}:0\rightarrow
O_{\mathbb{P}^{1}}(-1)\rightarrow E\rightarrow O_{\mathbb{P}^{1}%
}(1)\rightarrow0$, the bundle $E$ is determined by the extension class
$[\varphi]\in Ext_{\mathbb{P}^{1}}^{1}(O(1),O(-1))\cong\mathbb{C}$ up to a
scalar multiple. If $[\varphi]\neq0$, $E$ is trivial, namely there exists two
holomorphic sections for $E$ which are linearly independent at every point in
$\mathbb{P}^{1}$.

\begin{proof}
With respect to the $($topological$)$ splitting $E=O_{\mathbb{P}^{1}%
}(-1)\oplus O_{\mathbb{P}^{1}}(1)$, the holomorphic structure on $E$ is given
by
\[
\overline{\partial}_{E}=\left(
\begin{array}
[c]{cc}%
\overline{\partial} & \varphi\\
0 & \overline{\partial}%
\end{array}
\right)
\]
with $\varphi\in Ext_{\mathbb{P}^{1}}^{1}(O(1),O(-1)))$. Let $t_{1},t_{2}\ $be
a base of $H^{0}(\mathbb{P}^{1},O(1))\cong\mathbb{C}^{2}$. Since $[\varphi
t_{i}]\in H^{1}(\mathbb{P}^{1},O(-1))=0$, we can find $u_{1},u_{2}\in
\Omega^{0}(\mathbb{P}^{1},O(-1))$, such that
\[
\left(
\begin{array}
[c]{cc}%
\overline{\partial} & \varphi\\
0 & \overline{\partial}%
\end{array}
\right)  \cdot\left(
\begin{array}
[c]{c}%
u_{i}\\
t_{i}%
\end{array}
\right)  =0\text{,}%
\]
i.e. $s_{1}=(u_{1},t_{1})^{t}$ and $s_{2}=(u_{2},t_{2})^{t}$ are two
holomorphic sections of $E$. Explicitly, we can take $s_{1}=(\frac
{1}{1+|z|^{2}},z)^{t}$, $s_{2}=(\frac{-\overline{z}}{1+|z|^{2}},1)^{t}$ in the
coordinate chart $\mathbb{C\subset P}^{1}$. It can be checked that $s_{1}$ and
$s_{2}$ are linearly independent over $\mathbb{P}^{1}$.
\end{proof}
\end{lemma}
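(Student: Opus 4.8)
The plan is to establish the standard fact that a non-split extension of $O_{\mathbb{P}^1}(1)$ by $O_{\mathbb{P}^1}(-1)$ is the trivial bundle $O_{\mathbb{P}^1}^{\oplus 2}$, but to do so by writing down the trivializing sections explicitly in Dolbeault terms, since it is exactly this explicit formula that will be reused in the $D_n$, $E_6$, $E_7$ arguments. First I would fix the topological splitting $E = O_{\mathbb{P}^1}(-1)\oplus O_{\mathbb{P}^1}(1)$ so that the holomorphic structure on $E$ is $\overline{\partial}_E = \left(\begin{smallmatrix}\overline{\partial} & \varphi\\ 0 & \overline{\partial}\end{smallmatrix}\right)$ for a representative $\varphi\in\Omega^{0,1}(\mathbb{P}^1, O(-2))$ of the extension class in $\mathrm{Ext}^1_{\mathbb{P}^1}(O(1),O(-1))\cong H^1(\mathbb{P}^1, O(-2))\cong\mathbb{C}$. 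The one-dimensionality of the Ext group gives the ``determined up to a scalar'' assertion immediately; scaling $\varphi$ by a nonzero constant just rescales one of the sections constructed below, so without loss of generality I may take $\varphi$ to be any convenient nonzero representative.

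Next I would produce the holomorphic sections. A holomorphic section of $E$ is a pair $(u,t)$ with $t\in\Omega^0(\mathbb{P}^1,O(1))$, $u\in\Omega^0(\mathbb{P}^1,O(-1))$ satisfying $\overline{\partial}t = 0$ and $\overline{\partial}u + \varphi t = 0$. Pick a basis $t_1,t_2$ of $H^0(\mathbb{P}^1,O(1))\cong\mathbb{C}^2$; since $H^1(\mathbb{P}^1,O(-1)) = 0$, each $\varphi t_i\in\Omega^{0,1}(\mathbb{P}^1,O(-1))$ is $\overline{\partial}$-exact, so there are $u_i\in\Omega^0(\mathbb{P}^1,O(-1))$ with $\overline{\partial}u_i = -\varphi t_i$, giving holomorphic sections $s_i = (u_i,t_i)^t$ of $E$. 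In the affine chart $\mathbb{C}\subset\mathbb{P}^1$ with coordinate $z$, with the standard choices, one gets $s_1 = \left(\tfrac{1}{1+|z|^2},\, z\right)^t$ and $s_2 = \left(\tfrac{-\overline{z}}{1+|z|^2},\, 1\right)^t$ — these are the formulas to record because they, rather than the abstract existence statement, are what later sections invoke. Then I would check linear independence: the determinant of $(s_1 \mid s_2)$ in this chart is $\tfrac{1}{1+|z|^2} + \tfrac{|z|^2}{1+|z|^2} = 1$, which is nowhere zero, and by symmetry (or by a direct check in the chart at $\infty$) the sections remain independent at $z = \infty$. Hence $s_1, s_2$ trivialize $E$.

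The only genuinely delicate points are bookkeeping rather than conceptual: one must make sure the $u_i$ are honestly smooth sections of the degree $-1$ bundle (the expressions $\tfrac{1}{1+|z|^2}$ and $\tfrac{-\overline z}{1+|z|^2}$ must be checked to patch correctly against the transition function of $O(-1)$ across $z=\infty$), and that the chosen $\varphi$ really represents a generator of the Ext group so that $[\varphi]\neq 0$ corresponds precisely to this $E$. I expect the verification of the patching at infinity — confirming that $s_1,s_2$ extend to global holomorphic sections and stay independent there — to be the main (still routine) obstacle; everything else is the three-line cohomology vanishing argument already used repeatedly above. Since the statement only claims existence of two everywhere-independent holomorphic sections when $[\varphi]\neq 0$, once these checks are in place the proof is complete.
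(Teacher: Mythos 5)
Your proposal is correct and follows essentially the same route as the paper: represent the extension class by $\varphi\in\Omega^{0,1}(\mathbb{P}^1,O(-2))$ in the upper-triangular $\overline{\partial}_E$, lift a basis $t_1,t_2$ of $H^0(\mathbb{P}^1,O(1))$ to holomorphic sections $s_i=(u_i,t_i)^t$ using $H^1(\mathbb{P}^1,O(-1))=0$, and record the explicit formulas $s_1=(\tfrac{1}{1+|z|^2},z)^t$, $s_2=(\tfrac{-\overline{z}}{1+|z|^2},1)^t$ with everywhere nonvanishing determinant. Your added determinant computation and the remark on patching at $z=\infty$ just make explicit what the paper leaves as ``it can be checked.''
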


From the above lemma, we have the following result.

\begin{lemma}
\label{indep3}Under the same assumption as in Lemma \ref{section}. Suppose
$\mathfrak{L}|_{C}\cong O_{\mathbb{P}^{1}}^{\oplus m}\oplus(O_{\mathbb{P}^{1}%
}(1)\oplus O_{\mathbb{P}^{1}}(-1))^{\oplus n}$ with each pair of
$O_{\mathbb{P}^{1}}(\pm1)$ corresponding to two $(-1)$-curves $l_{i}$ and
$l_{i+1}$ with $l_{i}-l_{i+1}=C$. Then $\mathfrak{L}|_{C}$ is trivial if and
only if $[\mathfrak{\eta}_{i,i+1}|_{C}]\neq0$ for every $\mathfrak{\eta
}_{i,i+1}\in\Omega^{0,1}(Y,O(C))$.

\begin{proof}
For simplicity, we assume $m=n=1$ and $O_{C}(l_{1})\cong O_{\mathbb{P}^{1}},$
$O_{C}(l_{2})\cong O_{\mathbb{P}^{1}}(-1)$, $O_{C}(l_{3})\cong O_{\mathbb{P}%
^{1}}(1)$ with $l_{2}-l_{3}=C$. If $[\mathfrak{\eta}_{2,3}|_{C}]\neq0$, by
Lemma \ref{section} and Lemma \ref{indep}, there exists two holomorphic
sections for $\mathfrak{L}|_{C}$ which are linearly independent at every point
in $C$: $s_{1}=(x_{1},u_{1},t_{1})^{t}$ and $s_{2}=(x_{2},u_{2},t_{2})^{t}$
with $u_{1},t_{1},u_{2},t_{2}$ given in the proof of Lemma \ref{indep}. By
$H^{0}(Y,O_{C}(l_{1}))\cong H^{0}(\mathbb{P}^{1},O)\cong\mathbb{C}$, there
exists one holomorphic section for $\mathfrak{L}|_{C}$ which is nowhere zero
on $C$: $s_{3}=(x_{3},0,0)^{t}$. These $s_{1},s_{2},s_{3}$ give a
trivialization of $\mathfrak{L}|_{C}$. If $[\mathfrak{\eta}_{2,3}|_{C}]=0$,
then $\mathfrak{L}_{N}|_{C}$ is an extension of $O_{\mathbb{P}^{1}}(1)\oplus
O_{\mathbb{P}^{1}}(-1)$ by $O_{\mathbb{P}^{1}}$ and there is no such
nontrivial extension.
\end{proof}
\end{lemma}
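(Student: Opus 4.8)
## Proof proposal for Lemma~\ref{indep3}

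The plan is to reduce the general case to the normalized model treated in the displayed proof, and to organize the bundle $\mathfrak{L}|_C$ according to which of its summands $O_C(l_i)$ is trivial, positive, or negative. First I would split the index set $\{1,\dots,N\}$ into three groups: those $i$ with $O_C(l_i)\cong O_{\mathbb{P}^1}$, those with $O_C(l_i)\cong O_{\mathbb{P}^1}(-1)$, and those with $O_C(l_i)\cong O_{\mathbb{P}^1}(1)$; by Lemma~\ref{rep} these are the only possibilities, and by the discussion preceding Theorem~\ref{thm2} each $(1)$-summand $O_C(l_i)$ is paired with a $(-1)$-summand $O_C(l_{i'})$ where $l_i-l_{i'}=C$ (or $l_{i'}-l_i=C$). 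Because $l_{i'}-l_i = \pm C$ and the partial ordering from \S3.4 is by $ht(l)$, within each such pair the negative-degree curve precedes the positive-degree one, so the off-diagonal entry joining the two is exactly $\mathfrak{\eta}_{i,i+1}\in\Omega^{0,1}(Y,O(C))$ in the notation of the statement (after relabelling consecutively).

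Next, for the ``if'' direction, assume $[\mathfrak{\eta}_{i,i+1}|_C]\neq 0$ for every such pair. For each pair I would invoke Lemma~\ref{section} with $i$ taken to be the index of the $O_{\mathbb{P}^1}(1)$-summand: starting from a basis $t_1,t_2$ of $H^0(C,O_C(l_i))\cong H^0(\mathbb{P}^1,O(1))$, Lemma~\ref{section} produces sections $s=(s_1,\dots,s_i,0,\dots,0)^t$ of $\mathfrak{L}|_C$, and Lemma~\ref{indep} (applied to the rank-two subquotient spanned by the paired $(\pm 1)$-summands, whose extension class is precisely $[\mathfrak{\eta}_{i,i+1}|_C]\neq 0$) guarantees that the two resulting sections are linearly independent at every point of $C$ already in that rank-two direction, hence a fortiori in $\mathfrak{L}|_C$. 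For each trivial summand $O_C(l_j)\cong O_{\mathbb{P}^1}$, Lemma~\ref{section} applied with $i=j$ yields a section of $\mathfrak{L}|_C$ restricting to a nowhere-vanishing constant section of that $O_{\mathbb{P}^1}$ factor. Collecting the $2n$ sections from the $n$ pairs and the $m$ sections from the trivial summands gives $2n+m=\operatorname{rk}\mathfrak{L}$ holomorphic sections; I would then check they are pointwise linearly independent on $C$ by looking at the induced sections of the associated graded $\bigoplus_j O_C(l_j)$: in each graded piece the leading terms are exactly $t_1,t_2$ (for a $(1)$-piece), the constant $1$ (for a trivial piece), and $0$ in the $(-1)$-pieces, which together form a frame. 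This gives a trivialization of $\mathfrak{L}|_C$.

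For the ``only if'' direction I would argue contrapositively: if some $[\mathfrak{\eta}_{i,i+1}|_C]=0$, then passing to the rank-two subquotient spanned by that one pair of $(\pm 1)$-summands, the extension $0\to O_{\mathbb{P}^1}(-1)\to E\to O_{\mathbb{P}^1}(1)\to 0$ has vanishing class, so $E\cong O_{\mathbb{P}^1}(-1)\oplus O_{\mathbb{P}^1}(1)$, which is not trivial; since this $E$ is a subquotient (a sub-bundle of a quotient bundle of $\mathfrak{L}|_C$ respecting the filtration), $h^0(\mathfrak{L}|_C)$ cannot reach the rank and $\mathfrak{L}|_C$ is not trivial. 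The main obstacle is the bookkeeping in the general case: one must be careful that the filtration (i.e. the ordering of the $l_j$'s) is compatible with the pairing, so that each $(1)$-summand sits \emph{above} its partner $(-1)$-summand and the relevant off-diagonal entry is genuinely the extension class appearing in Lemma~\ref{indep}; and one must confirm that no other off-diagonal entries $\mathfrak{\eta}_{j,k}|_C$ with $O_C(l_j-l_k)$ of positive degree can interfere. This last point follows because any $\mathfrak{\eta}_{j,k}\in\Omega^{0,1}(Y,O(\alpha))$ with $\alpha\in\Phi^+$ restricts on $C$ to a section of a line bundle of degree $\alpha\cdot C\in\{0,\pm1\}$ contributing nothing to the top cohomology obstruction already used in Lemma~\ref{section}, so the construction there goes through verbatim.
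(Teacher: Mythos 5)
Your overall plan is the same as the paper's (build sections via Lemma \ref{section} starting from bases of $H^{0}$ of the $O_{\mathbb{P}^{1}}(1)$ and $O_{\mathbb{P}^{1}}$ summands, use the explicit frame of Lemma \ref{indep} inside each pair, and argue the converse by a splitting argument), but two of your verification steps fail as stated. The first is the final pointwise-independence check. You claim the induced sections of the associated graded $\bigoplus_{j}O_{C}(l_{j})$ ``form a frame'': they cannot, since in a $(1)$-piece the two induced sections $t_{1},t_{2}$ lie in a rank-one fiber and are pointwise proportional, while the $(-1)$-pieces receive the zero section; indeed the graded bundle is not even trivial, so no argument that only sees the graded leading terms can prove triviality. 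Independence must use the non-graded data: at a point where $a_{1}t_{1}+a_{2}t_{2}$ vanishes, one needs $a_{1}u_{1}+a_{2}u_{2}\neq0$, where $u_{1},u_{2}$ are the components in the $(-1)$-slot of the \emph{same} pair, i.e.\ the explicit frame of the non-split extension from Lemma \ref{indep} (note these components are forced: solutions of $\overline{\partial}s_{i}=-\eta_{i,i+1}t_{a}$ in $O_{C}(l_{i})\cong O_{\mathbb{P}^{1}}(-1)$ are unique since $H^{0}(O_{\mathbb{P}^{1}}(-1))=0$). The correct bookkeeping is a downward induction on the top index of a section: no section constructed has its top component in a $(-1)$-slot, so the highest occupied index of a vanishing combination isolates either a trivial slot (nowhere-vanishing section, coefficient $0$) or one pair, where the frame $(u_{a},t_{a})$ kills both coefficients. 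This is exactly what the paper does with $s_{1},s_{2},s_{3}$ in its normalized case $m=n=1$ and again in Lemma \ref{indep2}.

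The second gap is in the ``only if'' direction: from $[\eta_{i,i+1}|_{C}]=0$ you conclude that ``$h^{0}(\mathfrak{L}|_{C})$ cannot reach the rank''. That is false: $O_{\mathbb{P}^{1}}(1)\oplus O_{\mathbb{P}^{1}}(-1)$ has $h^{0}=2=\mathrm{rank}$, and in general a degree-zero bundle on $\mathbb{P}^{1}$ can have $h^{0}$ equal to its rank without being trivial, so counting sections cannot detect non-triviality here. A correct argument: let $F_{\bullet}$ be the filtration by the first components (holomorphic subbundles, since $\eta$ is strictly upper triangular). If $\mathfrak{L}|_{C}$ were trivial, the quotient $Q=\mathfrak{L}|_{C}/F_{i-1}$ would be globally generated and of degree zero (no $\pm1$ pair is cut by the index $i-1$, so $\deg F_{i-1}=0$), hence trivial; but $Q$ contains $F_{i+1}/F_{i-1}\cong O_{\mathbb{P}^{1}}(1)\oplus O_{\mathbb{P}^{1}}(-1)$ as a subbundle when the class vanishes, and $\mathrm{Hom}(O_{\mathbb{P}^{1}}(1),O_{\mathbb{P}^{1}})=0$, a contradiction. (The paper instead splits off $O_{\mathbb{P}^{1}}(1)\oplus O_{\mathbb{P}^{1}}(-1)$ directly, using the vanishing of the relevant extension groups in its rank-three model.) Finally, a small slip: $(l_{j}-l_{k})\cdot C$ can be $\pm2$, not only $0,\pm1$ (the entries $\eta_{i,i+1}$ themselves restrict to degree $-2$); this is harmless because Lemma \ref{section} only needs $H^{1}(C,O_{C}(l_{j}))=0$ for all $j$, which holds.
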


\begin{proposition}
\label{A1}The bundle $\mathfrak{L}_{\mathfrak{\eta}}^{A_{n}}$ over $Y$ with
$\mathfrak{\eta\in}\Xi_{Y}^{A_{n}}$ can descend to $X$ if and only if
$0\neq\lbrack\eta_{n+1-i,n+2-i}|_{C_{i}}]\in H^{1}(Y,$ $O_{C_{i}}(C_{i}))$ for
every $i$, i.e. $\mathfrak{\eta\in}\Xi_{X}^{A_{n}}$.

\begin{proof}
Restricting $\mathfrak{L}_{0}^{A_{n}}$ to $C_{i}$, the corresponding line
bundle summands are
\[
O_{C_{i}}(l_{k})\cong\left\{
\begin{tabular}
[c]{cc}%
$O_{\mathbb{P}^{1}}(1),$ & $k=n+2-i$\\
$O_{\mathbb{P}^{1}}(-1),$ & $k=n+1-i$\\
$O_{\mathbb{P}^{1}},$ & $\text{otherwise.}$%
\end{tabular}
\ \ \ \right.
\]
By Lemma \ref{indep3} and our assumption, we have the proposition.
\end{proof}
\end{proposition}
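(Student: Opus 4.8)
The plan is to reduce the descent question to a fibrewise triviality question on each $C_i$ and then apply the explicit trivialization machinery already developed. By Friedman--Morgan, $\mathfrak{L}_{\mathfrak{\eta}}^{A_{n}}$ descends to $X$ if and only if $\mathfrak{L}_{\mathfrak{\eta}}^{A_{n}}|_{C_i}$ is the trivial bundle for every $i=1,\dots,n$. So I would fix $i$ and first pin down the holomorphic type of the underlying split bundle $\mathfrak{L}_{0}^{A_{n}}|_{C_i}$. From the intersection numbers $l_k\cdot C_i$ computed at the start of \S 4 (namely $l_k\cdot C_i=1$ for $k=n+2-i$, $=-1$ for $k=n+1-i$, and $0$ otherwise), the degree of $O_{C_i}(l_k)$ is $1$, $-1$, or $0$ respectively, and since each $C_i\cong\mathbb{P}^1$ this gives
\[
O_{C_i}(l_k)\cong
\begin{cases}
O_{\mathbb{P}^1}(1), & k=n+2-i,\\
O_{\mathbb{P}^1}(-1), & k=n+1-i,\\
O_{\mathbb{P}^1}, & \text{otherwise.}
\end{cases}
\]
In particular $\mathfrak{L}_{0}^{A_{n}}|_{C_i}\cong O_{\mathbb{P}^1}^{\oplus(n-1)}\oplus O_{\mathbb{P}^1}(1)\oplus O_{\mathbb{P}^1}(-1)$, and the unique pair $O_{\mathbb{P}^1}(\pm1)$ is carried by the two $(-1)$-curves $l_{n+1-i}$ and $l_{n+2-i}$, whose difference is exactly $C_i$ (indeed $l_{n+1-i}-l_{n+2-i}=C_i$ by the ordering $l_k=C_0^1+\sum_{j=1}^{n+1-k}C_j$).

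Next I would check the hypotheses of Lemma \ref{indep3} (equivalently Lemma \ref{section}): I need $H^1(C_i,O_{C_i}(l_k))=0$ for all $k$. This is immediate since each $O_{C_i}(l_k)$ is one of $O_{\mathbb{P}^1}$, $O_{\mathbb{P}^1}(1)$, $O_{\mathbb{P}^1}(-1)$, all of which have vanishing $H^1$ on $\mathbb{P}^1$. The restriction $\overline{\partial}_{\mathfrak{\eta}}|_{C_i}$ of the upper-triangular operator $\overline{\partial}_{\mathfrak{\eta}}$ is again upper-triangular with the entries $\eta_{k,m}|_{C_i}$, so Lemma \ref{indep3} applies verbatim with $C=C_i$ and with the distinguished index pair $(n+1-i,\,n+2-i)$ playing the role of $(i,i+1)$ there. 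Lemma \ref{indep3} then says $\mathfrak{L}_{\mathfrak{\eta}}^{A_{n}}|_{C_i}$ is trivial if and only if $[\eta_{n+1-i,\,n+2-i}|_{C_i}]\neq 0$ in $H^1(C_i,O_{C_i}(C_i))$. Note that $\eta_{n+1-i,n+2-i}\in\Omega^{0,1}(Y,O(l_{n+1-i}-l_{n+2-i}))=\Omega^{0,1}(Y,O(C_i))$, so the restriction $\eta_{n+1-i,n+2-i}|_{C_i}$ indeed lands in $\Omega^{0,1}(C_i,O_{C_i}(C_i))$ and represents a class in $H^1(C_i,O_{C_i}(C_i))\cong\mathbb{C}$, and its image under the further identification is the class $[\eta_{n+1-i,n+2-i}|_{C_i}]$ appearing in the statement.

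Putting this together over all $i$: $\mathfrak{L}_{\mathfrak{\eta}}^{A_{n}}$ descends to $X$ iff it is trivial on every $C_i$ iff $[\eta_{n+1-i,\,n+2-i}|_{C_i}]\neq 0$ for every $i=1,\dots,n$, which is by definition the condition $\mathfrak{\eta}\in\Xi_X^{A_n}$. The only mild subtlety — and the step I would be most careful about — is the bookkeeping identifying which entry of the matrix $\mathfrak{\eta}$ corresponds to the relevant $(-1)$-curve pair on a given $C_i$, i.e. verifying that $l_{n+1-i}-l_{n+2-i}=C_i$ and that the relative ordering of these two indices is consistent with the upper-triangular convention of \S 3.6 so that Lemma \ref{indep3} is being invoked with the hypotheses it actually requires. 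Everything else is a direct citation of Lemmas \ref{section}, \ref{indep}, \ref{indep3} and the Friedman--Morgan descent criterion.
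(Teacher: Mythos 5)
Your proposal is correct and follows essentially the same route as the paper: identify the summands of $\mathfrak{L}_{0}^{A_{n}}|_{C_{i}}$ via the intersection numbers $l_{k}\cdot C_{i}$, observe the unique pair $O_{\mathbb{P}^{1}}(\pm1)$ comes from $l_{n+1-i}$ and $l_{n+2-i}$ with $l_{n+1-i}-l_{n+2-i}=C_{i}$, and conclude by Lemma \ref{indep3} together with the Friedman--Morgan descent criterion. Your extra verification of the hypotheses of Lemma \ref{indep3} (the $H^{1}$ vanishing on $C_{i}$ and the index bookkeeping) is exactly what the paper leaves implicit, so there is no gap.
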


\subsection{$A_{n}$ Lie algebra bundle $\mathcal{\zeta}_{\mathfrak{\eta}%
}^{A_{n}}$}

As $A_{n}=sl(n+1,\mathbb{C)=}aut_{0}(\mathbb{C}^{n+1})$, $\mathcal{\zeta
}_{\mathfrak{\eta}}^{A_{n}}:=aut_{0}(\mathfrak{L}_{\mathfrak{\eta}}^{A_{n}})$
($\mathfrak{\eta\in}\Xi_{X}^{A_{n}}$) is an $A_{n}$ Lie algebra bundle over
$Y$ which can descend to $X$. This $\mathcal{\zeta}_{\mathfrak{\eta}}^{A_{n}}$
does not depend on the existence of $C_{0}$. And $\mathfrak{L}_{\mathfrak{\eta
}}^{A_{n}}$ is automatically a representation bundle of $\mathcal{\zeta
}_{\mathfrak{\eta}}^{A_{n}}$.

\subsection{$A_{n}$ minuscule representation bundle $\mathfrak{L}%
_{\mathfrak{\eta}}^{(A_{n},{\wedge}^{k}{\mathbb{C}}^{n+1})}$}

Consider a surface $X$ with an $A_{n}$ singularity $p$ and a $(-1)$-curve
$C_{0}$ passing through $p$ with multiplicity $C_{k}$. By Proposition
\ref{card}, $I^{(A_{n},{\wedge}^{k}{\mathbb{C}}^{n+1})}$ has cardinality
$\binom{k}{n+1}$. Define $\mathfrak{L}_{0}^{(A_{n},{\wedge}^{k}{\mathbb{C}%
}^{n+1})}:=\bigoplus_{l\in I}O(l)$ over $Y$.

\begin{lemma}
\label{branch1}$\mathfrak{L}_{0}^{(A_{n},{\wedge}^{k}{\mathbb{C}}^{n+1}%
)}=({\wedge}^{k}\mathfrak{L}_{0}^{A_{n}})(C_{0}^{k}-kC_{0}^{1}-\sum
_{j=1}^{k-1}(k-j)C_{j})$.

\begin{proof}
The bundles on both sides have the same rank, so we only need to check that
every line bundle summand in the right-hand side is $O_{Y}(l)$ for $l$ a
$(-1)$-curve in $I^{(A_{n},{\wedge}^{k}{\mathbb{C}}^{n+1})}$. For any $k$
distinct elements $l_{i_{j}}$ in $I^{(A_{n},{\mathbb{C}}^{n+1})}$, we denote
$l=l_{i_{1}}+l_{i_{2}}+\cdots+l_{i_{k}}+C_{0}^{k}-(l_{1}+l_{2}+\cdots l_{k})$,
then $O_{Y}(l)$ is a summand in the right-hand side. Since the intersection
number of any two distinct $(-1)$-curves in $I^{(A_{n},{\mathbb{C}}^{n+1})}$
is zero, we have $l^{2}=l\cdot K_{Y}=-1$. i.e. $l\in I^{(A_{n},{\wedge}%
^{k}{\mathbb{C}}^{n+1})}$.
\end{proof}
\end{lemma}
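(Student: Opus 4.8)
\textbf{Proof proposal for Lemma \ref{branch1}.}

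The plan is to verify the claimed isomorphism of bundles summand by summand, using the observation that both sides are direct sums of line bundles of the form $O_Y(l)$ indexed by appropriate combinatorial data, together with the fact that a direct sum of line bundles is determined by the multiset of divisor classes appearing. First I would record the setup: $I^{(A_n,\mathbb{C}^{n+1})}=\{l_1,\dots,l_{n+1}\}$ with $l_k=C_0^1+\sum_{i=1}^{n+1-k}C_i$, these $(-1)$-curves pairwise satisfying $l_i\cdot l_j=0$ for $i\neq j$, and $C_0^k$ the strict transform of the given $(-1)$-curve of multiplicity $C_k$. The right-hand side ${\wedge}^k\mathfrak{L}_0^{A_n}$ is the direct sum, over all $k$-element subsets $\{i_1<\dots<i_k\}\subseteq\{1,\dots,n+1\}$, of the line bundles $O_Y(l_{i_1}+\dots+l_{i_k})$; twisting by $O_Y\big(C_0^k-kC_0^1-\sum_{j=1}^{k-1}(k-j)C_j\big)$ sends this summand to $O_Y(l)$ where $l:=l_{i_1}+\dots+l_{i_k}+C_0^k-(l_1+\dots+l_k)$ (note $l_1+\dots+l_k=kC_0^1+\sum_{j=1}^{k-1}(k-j)C_j$, so the twist is exactly $C_0^k-(l_1+\dots+l_k)$, matching the statement). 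So the ranks agree — both are $\binom{n+1}{k}$ — and it remains to show each such $l$ lies in $I^{(A_n,\wedge^k\mathbb{C}^{n+1})}$, and that the map from $k$-subsets to these divisor classes is a bijection onto $I^{(A_n,\wedge^k\mathbb{C}^{n+1})}$.

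The key computation is that $l^2=l\cdot K_Y=-1$ for each such $l$, which by the genus formula (and the remark after the definition of $(-1)$-curves) shows $l$ is a $(-1)$-curve. For $l\cdot K_Y$: each $C_i$ is a $(-2)$-curve so $C_i\cdot K_Y=0$, and $C_0^k\cdot K_Y=-1$ while $l_j\cdot K_Y=-1$ for each $j$, hence $l\cdot K_Y=(\sum_{t=1}^k l_{i_t})\cdot K_Y+C_0^k\cdot K_Y-(\sum_{j=1}^k l_j)\cdot K_Y=-k-1-(-k)=-1$. For $l^2$: write $w:=\sum_{t=1}^k l_{i_t}-\sum_{j=1}^k l_j$, an element of $\Lambda$ (a combination of the $C_i$'s, since the $C_0^1$ contributions cancel), so $l=C_0^k+w$. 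Expanding $l^2=(C_0^k)^2+2\,C_0^k\cdot w+w^2$ and using $l_i\cdot l_j=0$ for $i\neq j$, $l_i^2=-1$, one finds $w^2=\sum l_{i_t}^2+\sum l_j^2-2\sum_{t,j}l_{i_t}\cdot l_j=-k-k-2\sum_{t,j}l_{i_t}\cdot l_j$; since among the $l_{i_t}$ some coincide with some $l_j$, the cross terms $l_{i_t}\cdot l_j$ equal $-1$ when $l_{i_t}=l_j$ and $0$ otherwise, giving $w^2=-2k+2\cdot(\#\text{overlaps})$. A short bookkeeping — entirely parallel to the inequality computation in the proof of Lemma \ref{rep}, where $(C_0^k+\sum a_iC_i)^2=-1+2a_k-(\text{sum of squares})$ was evaluated — then yields $l^2=-1$ exactly. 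I would present this as a direct computation, noting it is the same style of calculation already used for Lemma \ref{card} and Lemma \ref{rep}.

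Finally I would check the map $\{i_1<\dots<i_k\}\mapsto l$ is injective, so that the $\binom{n+1}{k}$ summands on the right are distinct and exhaust $I^{(A_n,\wedge^k\mathbb{C}^{n+1})}$ (which has cardinality $\binom{n+1}{k}$ by Lemma \ref{card}): injectivity is immediate since $l+(l_1+\dots+l_k)-C_0^k=\sum_t l_{i_t}$ recovers the multiset $\{l_{i_1},\dots,l_{i_k}\}$, and distinct $k$-subsets give distinct such sums because the $l_j$ are linearly independent in $\mathrm{Pic}(Y)$. Hence both sides are direct sums of the same $\binom{n+1}{k}$ line bundles $O_Y(l)$, $l\in I^{(A_n,\wedge^k\mathbb{C}^{n+1})}$, proving the isomorphism. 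The main obstacle is purely the bookkeeping in the $l^2=-1$ computation — making sure the overlap terms between $\{l_{i_t}\}$ and $\{l_1,\dots,l_k\}$ and the $C_0^k\cdot w$ term combine correctly — but this is routine given the intersection data already established.
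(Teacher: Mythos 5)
Your proposal is correct and takes essentially the same route as the paper: compare ranks and verify, via $l\cdot K_{Y}=l^{2}=-1$, that each twisted summand $O_{Y}(l_{i_{1}}+\cdots+l_{i_{k}}+C_{0}^{k}-(l_{1}+\cdots+l_{k}))$ is $O_{Y}(l)$ with $l\in I^{(A_{n},\wedge^{k}\mathbb{C}^{n+1})}$, with your injectivity check (so the $\binom{n+1}{k}$ summands are pairwise distinct and hence exhaust $I^{(A_{n},\wedge^{k}\mathbb{C}^{n+1})}$ by Lemma \ref{card}) merely making explicit a point the paper leaves implicit. One cosmetic caveat: under the ordering $l_{m}=C_{0}^{1}+\sum_{i=1}^{n+1-m}C_{i}$ that you (and $\S 4.1$) record, the identity $l_{1}+\cdots+l_{k}=kC_{0}^{1}+\sum_{j=1}^{k-1}(k-j)C_{j}$ holds only if $l_{1},\dots,l_{k}$ are read as the $k$ curves of smallest height (i.e.\ $l_{n+2-k},\dots,l_{n+1}$) --- the same implicit relabeling the paper's own proof uses --- so the twist is $C_{0}^{k}$ minus the sum of the $k$ lowest curves, which is what your computation actually needs.
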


From the above lemma and direct computations, for any $C_{i}$,
\[
\mathfrak{L}_{0}^{(A_{n},{\wedge}^{k}{\mathbb{C}}^{n+1})}|_{C_{i}}\cong
O_{\mathbb{P}^{1}}^{\oplus(\binom{k}{n-1}+\binom{k-2}{n-1})}\oplus
(O_{\mathbb{P}^{1}}(1)\oplus O_{\mathbb{P}^{1}}(-1))^{\oplus\binom{k-1}{n-1}%
}\text{.}%
\]

\begin{proposition}
\label{Ak}Fix any $\mathfrak{\eta\in}\Xi_{Y}^{A_{n}}$, there exists a unique
holomorphic structure on $\mathfrak{L}_{0}^{(A_{n},{\wedge}^{k}{\mathbb{C}%
}^{n+1})}$ such that the action of $\mathcal{\zeta}_{\mathfrak{\eta}}^{A_{n}}$
on the resulting bundle $\mathfrak{L}_{\mathfrak{\eta}}^{(A_{n},{\wedge}%
^{k}{\mathbb{C}}^{n+1})}$ is holomorphic. Furthermore, if $\mathfrak{\eta\in
}\Xi_{X}^{A_{n}}$, then $\mathfrak{L}_{\mathfrak{\eta}}^{(A_{n},{\wedge}%
^{k}{\mathbb{C}}^{n+1})}$ can descend to $X$.

\begin{proof}
As the action of $\mathcal{\zeta}_{\mathfrak{\eta}}^{A_{n}}$ on $\mathfrak{L}%
_{\mathfrak{\eta}}^{A_{n}}$ is holomorphic, $\mathcal{\zeta}_{\mathfrak{\eta}%
}^{A_{n}}$ acts on $\mathfrak{L}_{\mathfrak{\eta}}^{(A_{n},{\wedge}%
^{k}{\mathbb{C}}^{n+1})}:=({\wedge}^{k}\mathfrak{L}_{\mathfrak{\eta}}^{A_{n}%
})(C_{0}^{k}-kC_{0}^{1}-\sum_{j=1}^{k-1}(k-j)C_{j})$ holomorphically. The last
assertion follows from Proposition \ref{A1} and the fact that $O(C_{0}%
^{k}-kC_{0}^{1}-\sum_{j=1}^{k-1}(k-j)C_{j})|_{C_{i}}$ is trivial for every
$C_{i}$.
\end{proof}
\end{proposition}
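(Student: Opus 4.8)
The plan is to obtain the required holomorphic structure not by solving $\overline{\partial}$-equations from scratch but by \emph{transport of structure} along the canonical identification of Lemma \ref{branch1}, and then to verify in turn: that this is a holomorphic structure preserving the natural filtration, that the $\mathcal{\zeta}_{\mathfrak{\eta}}^{A_{n}}$-action is holomorphic, that the bundle descends when $\mathfrak{\eta}\in\Xi_{X}^{A_{n}}$, and that such a structure is unique. Write $N:=O_{Y}(C_{0}^{k}-kC_{0}^{1}-\sum_{j=1}^{k-1}(k-j)C_{j})$ with its standard holomorphic structure $\overline{\partial}_{N}$. Given $\mathfrak{\eta}\in\Xi_{Y}^{A_{n}}$, Proposition \ref{holo2} already supplies the holomorphic bundle $\mathfrak{L}_{\mathfrak{\eta}}^{A_{n}}=(\mathfrak{L}_{0}^{A_{n}},\overline{\partial}_{\mathfrak{\eta}})$, so ${\wedge}^{k}\mathfrak{L}_{\mathfrak{\eta}}^{A_{n}}$ carries the induced holomorphic structure ${\wedge}^{k}\overline{\partial}_{\mathfrak{\eta}}$ and ${\wedge}^{k}\mathfrak{L}_{\mathfrak{\eta}}^{A_{n}}\otimes N$ carries ${\wedge}^{k}\overline{\partial}_{\mathfrak{\eta}}\otimes 1+1\otimes\overline{\partial}_{N}$. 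By Lemma \ref{branch1} the underlying smooth bundle is exactly $\mathfrak{L}_{0}^{(A_{n},{\wedge}^{k}{\mathbb{C}}^{n+1})}$, and one checks that on each summand $O_{Y}(l)$ this operator restricts to the standard $\overline{\partial}$, hence it preserves the natural filtration $F^{\bullet}$. I would take this operator as the definition of $\overline{\partial}_{\mathfrak{\eta}}$ on $\mathfrak{L}_{\mathfrak{\eta}}^{(A_{n},{\wedge}^{k}{\mathbb{C}}^{n+1})}$.

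That the $\mathcal{\zeta}_{\mathfrak{\eta}}^{A_{n}}$-action is holomorphic is then formal: by construction $\mathcal{\zeta}_{\mathfrak{\eta}}^{A_{n}}=aut_{0}(\mathfrak{L}_{\mathfrak{\eta}}^{A_{n}})$ acts holomorphically on $\mathfrak{L}_{\mathfrak{\eta}}^{A_{n}}$, the natural derivation map $aut_{0}(\mathfrak{L}_{\mathfrak{\eta}}^{A_{n}})\to aut_{0}({\wedge}^{k}\mathfrak{L}_{\mathfrak{\eta}}^{A_{n}})$ is a holomorphic bundle homomorphism, so the induced action on ${\wedge}^{k}\mathfrak{L}_{\mathfrak{\eta}}^{A_{n}}$ is holomorphic; and since $\mathcal{\zeta}_{\mathfrak{\eta}}^{A_{n}}$ acts trivially on the line bundle $N$, tensoring by $(N,\overline{\partial}_{N})$ does not disturb holomorphicity. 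This is precisely the assertion that $\mathfrak{L}_{\mathfrak{\eta}}^{(A_{n},{\wedge}^{k}{\mathbb{C}}^{n+1})}$ is a holomorphic representation bundle of $\mathcal{\zeta}_{\mathfrak{\eta}}^{A_{n}}$.

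For descent, assume $\mathfrak{\eta}\in\Xi_{X}^{A_{n}}$. By the Friedman--Morgan criterion \cite{FM} it is enough that $\mathfrak{L}_{\mathfrak{\eta}}^{(A_{n},{\wedge}^{k}{\mathbb{C}}^{n+1})}|_{C_{i}}$ be trivial for every $i$. Proposition \ref{A1} already gives triviality of $\mathfrak{L}_{\mathfrak{\eta}}^{A_{n}}|_{C_{i}}$, hence of ${\wedge}^{k}(\mathfrak{L}_{\mathfrak{\eta}}^{A_{n}}|_{C_{i}})$; and $N|_{C_{i}}$ is trivial because $C_{i}\cong\mathbb{P}^{1}$ and $\deg(N|_{C_{i}})=(C_{0}^{k}-kC_{0}^{1}-\sum_{j=1}^{k-1}(k-j)C_{j})\cdot C_{i}=0$, a short computation using $C_{0}^{k}\cdot C_{i}=\delta_{ik}$, $C_{0}^{1}\cdot C_{i}=\delta_{i1}$ and the intersection numbers $C_{j}\cdot C_{i}$ read off the $A_{n}$ Dynkin chain. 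A tensor product of trivial bundles over $C_{i}$ being trivial, the restriction to each $C_{i}$ is trivial and the bundle descends to $X$, as in Proposition \ref{A1}.

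Finally, for uniqueness: if $\overline{\partial}'$ is another holomorphic structure on $\mathfrak{L}_{0}^{(A_{n},{\wedge}^{k}{\mathbb{C}}^{n+1})}$ making the $\mathcal{\zeta}_{\mathfrak{\eta}}^{A_{n}}$-action holomorphic, then $A:=\overline{\partial}'-\overline{\partial}_{\mathfrak{\eta}}$ is a $(0,1)$-form valued in the bundle of $\mathcal{\zeta}_{\mathfrak{\eta}}^{A_{n}}$-equivariant endomorphisms of $\mathfrak{L}$. Since ${\wedge}^{k}{\mathbb{C}}^{n+1}$ is minuscule, hence irreducible, Schur's lemma identifies that bundle fiberwise with $\mathbb{C}$, and as it contains the nowhere-vanishing section $\mathrm{id}$ it is $\cong O_{Y}$; thus $A=a\cdot\mathrm{id}$ with $a\in\Omega^{0,1}(Y)$ and $(\overline{\partial}_{\mathfrak{\eta}}+A)^{2}=0$ forces $\overline{\partial}a=0$. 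If one insists, as is implicit here, that the new structure respect the natural filtration with its standard graded pieces $O_{Y}(l)$, then the diagonal contribution $a$ must vanish and $A=0$; absent that, $a$ is absorbed by a gauge transformation via $H^{1}(Y,O_{Y})=H^{1}(X,O_{X})=0$ (valid when $q(X)=0$). I expect the one genuinely fiddly ingredient to be the vanishing $\deg(N|_{C_{i}})=0$ for all $i$, i.e. the intersection bookkeeping on the $A_{n}$ chain; the rest is formal once Lemma \ref{branch1} and Proposition \ref{A1} are in hand, with a secondary point of care being exactly which sense of "unique" one pins down.
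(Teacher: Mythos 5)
Your construction coincides with the paper's: both obtain the holomorphic structure by transporting ${\wedge}^{k}\overline{\partial}_{\eta}$ through the identification of Lemma \ref{branch1} and twisting by $N=O_{Y}(C_{0}^{k}-kC_{0}^{1}-\sum_{j=1}^{k-1}(k-j)C_{j})$, and both deduce descent from Proposition \ref{A1} together with the triviality of $N|_{C_{i}}$, which the paper simply asserts and you verify by the degree computation on the $A_{n}$ chain. Where you go beyond the paper is uniqueness: the paper's proof of Proposition \ref{Ak} does not argue it at all (it is asserted, with only a terse claim in \S 7 and an inductive computation in the $D_{n}$ analogue, Proposition \ref{action}), whereas you give a clean Schur-lemma argument showing that any two admissible structures differ by $a\cdot\mathrm{id}$ with $\overline{\partial}a=0$. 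Your caveat about the meaning of ``unique'' is substantively correct: since $a\cdot\mathrm{id}$ commutes with the fiberwise $\zeta_{\eta}^{A_{n}}$-action, holomorphicity of the action alone cannot pin down the operator literally, so uniqueness must be read either within filtration-preserving structures inducing the standard structures $O_{Y}(l)$ on the graded pieces (your $a=0$ case) or up to isomorphism via $H^{1}(Y,O_{Y})=0$, and the latter uses $q(X)=0$, which is not among the standing hypotheses ($p_{g}=0$). This is a sharper account of the uniqueness clause than the paper itself provides, and it is the only respect in which your argument differs from the printed proof.
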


\section{$D_{n}$ case}

We recall that $D_{n}=o(2n,\mathbb{C)=}aut(\mathbb{C}^{2n},q)$ for a
non-degenerate quadratic form $q$ on the standard representation
$\mathbb{C}^{2n}$. The other minuscule representations are ${\mathcal{S}}^{+}$
and ${\mathcal{S}}^{-}$ and the adjoint representation is $\wedge
^{2}\mathbb{C}^{2n}$.

\subsection{$D_{n}$ standard representation bundle $\mathfrak{L}%
_{\mathfrak{\eta}}^{(D_{n},\mathbb{C}^{2n})}$}

We consider a surface $X$ with a $D_{n}$ singularity $p$ and a $(-1)$-curve
$C_{0}$ passing through $p$ with multiplicity $C_{1}$, then $I^{(D_{n}%
,\mathbb{C}^{2n})}=I_{1}\cup I_{2}$ with $I_{1}=\{C_{0}^{1}+\sum_{i=1}%
^{k}C_{i}|0\leq k\leq n-1\}$ and $I_{2}=\{F-l|l\in I_{1}\}$, where
$F=2C_{0}^{1}+2C_{1}+\cdots+2C_{n-2}+C_{n-1}+C_{n}$. We order these
$(-1)$-curves: $l_{k}=F-C_{0}^{1}-\sum_{i=1}^{k-1}C_{i}$ and $l_{2n-k+1}%
=C_{0}^{1}+\sum_{i=1}^{k-1}C_{i}$ for $1\leq k\leq n$.

For any $l_{i}\neq l_{j}\in I$, we have $l_{i}\cdot l_{j}=0$ or $1$. Given any
$l_{i}\in I$, there exists a unique $l_{j}\in I$ such that $l_{i}\cdot
l_{j}=1$. In this case, $l_{i}+l_{j}=F$.

Define $\mathfrak{L}_{0}^{(D_{n},\mathbb{C}^{2n})}:=\bigoplus_{l\in I}O(l)$
over $Y$, for simplicity, we write it as $\mathfrak{L}_{0}^{D_{n}}$. If we
ignore $C_{n}$, then we recover the $A_{n-1}$ case as in the last section.
They are related by the following.

\begin{lemma}
\label{branch2}$\mathfrak{L}_{0}^{D_{n}}=\mathfrak{L}_{0}^{A_{n-1}}%
\oplus(\mathfrak{L}_{0}^{A_{n-1}})^{\ast}(F)$.

\begin{proof}
Since $A_{n-1}$ is a Lie subalgebra of $D_{n}$, we can decompose the
representation of $D_{n}$ as sum of irreducible representations of $A_{n-1}$.
By the branching rule, we have $2n=n+n$, that is ${\mathbb{C}}^{2n}%
={\mathbb{C}}^{n}\oplus({\mathbb{C}}^{n})^{\ast}$ with ${\mathbb{C}}^{2n}$ and
${\mathbb{C}}^{n}$ the standard representations of $D_{n}$ and $A_{n-1}$
respectively. For $I^{(D_{n},\mathbb{C}^{2n})}=I_{1}\cup I_{2}$, $I_{1}$ forms
the standard representation ${\mathbb{C}}^{n}$ of $A_{n-1}$, and $I_{2}$ forms
the $({\mathbb{C}}^{n})^{\ast}$.
\end{proof}
\end{lemma}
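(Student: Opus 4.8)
The plan is to reduce the statement to a bookkeeping identity among the index sets $I^{(D_{n},\mathbb{C}^{2n})}$ and $I^{(A_{n-1},\mathbb{C}^{n})}$, since both Whitney sums in the assertion are defined as direct sums of explicitly listed line bundles $O_{Y}(l)$.

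First I would record that deleting the node $C_{n}$ from the $D_{n}$ Dynkin diagram (Figure 2) produces exactly the $A_{n-1}$ diagram on $C_{1},\dots,C_{n-1}$, and that the induced inclusion of lattices is an isometry onto the sublattice spanned by $C_{0}^{1},C_{1},\dots,C_{n-1}$. Every element of $I_{1}=\{C_{0}^{1}+\sum_{i=1}^{k}C_{i}\mid 0\le k\le n-1\}$ has zero coefficient on $C_{n}$, so its self-intersection and its pairing with $K_{Y}$ are computed entirely inside this sublattice; hence the condition $l\cdot l=l\cdot K_{Y}=-1$ holds in $Y$ if and only if it holds for the $A_{n-1}$ resolution of $\S 4$. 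Therefore $I_{1}$ is literally the set $I^{(A_{n-1},\mathbb{C}^{n})}$, and $\mathfrak{L}_{0}^{A_{n-1}}=\bigoplus_{l\in I_{1}}O_{Y}(l)$.

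Next I would treat $I_{2}$. By definition $I_{2}=\{F-l\mid l\in I_{1}\}$, so
\[
\bigoplus_{l\in I_{2}}O_{Y}(l)=\bigoplus_{l\in I_{1}}O_{Y}(F-l)=\Bigl(\bigoplus_{l\in I_{1}}O_{Y}(-l)\Bigr)\otimes O_{Y}(F)=(\mathfrak{L}_{0}^{A_{n-1}})^{\ast}(F).
\]
It remains to verify $I=I_{1}\sqcup I_{2}$ as a disjoint union: we have $|I_{1}|=|I_{2}|=n$ while $|I|=\dim\mathbb{C}^{2n}=2n$ by Lemma \ref{card}, and $I_{1}\cap I_{2}=\varnothing$ because members of $I_{1}$ have $C_{n}$-coefficient $0$ whereas members of $F-I_{1}$ have $C_{n}$-coefficient $1$. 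Summing the two pieces gives $\mathfrak{L}_{0}^{D_{n}}=\bigoplus_{l\in I}O_{Y}(l)=\mathfrak{L}_{0}^{A_{n-1}}\oplus(\mathfrak{L}_{0}^{A_{n-1}})^{\ast}(F)$.

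The only substantive point, and where I would spend care, is the first step: confirming that the list $I^{(A_{n-1},\mathbb{C}^{n})}$ produced by the $A_{n-1}$ analysis is $I_{1}$ as a subset of $\mathrm{Pic}(Y)$, not merely in abstract bijection with it. Conceptually this is the geometric shadow of the branching rule $\mathbb{C}^{2n}|_{A_{n-1}}=\mathbb{C}^{n}\oplus(\mathbb{C}^{n})^{\ast}$: the weights $e_{i}$ of the standard $D_{n}$-module restrict to the weights of $\mathbb{C}^{n}$ and the weights $-e_{i}$ to those of $(\mathbb{C}^{n})^{\ast}$, and under the weight $\leftrightarrow$ $(-1)$-curve dictionary of Proposition \ref{representation} this is exactly the partition $I=I_{1}\cup I_{2}$; the twist by $F$ is forced because the unique $l'\in I$ pairing nontrivially with a given $l\in I_{1}$ is $l'=F-l$, mirroring the duality pairing $\mathbb{C}^{n}\otimes(\mathbb{C}^{n})^{\ast}\to\mathbb{C}$ inside $(\mathbb{C}^{2n},q)$. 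I would present the lattice computation as the actual proof and invoke the branching rule only as motivation.
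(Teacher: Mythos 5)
Your proposal is correct, and it reaches the same decomposition as the paper but justifies it differently. The paper's own proof is representation-theoretic: it invokes the branching rule for $A_{n-1}\subset D_{n}$, namely $2n=n+n$, i.e.\ ${\mathbb{C}}^{2n}={\mathbb{C}}^{n}\oplus({\mathbb{C}}^{n})^{\ast}$, and then simply declares that $I_{1}$ realizes ${\mathbb{C}}^{n}$ and $I_{2}$ realizes $({\mathbb{C}}^{n})^{\ast}$, leaving the line-bundle identification implicit. You instead do the Picard-lattice bookkeeping directly: you check that $I_{1}$ coincides with $I^{(A_{n-1},\mathbb{C}^{n})}$ as a subset of $\mathrm{Pic}(Y)$ (since those classes have zero $C_{n}$-coefficient and all intersection numbers are computed in the sublattice spanned by $C_{0}^{1},C_{1},\dots,C_{n-1}$), and you make explicit the step the paper leaves tacit, that $\bigoplus_{l\in I_{2}}O_{Y}(l)=\bigoplus_{l\in I_{1}}O_{Y}(F-l)=(\mathfrak{L}_{0}^{A_{n-1}})^{\ast}(F)$, relegating the branching rule to motivation. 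This buys a self-contained, checkable argument and explains where the twist by $F$ comes from; the paper's version buys brevity and the conceptual picture that will be reused for the spinor and $E_{n}$ branchings. One small point: in your cardinality argument for $I=I_{1}\sqcup I_{2}$ you verify $I_{1}\subseteq I$ and disjointness, but you never check $I_{2}\subseteq I$, i.e.\ that each $F-l$ is again a $(-1)$-curve mapping to $C_{0}$; this is a one-line computation using $F\cdot F=0$, $F\cdot l=0$ and $F\cdot K_{Y}=-2$ (the same check the paper performs in the spinor case, Lemma \ref{branch3}), and in any case the partition $I^{(D_{n},\mathbb{C}^{2n})}=I_{1}\cup I_{2}$ is stated in the paper's setup before the lemma, so this is a cosmetic omission rather than a genuine gap.
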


From the above lemma and direct computations, for any $C_{i}$,%
\[
\mathfrak{L}_{0}^{D_{n}}|_{C_{i}}\cong O_{\mathbb{P}^{1}}^{\oplus(2n-4)}%
\oplus(O_{\mathbb{P}^{1}}(1)\oplus O_{\mathbb{P}^{1}}(-1))^{\oplus2}\text{.}%
\]

Similar to $(A_{n},$ $\mathbb{C}^{n+1})$ case, we define $\overline{\partial
}_{\mathfrak{\eta}}:\Omega^{0,0}(Y,\mathfrak{L}_{0}^{D_{n}})\longrightarrow
\Omega^{0,1}(Y,\mathfrak{L}_{0}^{D_{n}})$ on $\mathfrak{L}_{0}^{D_{n}%
}=\bigoplus_{k=1}^{2n}O(l_{k})$ by $\overline{\partial}_{\eta}:=\overline
{\partial}_{0}+(\eta_{i,j})_{2n\times2n}$, where $\eta_{i,j}\in\Omega
^{0,1}(Y,O(l_{i}-l_{j}))$ for any $j>i$, otherwise $\eta_{i,j}=0$.

By Lemma \ref{cohomology2} and arguments similar to the proof of Proposition
\ref{holo2} for the $A_{n}$ case, given any $\eta_{i,i+1}$ with $\overline
{\partial}\eta_{i,i+1}=0$ for every $i$, there exists $\eta_{i,j}\in
\Omega^{0,1}(Y,O(l_{i}-l_{j}))$ for every $j>i$ such that $\overline{\partial
}_{\mathfrak{\mathfrak{\eta}}}^{2}=0$.

From the configuration of these $2n$ $(-1)$-curves, we can define a quadratic
form $q$ on the vector space $V_{0}=\mathbb{C}^{I}={\bigoplus_{l\in I}%
}\mathbb{C}\langle v_{l}\rangle$ spanned by these $(-1)$-curves,%
\[
q:V_{0}\otimes V_{0}\longrightarrow\mathbb{C},\text{ }q(v_{l_{i}},v_{l_{j}%
})=l_{i}\cdot l_{j}\text{.}%
\]
The $D_{n}$ Lie algebra is the space of infinitesimal automorphism of $q$,
i.e. $D_{n}=aut(V_{0},q)$.

Correspondingly, we have a fiberwise quadratic form $q$ on the bundle
$\mathfrak{L}_{\mathfrak{\eta}}^{D_{n}}$:

\begin{center}
$q:\mathfrak{L}_{\mathfrak{\eta}}^{D_{n}}\otimes\mathfrak{L}_{\mathfrak{\eta}%
}^{D_{n}}\longrightarrow O\left(  F\right)  $.
\end{center}

\begin{proposition}
\label{q}There exists $\mathfrak{\eta}$ with $\overline{\partial
}_{\mathfrak{\mathfrak{\eta}}}^{2}=0$ such that $\mathfrak{\overline{\partial
}}_{\mathfrak{\mathfrak{\eta}}}q=0$.

\begin{proof}
$\overline{\partial}_{\mathfrak{\mathfrak{\eta}}}q=0$ if and only if
$q(\overline{\partial}_{\mathfrak{\mathfrak{\eta}}}s_{i},s_{j})+q(s_{i}%
,\overline{\partial}_{\mathfrak{\mathfrak{\eta}}}s_{j})=0$ for any $s_{i}\in
H^{0}(Y,O(l_{i}))$ and $s_{j}\in H^{0}(Y,O(l_{j}))$. From the definition of
$q$, this is equivalent to $\eta_{2n+1-j,i}+\eta_{2n+1-i,j}=0$, i.e.
$\eta_{i,j}=-\eta_{2n+1-j,2n+1-i}$ for any $j>i$. From $l_{i}+l_{2n+1-i}%
=l_{j}+l_{2n+1-j}=F$, we have%
\[
\eta_{i,j}\in\Omega^{0,1}(Y,O(l_{i}-l_{j}))=\Omega^{0,1}(Y,O(l_{2n+1-j}%
-l_{2n+1-i}))\ni\eta_{2n+1-j,2n+1-i}\text{.}%
\]

We construct $\mathfrak{\eta}$ which satisfies $\overline{\partial
}_{\mathfrak{\mathfrak{\eta}}}^{2}=0$ with $\eta_{i,j}=-\eta_{2n+1-j,2n+1-i}$
inductively on $j-i$. For $j-i=1$, we can always take $\eta_{i,i+1}%
=-\eta_{2n-i,2n+1-i}$. Note we have $\eta_{n,n+1}=0$. For $j-i=2$, we have%
\[
\overline{\partial}\eta_{i,i+2}=-\eta_{i,i+1}\eta_{i+1,i+2},
\]%
\[
\overline{\partial}\eta_{2n-i-1,2n-i+1}=-\eta_{2n-i-1,2n-i}\eta_{2n-i,2n-i+1}%
=-\eta_{i+1,i+2}\eta_{i,i+1}=-\overline{\partial}\eta_{i,i+2}\text{,}%
\]
so we can take $\eta_{i,i+2}=-\eta_{2n-i-1,2n-i+1}$.

Repeat this process inductively on $j-i$, we can take $\eta_{i,j}%
=-\eta_{2n+1-j,2n+1-i}$ for any $j>i$. So there exists
$\mathfrak{\mathfrak{\eta}} $ satisfying $\mathfrak{\overline{\partial}%
}_{\mathfrak{\mathfrak{\eta}}}q=0$.
\end{proof}
\end{proposition}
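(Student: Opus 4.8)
The plan is to reduce the vanishing $\overline{\partial}_{\mathfrak{\eta}}q=0$ to an explicit symmetry condition on the matrix entries $(\eta_{i,j})$, and then to construct $\mathfrak{\eta}$ satisfying this symmetry together with the integrability condition $\overline{\partial}_{\mathfrak{\eta}}^2=0$ by an induction on $j-i$. First I would unwind what $\overline{\partial}_{\mathfrak{\eta}}q=0$ means: since $q$ is the fiberwise quadratic form with $q(v_{l_i},v_{l_j})=l_i\cdot l_j$, compatibility of $q$ with the deformed holomorphic structure is equivalent to $q(\overline{\partial}_{\mathfrak{\eta}}s_i,s_j)+q(s_i,\overline{\partial}_{\mathfrak{\eta}}s_j)=0$ for local holomorphic sections $s_i,s_j$ of the line bundle summands $O(l_i),O(l_j)$. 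Using that $l_i\cdot l_j=1$ exactly when $j=2n+1-i$ (the pairing sends $l_i$ to its complement with $l_i+l_{2n+1-i}=F$) and is $0$ otherwise, the cross terms pick out precisely $\eta_{2n+1-j,i}$ and $\eta_{2n+1-i,j}$, so the condition becomes $\eta_{i,j}=-\eta_{2n+1-j,2n+1-i}$ for all $j>i$. One checks this is consistent with the bundle bookkeeping: $l_i+l_{2n+1-i}=l_j+l_{2n+1-j}=F$ forces $l_i-l_j=l_{2n+1-j}-l_{2n+1-i}$, so $\eta_{i,j}$ and $\eta_{2n+1-j,2n+1-i}$ genuinely live in the same space $\Omega^{0,1}(Y,O(l_i-l_j))$, and the involution $i\mapsto 2n+1-i$ reverses the partial order, hence preserves the upper-triangular shape.

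Next I would construct such an $\mathfrak{\eta}$ by induction on $d:=j-i$. For $d=1$, the entries $\eta_{i,i+1}$ are only required to be $\overline{\partial}$-closed (this is exactly condition (1) for the holomorphic structure), so I simply choose them in antisymmetric pairs, setting $\eta_{i,i+1}:=-\eta_{2n-i,2n+1-i}$ for $i<n$ and noting the fixed point of the involution forces $\eta_{n,n+1}=0$ — which is fine since nothing constrains it from below. For the inductive step, assume $\eta_{i',j'}$ has been chosen with the symmetry $\eta_{i',j'}=-\eta_{2n+1-j',2n+1-i'}$ for all $j'-i'<d$, satisfying the partial integrability equations. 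For $j-i=d$, the integrability condition reads $\overline{\partial}\eta_{i,j}=-\sum_{i<m<j}\eta_{i,m}\eta_{m,j}$, and by Lemma \ref{cohomology2} the right side is $\overline{\partial}$-exact (its class lies in $H^2(Y,O(l_i-l_j))=0$), so a solution $\eta_{i,j}$ exists; the point is to choose it compatibly. Applying the involution to the right-hand side and using the inductive symmetry of the lower-height entries, one computes $-\sum_{i<m<j}\eta_{i,m}\eta_{m,j} = -\overline{\partial}(-\eta_{2n+1-j,2n+1-i})$ — reindexing $m\mapsto 2n+1-m$ just reverses the order of the product — so $-\eta_{2n+1-j,2n+1-i}$ is one admissible choice for $\eta_{i,j}$, and I take it. This propagates the symmetry to height $d$ and closes the induction.

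The main obstacle is the inductive step's consistency check: one must verify that the antisymmetry relation is preserved not just formally but compatibly with the choices already made at lower heights, i.e. that the convolution $\sum_{i<m<j}\eta_{i,m}\eta_{m,j}$ is sent to (the negative of) the corresponding convolution for the complementary pair under $m\mapsto 2n+1-m$. This is where the precise structure of the ordering and the identity $l_i+l_{2n+1-i}=F$ matter, and it is the only place any real bookkeeping is needed; the cohomological input (Lemma \ref{cohomology2}, which rests on $p_g=0$) is exactly what guarantees a solution exists at each stage, while the involution supplies a canonical symmetric choice. Everything else — the translation of $\overline{\partial}_{\mathfrak{\eta}}q=0$ into the entrywise relation, and the base case — is routine.
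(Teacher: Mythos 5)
Your proposal is correct and follows essentially the same route as the paper: translate $\overline{\partial}_{\mathfrak{\eta}}q=0$ into the antisymmetry $\eta_{i,j}=-\eta_{2n+1-j,2n+1-i}$, check both entries lie in $\Omega^{0,1}(Y,O(l_{i}-l_{j}))$ via $l_{i}+l_{2n+1-i}=l_{j}+l_{2n+1-j}=F$, and construct $\eta$ by induction on $j-i$, solving each integrability equation through $H^{2}(Y,O(l_{i}-l_{j}))=0$ (Lemma \ref{cohomology2}) and propagating the symmetry, exactly as the paper does (it writes out only $j-i=1,2$ and notes $\eta_{n,n+1}=0$). One small slip in your displayed identity: the two inductive minus signs cancel and the order reversal under $m\mapsto 2n+1-m$ costs one sign because $(0,1)$-forms anticommute, so the correct statement is $-\sum_{i<m<j}\eta_{i,m}\eta_{m,j}=\overline{\partial}\bigl(-\eta_{2n+1-j,2n+1-i}\bigr)$ without your extra outer minus — and it is precisely this version that makes $-\eta_{2n+1-j,2n+1-i}$ (rather than $+\eta_{2n+1-j,2n+1-i}$) the admissible choice, so your stated conclusion is the right one.
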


Until now, we have proved $\Xi_{Y}^{D_{n}}$ is not empty.

Restricting $\mathfrak{L}_{0}^{D_{n}}$ to $C_{n}$, the corresponding line
bundle summands are:
\[
O_{C_{n}}(l_{j})\cong\left\{
\begin{tabular}
[c]{cc}%
$O_{\mathbb{P}^{1}}(1),$ & $j=n+1\text{ or }n+2$\\
$O_{\mathbb{P}^{1}}(-1),$ & $j=n-1\text{ or }n$\\
$O_{\mathbb{P}^{1}},$ & $\text{otherwise.}$%
\end{tabular}
\ \ \right.
\]
The pairs of $O_{\mathbb{P}^{1}}(\pm1)$ in $\mathfrak{L}_{0}^{D_{n}}|_{C_{n}}$
are given by $\{l_{n-1},l_{n+1}\}$ and $\{l_{n},l_{n+2}\}$. To construct a
trivialization of $\mathfrak{L}_{\mathfrak{\eta}}^{D_{n}}|_{C_{n}}$, we need
the following generalizations of Lemma \ref{indep} and Lemma \ref{indep3}.

\begin{lemma}
\label{indep2}Under the same assumption as in Lemma \ref{section}. Assume
$l_{i+1},l_{i+2}\cdots l_{i+2k}$ satisfy $l_{i+j}\cdot C=-1$ and
$l_{i+k+j}=l_{i+j}-C$ for $j=1,2,\cdots k$. If $\mathfrak{\eta}_{i+p,i+q}=0$
for $2\leq p\leq k$, $k+1\leq q\leq2k-1$ and $q-p\leq k-1$, i.e. the
corresponding submatrix of $\overline{\partial}_{\mathfrak{L}}$ given by
$l_{i+1},l_{i+2},\cdots l_{i+2k}$ looks like
\[
\left(
\begin{tabular}
[c]{c|c}%
$\ast$ & $%
\begin{array}
[c]{cccc}%
\eta_{i+1,i+k+1} & \eta_{i+1,i+k+2} & \cdots & \eta_{i+1,i+2k}\\
0 & \eta_{i+2,i+k+2} & \cdots & \eta_{i+2,i+2k}\\
\vdots & \vdots & \ddots & \vdots\\
0 & 0 & \cdots & \eta_{i+k,i+2k}%
\end{array}
$\\\hline
$0_{k\times k}$ & $\ast$%
\end{tabular}
\ \right)
\]
with $\eta_{i+1,i+k+1},$ $\eta_{i+2,i+k+2}\cdots\eta_{i+k,i+2k}$ in
$\Omega^{0,1}(Y,O(C))$. Suppose $[\eta_{i+1,i+k+1}|_{C}],$ $[\eta
_{i+2,i+k+2}|_{C}],\cdots,[\eta_{i+k,i+2k}|_{C}]$ are nonzero, we can
construct $2k$ holomorphic sections of $\mathfrak{L}|_{C}$ which are linearly
independent at every point in $C$.

\begin{proof}
In order to keep our notations simpler, we assume $k=2$. The above matrix
given by $l_{i+1},l_{i+2},l_{i+3},l_{i+4}$ has the form
\[
\left(
\begin{tabular}
[c]{c|c}%
$%
\begin{array}
[c]{cc}%
\overline{\partial} & \eta_{i+1,i+2}\\
0 & \overline{\partial}%
\end{array}
$ & $%
\begin{array}
[c]{cc}%
\eta_{i+1,i+3} & \ast\\
0 & \eta_{i+2,i+4}%
\end{array}
$\\\hline
$0_{2\times2}$ & $%
\begin{array}
[c]{cc}%
\text{ }\overline{\partial}\text{ } & \text{ }\eta_{i+3,i+4}\\
0 & \overline{\partial}%
\end{array}
$%
\end{tabular}
\ \ \right)  \text{.}%
\]
From $H^{0}(Y,O_{C}(l_{i+4}))\cong H^{0}(\mathbb{P}^{1},O(1))\cong{\mathbb{C}%
}^{2}$ and $[\eta_{i+2,i+4}|_{C}]\neq0$, there exist two holomorphic sections
of $\mathfrak{L}|_{C}$ which are linearly independent at every point in $C$:
$s_{1}=(y_{1},u_{1},x_{1},t_{1})^{t}$ and $s_{2}=(y_{2},u_{2},x_{2},t_{2}%
)^{t}$ with $u_{1},t_{1},u_{2},t_{2}$ given in Lemma \ref{indep}. Similarly,
from $H^{0}(Y,O_{C}(l_{i+3}))\cong{\mathbb{C}}^{2}$ and $[\eta_{i+1,i+3}%
|_{C}]\neq0$, we also have two holomorphic sections of $\mathfrak{L}|_{C}$
which are linearly independent at every point in $C$: $s_{3}=(y_{3}%
,0,x_{3},0)^{t}$ and $s_{4}=(y_{4},0,x_{4},0)^{t}$. If there exist
$a_{1},a_{2},a_{3},a_{4}$ such that $a_{1}s_{1}+$ $a_{2}s_{2}+a_{3}s_{3}%
+a_{4}s_{4}=0$ at some point in $C$, then we have $a_{1}t_{1}+$ $a_{2}t_{2}=0$
and $a_{1}u_{1}+$ $a_{2}u_{2}=0$ at some point, which is impossible by the
explicit formulas for $u_{1},t_{1},u_{2},t_{2}$ in Lemma \ref{indep}. Hence we
have the lemma.
\end{proof}
\end{lemma}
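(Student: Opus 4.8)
I would lift the $k=2$ construction above to arbitrary $k$, building the $2k$ sections in $k$ pairs, one pair $\{s_{p}^{(1)},s_{p}^{(2)}\}$ attached to each pair of $(-1)$-curves $\{l_{i+p},l_{i+k+p}\}$, $1\le p\le k$. Each $\{s_{p}^{(1)},s_{p}^{(2)}\}$ will be a copy of the Euler-sequence trivialization of Lemma \ref{indep} for that pair, padded by zeros in the remaining pair-coordinates; pointwise linear independence of all $2k$ sections will then follow from a descending induction on $p$ that, at each stage, isolates one $2\times 2$ determinant which is nowhere zero by Lemma \ref{indep}.

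\textbf{Construction.} Since $l_{i+j}\cdot C=-1$, $l_{i+k+j}=l_{i+j}-C$ and $C^{2}=-2$, one has $O_{C}(l_{i+j})\cong O_{\mathbb P^{1}}(-1)$ and $O_{C}(l_{i+k+j})\cong O_{\mathbb P^{1}}(1)$; fix a basis $t_{p}^{(1)},t_{p}^{(2)}$ of $H^{0}(C,O_{C}(l_{i+k+p}))\cong\mathbb C^{2}$. For each $p,\nu$ apply Lemma \ref{section} with distinguished index $i+k+p$ to extend $t_{p}^{(\nu)}$ to a holomorphic section $s_{p}^{(\nu)}$ of $\mathfrak L|_{C}$ whose components along $l_{i+k+p+1},\dots,l_{N}$ vanish, the remaining components being obtained by solving the triangular system downwards from index $i+k+p$.

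\textbf{The delicate point.} I must show this downward solving can be organized so that the $l_{i+p''}$-component of $s_{p}^{(\nu)}$ is $0$ whenever $p''>p$, and equals the explicit Euler section $u_{p}^{(\nu)}$ of Lemma \ref{indep} attached to the extension class $[\eta_{i+p,i+k+p}|_{C}]\neq 0$ when $p''=p$. This is where the displayed shape of the submatrix enters. I would run a secondary descending induction over the rows: when the solving loop reaches the row indexed by $l_{i+p''}$ with $p''\ge p$, the ``earlier'' curves $l_{i+p''+1},\dots,l_{i+k}$ already carry the value $0$ (their pair-indices exceed $p$), so the only coupling terms left are $\eta_{i+p'',\,i+k+s}$ times the coefficient along $l_{i+k+s}$ for $1\le s\le p$; the hypothesis that $\eta_{i+p'',i+q}=0$ for $k+1\le q\le 2k-1$ and $q-p''\le k-1$ kills all of these with $s\le p''-1$. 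Thus for $p''>p$ no term survives, the equation reduces to $\overline{\partial}(\,\cdot\,)=0$, and the component lies in $H^{0}(\mathbb P^{1},O(-1))=0$; for $p''=p$ only the diagonal term survives and the equation becomes exactly the Euler equation $\overline{\partial}(\,\cdot\,)=-\eta_{i+p,i+k+p}|_{C}\,t_{p}^{(\nu)}$ of Lemma \ref{indep}. The components along $l_{i+p''}$ with $p''<p$, and along the curves of index below $i+1$, are whatever Lemma \ref{section} returns and are irrelevant.

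\textbf{Independence, and the main obstacle.} If $\sum_{p}\sum_{\nu}a_{p}^{(\nu)}s_{p}^{(\nu)}$ vanishes at a point $x\in C$, induct downwards on $p$, assuming $a_{p'}^{(\nu)}=0$ for all $p'>p$. The $l_{i+k+p}$-component of the relation sees only the $s_{p}^{(\nu)}$ (each $s_{p'}^{(\nu)}$, $p'<p$, is supported on strictly smaller indices), giving $\sum_{\nu}a_{p}^{(\nu)}t_{p}^{(\nu)}(x)=0$; the $l_{i+p}$-component sees only $\sum_{\nu}a_{p}^{(\nu)}u_{p}^{(\nu)}(x)$, since each $s_{p'}^{(\nu)}$ with $p'<p$ vanishes along $l_{i+p}$ by the previous paragraph. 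Lemma \ref{indep} says the $2\times2$ matrix with columns $(u_{p}^{(\nu)}(x),t_{p}^{(\nu)}(x))$, $\nu=1,2$, is invertible at every point of $C$, so $a_{p}^{(1)}=a_{p}^{(2)}=0$ and the induction closes; as $x$ is arbitrary the $2k$ sections are everywhere independent. The step I expect to be the real obstacle is the ``delicate point'': getting the two nested inductions (outer over $p$, inner over rows) to cooperate so that every coupling term except the single Euler term is provably eliminated at each stage. For $k=2$ this is transparent from the $4\times4$ picture, but for general $k$ it must be phrased carefully in terms of the shifted-triangular shape of the off-diagonal block.
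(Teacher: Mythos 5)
Your construction is correct and follows essentially the same route as the paper: pair the curves $\{l_{i+p},l_{i+k+p}\}$, extend a basis of $H^{0}(C,O_{C}(l_{i+k+p}))$ downwards via Lemma \ref{section}, use the vanishing hypothesis on the off-diagonal block to force the intermediate components to zero and reduce each pair to the Euler trivialization of Lemma \ref{indep}, and then conclude pointwise independence by a triangular (descending) argument. The only difference is that the paper writes out only the case $k=2$ and leaves the general case implicit, whereas you spell out the two nested inductions for arbitrary $k$ — a more careful elaboration of the same method, not a different one.
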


\begin{lemma}
\label{indep4}Under the same assumption as in Lemma \ref{section}, we assume
$\mathfrak{L}|_{C}\cong O_{\mathbb{P}^{1}}^{\oplus m}\oplus(O_{\mathbb{P}^{1}%
}(1)\oplus O_{\mathbb{P}^{1}}(-1))^{\oplus n}$ with each pair of
$O_{\mathbb{P}^{1}}(\pm1)$ and the corresponding holomorphic structure as in
Lemma \ref{indep2}. Then $\mathfrak{L}|_{C}$ is trivial if and only if
$[\mathfrak{\eta}_{i,j}|_{C}]\neq0$ for any $\mathfrak{\eta}_{i,j}\in
\Omega^{0,1}(Y,O(C))$.

\begin{proof}
Same arguments as in the proof of Lemma \ref{indep2} and Lemma \ref{indep3}.
\end{proof}
\end{lemma}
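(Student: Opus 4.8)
The plan is to reduce the triviality of $\mathfrak{L}|_{C}$ to a single numerical invariant and then compute it along the filtration coming from the strictly upper triangular shape of $\overline{\partial}_{\mathfrak{L}}$. Writing $\mathfrak{L}|_{C}\cong\bigoplus_{i}O_{\mathbb{P}^{1}}(a_{i})$, one has $\sum_{i}a_{i}=\sum_{i}(l_{i}\cdot C)=0$, so $\mathfrak{L}|_{C}$ is trivial if and only if $h^{0}(C,\mathfrak{L}|_{C}\otimes O_{C}(-1))=\sum_{a_{i}\geq 1}a_{i}=0$; I would therefore compute $H^{0}(C,\mathfrak{L}|_{C}\otimes O_{C}(-1))$ directly.

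A section $s=(s_{1},\dots,s_{N})$ of $\mathfrak{L}|_{C}(-1)$ is produced by solving $\overline{\partial}_{0}s_{j}=-\sum_{m>j}\eta_{j,m}|_{C}\,s_{m}$ for $j=N,N-1,\dots,1$, the right-hand side being $\overline{\partial}_{0}$-closed by $\overline{\partial}_{\mathfrak{L}}^{2}=0$ (this is the mechanism of Lemma \ref{section}, applied to the twist). After twisting by $O_{C}(-1)$ the graded piece $O_{C}(l_{j})\otimes O_{C}(-1)$ is $O_{\mathbb{P}^{1}}$ when $l_{j}\cdot C=1$, so at such $j$ the equation is solvable with a one-dimensional ambiguity $c_{j}\in\mathbb{C}$; it is $O_{\mathbb{P}^{1}}(-1)$ when $l_{j}\cdot C=0$, so it is solvable with no ambiguity; and it is $O_{\mathbb{P}^{1}}(-2)$ when $l_{j}\cdot C=-1$, so there is no ambiguity but a possible obstruction in $H^{1}(\mathbb{P}^{1},O(-2))\cong\mathbb{C}$. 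Hence $H^{0}(C,\mathfrak{L}|_{C}(-1))$ is the kernel of a linear map
\[
M:\mathbb{C}^{n}\longrightarrow\mathbb{C}^{n},\qquad (c_{j})_{l_{j}\cdot C=1}\longmapsto\bigl(\text{obstruction at }j'\bigr)_{l_{j'}\cdot C=-1},
\]
and $\mathfrak{L}|_{C}$ is trivial if and only if $M$ is an isomorphism.

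It then remains to pin down $M$. Since the parameters $c_{j}$ only propagate downwards in the recursion, $M_{j',j}=0$ unless $j>j'$; because the $O_{\mathbb{P}^{1}}(\pm1)$-summands are grouped into the blocks of Lemma \ref{indep2}, each an interval of consecutive positions, $M$ becomes block triangular with one diagonal block $M^{(B)}$ of size $k_{B}$ per block $B$. Inside a block $l_{i+1},\dots,l_{i+2k}$ the partner of the $O_{\mathbb{P}^{1}}(-1)$ at position $i+p$ is the $O_{\mathbb{P}^{1}}(1)$ at position $i+k+p$, and it is precisely the vanishing of $\eta_{i+p',i+k+p''}|_{C}$ for $p''<p'$ built into the hypothesis of Lemma \ref{indep2} that forces $M^{(B)}$ to be upper triangular with $(p,p)$-entry $\pm[\eta_{i+p,i+k+p}|_{C}]\in H^{1}(\mathbb{P}^{1},O(-2))$. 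Thus $\det M$ is, up to sign, the product over all blocks of the classes $[\eta_{i+p,i+k+p}|_{C}]$, which is nonzero exactly when $[\eta_{i,j}|_{C}]\neq 0$ for every $\eta_{i,j}\in\Omega^{0,1}(Y,O(C))$; this gives both implications of the lemma at once. Alternatively, for the "if" direction one can avoid cohomology and argue as in Lemmas \ref{indep}, \ref{indep2} and \ref{indep3}: lift via Lemma \ref{section} the nowhere-vanishing section of each trivial summand and the $2k$ everywhere-independent sections supplied by Lemma \ref{indep2} for each block, then assemble these into $N$ everywhere-independent sections by eliminating from the top position downwards, one trivial summand and one whole block at a time.

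The step I expect to be the real work is the identification of $M^{(B)}$: one must track, for each $O_{\mathbb{P}^{1}}(-1)$-position $i+p$, exactly which of the free parameters $c_{j}$ enter its obstruction, and this is where the combinatorial hypothesis of Lemma \ref{indep2} is genuinely used — the same bookkeeping that makes the block sections decouple there. The other point needing care is that the interval structure of the blocks is what keeps $M$ block triangular (a parameter from a lower block can feed an obstruction in a higher block but not conversely), so that $\det M$ factors over the blocks; once that is in place, the equivalence in the lemma follows immediately.
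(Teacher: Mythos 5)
Your proposal is correct, but it runs along a genuinely different track than the paper's. The paper's proof (as in Lemmas \ref{indep2} and \ref{indep3}) is constructive: for each trivial summand one lifts the nowhere-vanishing section through the upper-triangular system via Lemma \ref{section}, for each block one takes the $2k$ everywhere-independent sections of Lemma \ref{indep2} built from the explicit trivialization of Lemma \ref{indep}, and one checks pointwise independence of all $N$ sections; the converse is handled as in Lemma \ref{indep3} by splitting the relevant extension when some class $[\eta_{i,j}|_{C}]$ vanishes. You instead reduce triviality to the single numerical statement $h^{0}(C,\mathfrak{L}|_{C}(-1))=0$ (legitimate, since the splitting type has total degree $\sum_j l_j\cdot C=0$) and compute this group by the parameter/obstruction recursion down the filtration, obtaining a square matrix $M$ whose kernel is $H^{0}(C,\mathfrak{L}|_{C}(-1))$; the chain analysis you indicate does work: any path from a parameter column to an obstruction row of the same block stays inside the block (here the interval hypothesis of Lemma \ref{indep2} is used) and must cross exactly one entry of the upper-right quarter, which lies strictly below its diagonal unless the path is the direct one, so $M^{(B)}$ is upper triangular with the classes $[\eta_{i+p,i+k+p}|_{C}]$ on the diagonal and $\det M$ is their product. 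The one point to phrase carefully is that the obstruction at a row is canonically defined only on the locus where the higher obstructions vanish; to get a genuine linear map on all of $\mathbb{C}^{n}$ you should either fix continuations (solve after subtracting the harmonic representative of the obstruction, which keeps everything linear in the parameters and does not change the kernel or the triangular shape) or phrase the recursion through the connecting homomorphisms of the filtration — a routine fix that does not affect the argument. Comparing the two: your route treats both implications at once and gives a cleaner "only if" than the extension-splitting sketch in Lemma \ref{indep3}, and it quantifies the failure of triviality ($h^{0}=\dim\ker M$); the paper's route is more elementary, produces an explicit trivializing frame of $\mathfrak{L}|_{C}$, and avoids any bookkeeping about iterated obstructions. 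Your closing alternative for the "if" direction is exactly the paper's argument.
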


\begin{proposition}
\label{D1}The bundle $\mathfrak{L}_{\mathfrak{\eta}}^{D_{n}}$ over $Y$ with
$\mathfrak{\eta\in}\Xi_{Y}^{D_{n}}$ can descend to $X$ if and only if for
every $C_{k}$ and $\eta_{i,j}\in\Omega^{0,1}(Y,O(C_{k}))$, $[\eta
_{i,j}|_{C_{k}}]\neq0$, i.e. $\mathfrak{\eta\in}\Xi_{X}^{D_{n}}$.

\begin{proof}
Restricting $\mathfrak{L}_{0}^{D_{n}}$ to $C_{i}$ $(1\leq i\leq n-1)$, the
line bundle summands are
\[
O_{C_{i}}(l_{j})\cong\left\{
\begin{tabular}
[c]{cc}%
$O_{\mathbb{P}^{1}}(1),$ & $j=i+1\text{ or }2n-i$\\
$O_{\mathbb{P}^{1}}(-1),$ & $j=i\text{ or }2n-i+1$\\
$O_{\mathbb{P}^{1}},$ & $\text{otherwise.}$%
\end{tabular}
\ \ \ \right.
\]
By Lemma \ref{indep3}, $\mathfrak{L}_{\mathfrak{\eta}}^{D_{n}}|_{C_{i}}$ is
trivial if and only if $[\eta_{i,i+1}|_{C_{i}}]$, $[\eta_{2n-i,2n+1-i}%
|_{C_{i}}]$ are not zeros. For $C_{n}$, The pairs of $O_{\mathbb{P}^{1}}%
(\pm1)$ in $\mathfrak{L}_{0}^{D_{n}}|_{C_{n}}$ are given by $\{l_{n-1}%
,l_{n+1}\}$ and $\{l_{n},l_{n+2}\}$. By Lemma \ref{indep4} and $\eta
_{n,n+1}=0$ $($Proposition \ref{q}$)$, $\mathfrak{L}_{\mathfrak{\eta}}^{D_{n}%
}|_{C_{n}}$ is trivial if and only if $[\eta_{n-1,n+1}|_{C_{n}}]$,
$[\eta_{n,n+2}|_{C_{n}}]$ are not zeros. In fact, this $\mathfrak{L}%
_{\mathfrak{\eta}}^{D_{n}}$ is just an extension of $\mathfrak{L}%
_{\mathfrak{\eta}^{\prime}}^{A_{n-1}}$ by $(\mathfrak{L}_{\mathfrak{\eta
}^{\prime}}^{A_{n-1}})^{\ast}(F)$ for some $\mathfrak{\eta}^{\prime
}\mathfrak{\in}\Xi_{X}^{A_{n-1}}$ with $\mathfrak{\eta}^{\prime}%
\subset\mathfrak{\eta}$.
\end{proof}
\end{proposition}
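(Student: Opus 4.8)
The plan is to reduce the $D_n$ statement to a careful restriction analysis on each exceptional curve $C_k$, exactly as in Proposition \ref{A1}, but splitting the curves $C_1,\dots,C_{n-1}$ from the fork node $C_n$. By Friedman--Morgan \cite{FM}, $\mathfrak{L}_{\mathfrak{\eta}}^{D_n}$ descends to $X$ if and only if $\mathfrak{L}_{\mathfrak{\eta}}^{D_n}|_{C_k}$ is trivial for every $k=1,\dots,n$, so the whole statement is equivalent to establishing, for each $k$, that triviality of the restriction is equivalent to the non-vanishing of the relevant restricted extension classes $[\eta_{i,j}|_{C_k}]$ with $l_i-l_j=C_k$. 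First I would record the two restriction tables: for $1\le i\le n-1$ the summands $O_{C_i}(l_j)$ are $O_{\mathbb{P}^1}(1)$ for $j=i+1$ or $2n-i$, $O_{\mathbb{P}^1}(-1)$ for $j=i$ or $2n-i+1$, and trivial otherwise; for $C_n$ the two $O_{\mathbb{P}^1}(\pm1)$ pairs are $\{l_{n-1},l_{n+1}\}$ and $\{l_n,l_{n+2}\}$. These follow from $l_k\cdot C_i$ computations together with Lemma \ref{branch2}.

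For the nodes $C_i$ with $i\le n-1$, the situation is literally the hypothesis of Lemma \ref{indep3}: there is exactly one $O_{\mathbb{P}^1}(\pm1)$ pair coming from the interior-type $A$ part ($\{l_i,l_{i+1}\}$ with $l_i-l_{i+1}=C_i$), but in fact the table shows there is a \emph{second} pair $\{l_{2n-i},l_{2n-i+1}\}$ with $l_{2n-i}-l_{2n-i+1}=C_i$ as well. So here I would invoke Lemma \ref{indep4} (the $D_n$-adapted version of Lemma \ref{indep3}, allowing several $\pm1$ pairs in general position), which gives: $\mathfrak{L}_{\mathfrak{\eta}}^{D_n}|_{C_i}$ is trivial $\iff$ $[\eta_{i,i+1}|_{C_i}]\ne 0$ and $[\eta_{2n-i,2n+1-i}|_{C_i}]\ne 0$. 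The subtle point to check is that the off-diagonal entries of the $4\times 4$ submatrix cut out by $l_i, l_{i+1}, l_{2n-i}, l_{2n-i+1}$ have the staircase-of-zeros shape required by Lemma \ref{indep2}; this is where one uses the ordering of the $l_k$'s together with the fact that $l_i\cdot l_{2n-i}=0$ or $1$ forces the relevant $\eta$'s either to vanish or to be sections of a genuine positive root line bundle.

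For the fork node $C_n$ I would argue similarly but must use $\eta_{n,n+1}=0$, which was established in Proposition \ref{q} as part of producing an $\mathfrak{\eta}\in\Xi_Y^{D_n}$ compatible with $q$: with the pairs $\{l_{n-1},l_{n+1}\}$ and $\{l_n,l_{n+2}\}$ and the entry linking the two lower indices forced to zero, Lemma \ref{indep4} applies and yields triviality $\iff$ $[\eta_{n-1,n+1}|_{C_n}]\ne 0$ and $[\eta_{n,n+2}|_{C_n}]\ne 0$. Assembling the per-node equivalences gives exactly the condition $[\eta_{i,j}|_{C_k}]\ne 0$ for every $C_k$ and every $\eta_{i,j}\in\Omega^{0,1}(Y,O(C_k))$, i.e.\ $\mathfrak{\eta}\in\Xi_X^{D_n}$; the final remark that $\mathfrak{L}_{\mathfrak{\eta}}^{D_n}$ is then an extension of $\mathfrak{L}_{\mathfrak{\eta}'}^{A_{n-1}}$ by its dual twisted by $O(F)$ follows by restricting the decomposition of Lemma \ref{branch2} to the upper-triangular block structure. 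The main obstacle I anticipate is bookkeeping: verifying that the submatrices at $C_i$ and at $C_n$ genuinely have the zero-pattern demanded by Lemma \ref{indep2}/\ref{indep4}, since for $D_n$ the $\pm1$ pairs sit at non-adjacent positions in the ordering (unlike the $A_n$ case where each pair is a consecutive pair $\{l_i,l_{i+1}\}$), so one has to check that no spurious nonzero $\eta$ entry obstructs the inductive construction of the trivializing sections.
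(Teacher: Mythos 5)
Your proposal is correct and follows essentially the same route as the paper: the same restriction tables at $C_i$ ($i\le n-1$) and at the fork node $C_n$, triviality detected via Lemma \ref{indep3}/Lemma \ref{indep4} with the vanishing $\eta_{n,n+1}=0$ from Proposition \ref{q} handling the interleaved pairs at $C_n$, and the same concluding extension description via Lemma \ref{branch2}. The only cosmetic difference is that at the nodes $C_i$ with $i\le n-1$ the two pairs $\{l_i,l_{i+1}\}$ and $\{l_{2n-i},l_{2n-i+1}\}$ are consecutive, so the paper simply applies Lemma \ref{indep3} there rather than the more general Lemma \ref{indep4} you invoke.
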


\subsection{$D_{n}$ Lie algebra bundle $\mathcal{\zeta}_{\mathfrak{\eta}%
}^{D_{n}}$}

Note that $\mathcal{\zeta}_{\mathfrak{\eta}}^{D_{n}}=aut(\mathfrak{L}%
_{\mathfrak{\eta}}^{D_{n}},q)$ is a $D_{n}$ Lie algebra bundle over $Y$. In
order for $\mathcal{\zeta}_{\mathfrak{\eta}}^{D_{n}}$ to descend to $X$ as a
Lie algebra bundle, we need to show that $q|_{C_{i}}:\mathfrak{L}%
_{\mathfrak{\eta}}^{D_{n}}|_{C_{i}}\otimes\mathfrak{L}_{\mathfrak{\eta}%
}^{D_{n}}|_{C_{i}}\longrightarrow O_{C_{i}}\left(  F\right)  $ is a constant
map for every $C_{i}$. This follows from the fact that both $\mathfrak{L}%
_{\mathfrak{\eta}}^{D_{n}}$ and $O(F)$ are trivial on all $C_{i}$'s and
$\overline{\partial}_{\mathfrak{\eta}}q=0$. From the construction,
$\mathfrak{L}_{\mathfrak{\eta}}^{D_{n}}$ is a representation bundle of
$\mathcal{\zeta}_{\mathfrak{\eta}}^{D_{n}}$.

\subsection{$D_{n}$ spinor representation bundles $\mathfrak{L}%
_{\mathfrak{\eta}}^{(D_{n},{\mathcal{S}}^{\pm})}$}

We will only deal with ${\mathcal{S}}^{+}$, as ${\mathcal{S}}^{-}$ case is
analogous. Consider a surface $X$ with a $D_{n}$ singularity $p$ and a
$(-1)$-curve $C_{0}$ passing through $p$ with multiplicity $C_{n}$. By
Proposition \ref{card}, $|I^{(D_{n},{\mathcal{S}}^{+})}|=2^{n-1}$. Define
$\mathfrak{L}_{0}^{(D_{n},{\mathcal{S}}^{+})}:=\bigoplus_{l\in I}O(l)$ over
$Y$.

\begin{lemma}
\label{branch3}$\mathfrak{L}_{0}^{(D_{n},{\mathcal{S}}^{+})}=\bigoplus
_{m=0}^{[\frac{n}{2}]}{\wedge}^{2m}(\mathfrak{L}_{0}^{A_{n-1}})^{\ast
}(mF+C_{0}^{n})$.

\begin{proof}
First we check that every line bundle summand in the right-hand side is
$O_{Y}(l)$ for a $(-1)$-curve $l$ in $I^{(D_{n},{\mathcal{S}}^{+})}$. For any
$l_{i}\in I^{(A_{n-1},\mathbb{C}^{n})}$, we have $l_{i}$ $\cdot$ $C_{0}^{n}%
=0$, $l_{i}$ $\cdot$ $F=0$ and $F$ $\cdot$ $F=0$, $F$ $\cdot$ $C_{0}^{n}=1$.
For any $2m$ distinct elements $l_{i_{j}}$'s in $I^{(A_{n-1},\mathbb{C}^{n})}%
$, we denote $l=-(l_{i_{1}}+\cdots l_{i_{2m}})+mF+C_{0}^{n}$, then $O_{Y}(l)$
is a summand in the right-hand side. Since $l^{2}=-1$ and $l\cdot K_{Y}=-1$,
$l\in I^{(D_{n},{\mathcal{S}}^{+})}$. Also the rank of these two bundles are
the same which is $2^{n-1}=\binom{n}{0}+\binom{n}{2}+\cdots+\binom{n}%
{2[\frac{n}{2}]}$. Hence we have the lemma.
\end{proof}
\end{lemma}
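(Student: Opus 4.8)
The plan is to prove the identity by matching line-bundle summands on the two sides and then comparing ranks, in parallel with the proof of Lemma \ref{branch2}. Writing $\mathfrak{L}_0^{A_{n-1}} = \bigoplus_{i=1}^n O_Y(l_i)$ with the $l_i$ ranging over $I^{(A_{n-1},\mathbb{C}^n)}$, the standard formula for exterior powers of a direct sum of line bundles gives
\[
\bigoplus_{m=0}^{[n/2]} {\wedge}^{2m}\big(\mathfrak{L}_0^{A_{n-1}}\big)^{\ast}(mF + C_0^n) = \bigoplus_{m=0}^{[n/2]}\ \bigoplus_{\substack{S\subseteq\{1,\ldots,n\}\\ |S|=2m}} O_Y(l_S),\qquad l_S := mF + C_0^n - \sum_{i\in S} l_i .
\]
It therefore suffices to show that the classes $l_S$ are pairwise distinct and that each lies in $I^{(D_n,{\mathcal{S}}^+)}$; since $\sum_{m}\binom{n}{2m} = 2^{n-1} = \dim{\mathcal{S}}^+ = |I^{(D_n,{\mathcal{S}}^+)}|$, the last equality by Lemma \ref{card}, it then follows that $S\mapsto l_S$ is a bijection onto $I^{(D_n,{\mathcal{S}}^+)}$, so the right-hand side equals $\bigoplus_{l\in I}O_Y(l) = \mathfrak{L}_0^{(D_n,{\mathcal{S}}^+)}$.

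First I would record the intersection data: within $I^{(A_{n-1},\mathbb{C}^n)}$ one has $l_i\cdot l_j = -\delta_{ij}$, together with $l_i\cdot F = 0$, $l_i\cdot C_0^n = 0$, $F^2 = 0$, $F\cdot C_0^n = 1$, $(C_0^n)^2 = -1$, and $l_i\cdot K_Y = F\cdot K_Y = 0$, $C_0^n\cdot K_Y = -1$. A direct computation then yields $l_S^2 = 0 + (-1) + (-2m) + 2m = -1$ and $l_S\cdot K_Y = -1$, so each $l_S$ is a genus-zero curve of self-intersection $-1$ by the genus formula. To place $l_S$ in $I^{(D_n,{\mathcal{S}}^+)}$ one must also check $\pi(l_S) = C_0$, i.e. $l_S\in C_0^n + \Lambda_{\geq0}$: this is where the explicit forms of $F$ and of the $l_i$'s enter, and one verifies that $mF + C_0^n - \sum_{i\in S}l_i$ has coefficient $1$ on $C_0^n$ and non-negative coefficients on every $C_j$. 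Distinctness of the $l_S$ is then immediate: the number $l_S\cdot C_0^n = m-1$ recovers $m$, and for fixed $m$ the map $S\mapsto\sum_{i\in S}l_i$ is injective because the $l_i$ are the weights of the standard representation of $sl(n)$ (by Proposition \ref{representation} for that case), so distinct $S$ give distinct $l_S$ and hence non-isomorphic summands $O_Y(l_S)$.

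The main obstacle is precisely this last verification — confirming that every $l_S$ actually lies over $C_0$, i.e. the non-negativity of all its $C_j$-coefficients — which forces one to unwind the concrete descriptions of $F$ and of the curves in $I^{(A_{n-1},\mathbb{C}^n)}$; the intersection computation of $l_S^2$ and $l_S\cdot K_Y$, and the count $\sum_m\binom{n}{2m} = 2^{n-1}$, are routine. Conceptually the identity is the geometric shadow of the branching rule ${\mathcal{S}}^+|_{sl(n)}\cong\bigoplus_{m\ge0}{\wedge}^{2m}(\mathbb{C}^n)^{\ast}$ under $sl(n)\hookrightarrow so(2n)$, with the twist by $mF + C_0^n$ recording the translation carrying $sl(n)$-weights to the corresponding $D_n$-weights dual to classes in $C_0^n+\Lambda$; I would structure the write-up exactly along these lines, parallel to Lemma \ref{branch2}, and the ${\mathcal{S}}^-$ case is handled identically using the odd exterior powers.
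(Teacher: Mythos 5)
Your proposal follows essentially the same route as the paper's own proof: expand each $\wedge^{2m}(\mathfrak{L}_0^{A_{n-1}})^{\ast}(mF+C_0^n)$ into line-bundle summands $O_Y(l_S)$ with $l_S=mF+C_0^n-\sum_{i\in S}l_i$, verify $l_S^2=l_S\cdot K_Y=-1$ from the intersection data $l_i\cdot F=l_i\cdot C_0^n=0$, $F^2=0$, $F\cdot C_0^n=1$, and conclude via the rank count $\sum_m\binom{n}{2m}=2^{n-1}=|I^{(D_n,\mathcal{S}^+)}|$; your extra verifications (distinctness of the $l_S$, e.g. via $l_S\cdot C_0^n=m-1$, and $l_S\in C_0^n+\Lambda_{\geq0}$) are in fact slightly more careful than the paper, which stops at the numerical checks and the rank comparison. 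One small correction to your recorded data: the $l_i$ are $(-1)$-curves and $F=l_i+l_{2n+1-i}$, so $l_i\cdot K_Y=-1$ and $F\cdot K_Y=-2$, not $0$; in $l_S\cdot K_Y=mF\cdot K_Y+C_0^n\cdot K_Y-\sum_{i\in S}l_i\cdot K_Y=-2m-1+2m=-1$ these contributions cancel, so the conclusion, and the rest of your argument, is unaffected.
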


From the above lemma and direct computations, for any $C_{i}$,%
\[
\mathfrak{L}_{0}^{(D_{n},{\mathcal{S}}^{+})}|_{C_{i}}\cong O_{\mathbb{P}^{1}%
}^{\oplus2^{n-2}}\oplus(O_{\mathbb{P}^{1}}(1)\oplus O_{\mathbb{P}^{1}%
}(-1))^{\oplus2^{n-3}}\text{.}%
\]

The $D_{n}$ Lie algebra bundle $\mathcal{\zeta}_{0}^{D_{n}}$ has a natural
fiberwise action on $\mathfrak{L}_{0}^{(D_{n},{\mathcal{S}}^{+})}$,%
\[
\rho:\mathcal{\zeta}_{0}^{D_{n}}\otimes\mathfrak{L}_{0}^{(D_{n},{\mathcal{S}%
}^{+})}\longrightarrow\mathfrak{L}_{0}^{(D_{n},{\mathcal{S}}^{+})}\text{,}%
\]
which can be described easily using the reduction to $A_{n-1}$ (with the node
$C_{n}$ being removed): recall%
\[
\mathcal{\zeta}_{0}^{D_{n}}=({\wedge}^{2}\mathfrak{L}_{0}^{A_{n-1}}%
(-F))\oplus((\mathfrak{L}_{0}^{A_{n-1}})^{\ast}\otimes\mathfrak{L}%
_{0}^{A_{n-1}})\oplus(({\wedge}^{2}\mathfrak{L}_{0}^{A_{n-1}})^{\ast
}(F))\text{,}%
\]%
\[
\mathfrak{L}_{0}^{(D_{n},{\mathcal{S}}^{+})}=\bigoplus_{m=0}^{[\frac{n}{2}%
]}{\wedge}^{2m}(\mathfrak{L}_{0}^{A_{n-1}})^{\ast}(mF)\text{,}\footnote{For
simplicity, we omit the $C_{0}^{n}$ factor.}%
\]
and $\rho$ is given by interior and exterior multiplications for ${\wedge
}^{\cdot}\mathfrak{L}_{0}^{A_{n-1}}$.

\begin{proposition}
\label{action}Fix any $\mathfrak{\eta\in}\Xi_{Y}^{D_{n}}$, there exists a
unique holomorphic structure on $\mathfrak{L}_{0}^{(D_{n},{\mathcal{S}}^{+})}$
such that the action of $\mathcal{\zeta}_{\mathfrak{\eta}}^{D_{n}}$ on the
resulting bundle $\mathfrak{L}_{\mathfrak{\eta}}^{(D_{n},{\mathcal{S}}^{+})}$
is holomorphic. Furthermore, if $\mathfrak{\eta\in}\Xi_{X}^{D_{n}}$, then
$\mathfrak{L}_{\mathfrak{\eta}}^{(D_{n},{\mathcal{S}}^{+})}$ can descend to
$X$.

\begin{proof}
First, we recall the holomorphic structure on $\mathcal{\zeta}_{\mathfrak{\eta
}}^{D_{n}}$. In $I^{(D_{n},\mathbb{C}^{2n})}=I_{1}\cup I_{2}$ with
$I_{1}=\{l_{i}=C_{0}^{1}+\sum_{m=1}^{2n-i}C_{m}|n+1\leq i\leq2n\}$ and
$I_{2}=\{F-l_{i}|l_{i}\in I_{1}\}$, let $s_{i},$ $s_{i}^{\ast}$ and $f$ be
local holomorphic sections of $O(l_{i}),O(F-l_{i})$ and $O(-F)$ respectively.
By Proposition \ref{q}, we have%
\[
\overline{\partial}_{\mathfrak{L}_{\mathfrak{\eta}}^{D_{n}}}s_{i}^{\ast}%
=\sum_{p=1}^{i-1}\eta_{p,i}s_{p}^{\ast}%
\]
and%
\[
\overline{\partial}_{\mathfrak{L}_{\mathfrak{\eta}}^{D_{n}}}s_{i}=\sum
_{p=1}^{n}\eta_{p,2n+1-i}s_{p}^{\ast}-\sum_{p=i+1}^{n}\eta_{i,p}s_{p}\text{.}%
\]
Back to $\left(  \mathfrak{L}_{0}^{{\mathcal{S}}^{+}}\right)  ^{\ast}$, we
define $s_{i_{1}\cdots i_{2m}}:=s_{i_{1}}\wedge\cdots\wedge s_{i_{2m}}\otimes
f^{m}\in\Gamma({\wedge}^{2m}\mathfrak{L}_{0}^{A_{n-1}}(-mF))$ where $i_{j}%
\in\{1,2,\cdots n\}$ and define $\overline{\partial}_{\left(  \mathfrak{L}%
_{\mathfrak{\eta}}^{(D_{n},{\mathcal{S}}^{+})}\right)  ^{\ast}}$ as follows:
\[
\overline{\partial}_{\mathfrak{L}}s_{i_{1}\cdots i_{2m}}=\sum_{p,q}%
(-1)^{p+q}\eta_{i_{p},2n+1-i_{q}}s_{i_{1}\cdots\widehat{i_{p}}\cdots
\widehat{i_{q}}\cdots i_{2m}}-\sum_{p}\sum_{k\neq i_{p}}\eta_{i_{p},k}%
s_{i_{1}\cdots i_{p-1}ki_{p+1}\cdots i_{2m}}\text{,}%
\]
where $\widehat{i_{j}}$ means deleting the $i_{j}$ component. We verify
$\overline{\partial}_{\mathfrak{L}}^{2}=0$ by direct computations.

We claim that $\overline{\partial}_{\mathfrak{L}}$ is the unique holomorphic
structure such that the action of $\mathcal{\zeta}_{\mathfrak{\eta}}^{D_{n}}$
on $\left(  \mathfrak{L}_{\mathfrak{\eta}}^{(D_{n},{\mathcal{S}}^{+})}\right)
^{\ast}$ is holomorphic, i.e.%
\begin{equation}
\overline{\partial}_{\mathcal{\zeta}_{\mathfrak{\eta}}^{D_{n}}}(g)\cdot
x+g\cdot(\overline{\partial}_{\mathfrak{L}}x)=\overline{\partial
}_{\mathfrak{L}}(g\cdot x)\text{ } \tag{$\ast$}%
\end{equation}
for any $g\in\Gamma(\mathcal{\zeta}_{\mathfrak{\eta}}^{D_{n}})$ and
$x\in\Gamma\left(  \left(  \mathfrak{L}_{0}^{{\mathcal{S}}^{+}}\right)
^{\ast}\right)  $.

We prove the above claim by induction on $m$. When $m=0,$ $x=s_{0}\in
\Gamma(\wedge^{0}\mathfrak{L}_{0}^{A_{n-1}})$, by direct computations,
$(\ast)$ holds for any $g\in\Gamma(\mathcal{\zeta}_{\mathfrak{\eta}}^{D_{n}})$
if and only if $\overline{\partial}_{\mathfrak{L}}s_{0}=0$ and $\overline
{\partial}_{\mathfrak{L}}s_{ij}=-\eta_{i,2n+1-j}s_{0}-\sum_{p=i+1}^{n}%
\eta_{i,p}s_{pj}-\sum_{p=j+1}^{n}\eta_{j,p}s_{ip}$ for any $s_{ij}\in
\Gamma({\wedge}^{2}\mathfrak{L}_{0}^{A_{n-1}})$. When $m=2$, from the above
formula for $\overline{\partial}_{\mathfrak{L}}s_{ij}$, we can get the formula
for $\overline{\partial}_{\mathfrak{L}}s_{ijkl}$. Repeat this process
inductively, we can get the above formula for $\overline{\partial
}_{\mathfrak{L}}s_{i_{1}\cdots i_{2m}}$. Hence we have the first part of this proposition.

For the second part, we will rewrite $\overline{\partial}_{\mathfrak{L}}$ in
matrix form. Firstly, we have%
\[
\overline{\partial}_{\mathfrak{L}_{\mathfrak{\eta}}^{(D_{n},\mathbb{C}^{2n})}%
}=\left(
\begin{tabular}
[c]{c|c}%
$\overline{\partial}_{(\mathfrak{L}_{\mathfrak{\eta}^{\prime}}^{A_{n-1}%
})^{\ast}(F)}$ & $B$\\\hline
$0$ & $\overline{\partial}_{\mathfrak{L}_{\mathfrak{\eta}^{\prime}}^{A_{n-1}}%
}$%
\end{tabular}
\ \ \ \ \right)
\]
with $\mathfrak{\eta}^{\prime}\subset\mathfrak{\eta}$ and the upper right
block $B$ has the following shape%
\[
B=\left(
\begin{tabular}
[c]{cc}%
$%
\begin{array}
[c]{cc}%
\vdots & \vdots
\end{array}
$ & $\ddots$\\
$%
\begin{array}
[c]{cc}%
\beta & \ast\\
0 & -\beta
\end{array}
$ & $%
\begin{array}
[c]{c}%
\cdots\\
\cdots
\end{array}
$%
\end{tabular}
\ \ \ \ \right)  \text{,}%
\]
for $[\mathfrak{\beta]\in}H^{1}(Y,O(C_{n}))$.

In particular, we have an exact sequence of holomorphic bundles:%
\begin{equation}
0\rightarrow(\mathfrak{L}_{\mathfrak{\eta}^{\prime}}^{A_{n-1}})^{\ast
}(F)\rightarrow\mathfrak{L}_{\mathfrak{\eta}}^{D_{n}}\rightarrow
\mathfrak{L}_{\mathfrak{\eta}^{\prime}}^{A_{n-1}}\rightarrow0\text{.} \tag{
$\Delta$}%
\end{equation}

By tensoring $(\Delta)$ with $\mathfrak{L}_{\mathfrak{\eta}^{\prime}}%
^{A_{n-1}}(-F)$, we obtain a bundle $S_{1}$ as follows,%
\[
0\rightarrow O_{Y}\rightarrow S_{1}\rightarrow{\wedge}^{2}\mathfrak{L}%
_{\mathfrak{\eta}^{\prime}}^{A_{n-1}}(-F)\rightarrow0\text{,}%
\]
with the induced holomorphic structure given by%
\[
\overline{\partial}_{S_{1}}=\left(
\begin{tabular}
[c]{c|c}%
$\overline{\partial}_{{\wedge}^{0}\mathfrak{L}_{\mathfrak{\eta}^{\prime}%
}^{A_{n-1}}}$ & $B_{1}$\\\hline
$0$ & $\overline{\partial}_{{\wedge}^{2}\mathfrak{L}_{\mathfrak{\eta}^{\prime
}}^{A_{n-1}}(-F)}$%
\end{tabular}
\ \right)  =\left(
\begin{tabular}
[c]{c|c}%
$\overline{\partial}_{{\wedge}^{0}\mathfrak{L}_{\mathfrak{\eta}^{\prime}%
}^{A_{n-1}}}$ & $%
\begin{array}
[c]{cc}%
\pm\beta & \cdots
\end{array}
$\\\hline
$0$ & $\overline{\partial}_{{\wedge}^{2}\mathfrak{L}_{\mathfrak{\eta}^{\prime
}}^{A_{n-1}}(-F)}$%
\end{tabular}
\ \right)  \text{.}%
\]
The occurrence of $\pm\beta$ in that location is because $l_{n+1}+l_{n+2}$
with $l_{n+1},l_{n+2}\in I^{(D_{n},\mathbb{C}^{2n})}$ is the largest element
in $I^{(A_{n-1},{\wedge}^{2}\mathbb{C}^{n})}$ and $F-l_{n+1}-l_{n+2}=C_{n}$
because $F=2C_{0}^{1}+2C_{1}+\cdots2C_{n-2}+C_{n-1}+C_{n}$.

Similarly, we have an extension bundle%
\[
0\rightarrow{\wedge}^{2}\mathfrak{L}_{\mathfrak{\eta}^{\prime}}^{A_{n-1}%
}(-F)\rightarrow S_{2}\rightarrow{\wedge}^{4}\mathfrak{L}_{\mathfrak{\eta
}^{\prime}}^{A_{n-1}}(-2F)\rightarrow0\text{,}%
\]
with%
\[
\overline{\partial}_{S_{2}}=\left(
\begin{tabular}
[c]{c|c}%
$\overline{\partial}_{{\wedge}^{2}\mathfrak{L}_{\mathfrak{\eta}^{\prime}%
}^{A_{n-1}}(-F)}$ & $B_{2}$\\\hline
$0$ & $\overline{\partial}_{{\wedge}^{4}\mathfrak{L}_{\mathfrak{\eta}^{\prime
}}^{A_{n-1}}(-2F)}$%
\end{tabular}
\ \ \ \right)  \text{,}%
\]
where%
\[
B_{2}=\left(
\begin{array}
[c]{ccccc}
&  &  &  & \\
\pm\beta &  &  &  & \\
0 & \pm\beta &  &  & \\
\vdots & \vdots & \ddots &  & \\
0 & 0 & \cdots & \pm\beta &
\end{array}
\right)  \text{,}%
\]
for $[\mathfrak{\beta]\in}H^{1}(Y,Hom(O(l_{i}+l_{j}+l_{n+1}+l_{n+2}%
-2F),O(l_{i}+l_{j}-F)))=H^{1}(Y,O(C_{n}))$ with $i,j\in\{n+3,n+4,\cdots,2n\}$.
And the number of $\pm\beta$'s is $\binom{n-2}{2}$.

Inductively, we obtain $\overline{\partial}_{\left(  \mathfrak{L}%
_{\mathfrak{\eta}}^{(D_{n},{\mathcal{S}}^{+})}\right)  ^{\ast}}$ as above
which has the shape that satisfies Lemma \ref{indep4}:%
\[
\overline{\partial}_{\left(  \mathfrak{L}_{\mathfrak{\eta}}^{(D_{n}%
,{\mathcal{S}}^{+})}\right)  ^{\ast}}=\left(
\begin{tabular}
[c]{c|c|c|c}%
$\overline{\partial}_{\wedge^{0}\mathfrak{L}_{\mathfrak{\eta}^{\prime}%
}^{A_{n-1}}}$ & $B_{1}$ & $\cdots$ & $\cdots$\\\hline
$0$ & $\overline{\partial}_{{\wedge}^{2}\mathfrak{L}_{\mathfrak{\eta}^{\prime
}}^{A_{n-1}}(-F)}$ & $B_{2}$ & $\ddots$\\\hline
$0$ & $0$ & $\overline{\partial}_{\wedge^{4}\mathfrak{L}_{\mathfrak{\eta
}^{\prime}}^{A_{n-1}}(-2F)}$ & $\ddots$\\\hline
$\vdots$ & $\ddots$ & $\ddots$ & $\ddots$%
\end{tabular}
\ \ \ \right)  \text{.}%
\]
The number of $\pm\mathfrak{\beta\in}\Omega^{1}(Y,O(C_{n}))$ in $\overline
{\partial}_{\mathfrak{L}}$ is $\binom{n-2}{0}+\binom{n-2}{2}+\cdots
+\binom{n-2}{2[\frac{n-2}{2}]}=2^{n-3}$.

To prove that $\left(  \mathfrak{L}_{\mathfrak{\eta}}^{{\mathcal{S}}^{+}%
}\right)  ^{\ast}$ can descend to $X$ when $\mathfrak{\eta\in}\Xi_{X}^{D_{n}}%
$, we need to show $\left(  \mathfrak{L}_{\mathfrak{\eta}}^{{\mathcal{S}}^{+}%
}\right)  ^{\ast}|_{C_{i}}$ is trivial for every $C_{i}$. When $i\neq n$, this
follows from the fact that $\mathfrak{L}_{\mathfrak{\eta}^{\prime}}^{A_{n-1}}$
is trivial $($Proposition \ref{A1}$)$ and $Ext_{\mathbb{P}^{1}}^{1}%
(O,O)\cong0$. When $i=n$, this follows from Lemma \ref{indep4} and
$\mathfrak{\beta=\eta_{n-1,n+1}\in}\Omega^{0,1}(Y,O(C_{n}))$ with
$[\eta_{n-1,n+1}|_{C_{n}}]\neq0$.
\end{proof}
\end{proposition}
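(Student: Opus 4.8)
The plan is to exploit the branching of the half-spin representation $\mathcal{S}^{+}$ under $A_{n-1}\subset D_{n}$ recorded in Lemma \ref{branch3}, which identifies $\mathfrak{L}_{0}^{(D_{n},\mathcal{S}^{+})}$ (more conveniently, its dual) with $\bigoplus_{m}{\wedge}^{2m}(\mathfrak{L}_{0}^{A_{n-1}})^{\ast}(mF+C_{0}^{n})$. First I would fix the sub-datum $\eta'\subset\eta$ giving the holomorphic $A_{n-1}$-bundle $\mathfrak{L}_{\eta'}^{A_{n-1}}$ together with the off-diagonal blocks of $\mathfrak{L}_{\eta}^{D_{n}}$ forced by $\overline{\partial}_{\eta}q=0$ (Proposition \ref{q}). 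Writing $s_{i},s_{i}^{\ast},f$ for local frames of $O(l_{i}),O(F-l_{i}),O(-F)$, the formulas for $\overline{\partial}_{\mathfrak{L}_{\eta}^{D_{n}}}s_{i}$ and $\overline{\partial}_{\mathfrak{L}_{\eta}^{D_{n}}}s_{i}^{\ast}$ from $\S5.2$ dictate, through the Leibniz rule applied to $s_{i_{1}}\wedge\cdots\wedge s_{i_{2m}}\otimes f^{m}$, a candidate operator $\overline{\partial}_{\mathfrak{L}}$ on $(\mathfrak{L}_{0}^{(D_{n},\mathcal{S}^{+})})^{\ast}$ given by an alternating sum of interior multiplications (the $\eta_{i_{p},2n+1-i_{q}}$ terms) and exterior multiplications (the $\eta_{i_{p},k}$ terms). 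I would then verify $\overline{\partial}_{\mathfrak{L}}^{2}=0$ by direct computation; this collapses onto the identities $\overline{\partial}_{\eta}^{2}=0$ and $\eta_{i,j}=-\eta_{2n+1-j,2n+1-i}$, which together make the interior- and exterior-multiplication parts anticommute modulo the Jacobi-type cancellations among the structure constants $n_{\alpha,\beta}$.

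For uniqueness I would induct on $m$, showing that the Leibniz compatibility $\overline{\partial}_{\mathcal{\zeta}_{\eta}^{D_{n}}}(g)\cdot x+g\cdot\overline{\partial}_{\mathfrak{L}}x=\overline{\partial}_{\mathfrak{L}}(g\cdot x)$ (for all local sections $g$ of $\mathcal{\zeta}_{\eta}^{D_{n}}$ and $x$ of the spin bundle) pins down $\overline{\partial}_{\mathfrak{L}}$ on each graded piece. At $m=0$ one has the lowest weight line, and imposing this identity against the nilpotent raising part of $\mathcal{\zeta}_{\eta}^{D_{n}}$ simultaneously forces $\overline{\partial}_{\mathfrak{L}}s_{0}=0$ and determines $\overline{\partial}_{\mathfrak{L}}s_{ij}$; applying raising generators successively then determines $\overline{\partial}_{\mathfrak{L}}$ on every $s_{i_{1}\cdots i_{2m}}$. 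Since $\mathcal{\zeta}_{\eta}^{D_{n}}$ is fiberwise all of $o(2n)$ and $\mathcal{S}^{+}$ is irreducible (minuscule), each graded piece is reached from the one below, so $\overline{\partial}_{\mathfrak{L}}$ is the unique holomorphic structure with the required compatibility.

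For the descent assertion I must show $\mathfrak{L}_{\eta}^{(D_{n},\mathcal{S}^{+})}|_{C_{i}}$ is trivial for every $C_{i}$. When $i\neq n$ this is routine: $O(C_{0}^{n})|_{C_{i}}$ and $O(F)|_{C_{i}}$ are trivial and $\mathfrak{L}_{\eta'}^{A_{n-1}}|_{C_{i}}$ is trivial by Proposition \ref{A1}, hence each graded piece ${\wedge}^{2m}(\mathfrak{L}_{\eta'}^{A_{n-1}})^{\ast}(mF+C_{0}^{n})|_{C_{i}}$ is a trivial bundle, and a filtered bundle with trivial graded pieces over $\mathbb{P}^{1}$ is trivial because $\mathrm{Ext}_{\mathbb{P}^{1}}^{1}(O,O)=0$. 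The substantive case is $i=n$, where $\mathfrak{L}_{0}^{(D_{n},\mathcal{S}^{+})}|_{C_{n}}\cong O_{\mathbb{P}^{1}}^{\oplus 2^{n-2}}\oplus(O_{\mathbb{P}^{1}}(1)\oplus O_{\mathbb{P}^{1}}(-1))^{\oplus 2^{n-3}}$: I would put $\overline{\partial}_{\mathfrak{L}}$ into block-staircase form, upper-triangular in $m$, with the connecting block $B_{k}$ carrying $\binom{n-2}{2(k-1)}$ copies of $\pm\beta$ where $\beta=\eta_{n-1,n+1}\in\Omega^{0,1}(Y,O(C_{n}))$ (using $F-l_{n+1}-l_{n+2}=C_{n}$), for a total of $\binom{n-2}{0}+\binom{n-2}{2}+\cdots=2^{n-3}$ occurrences. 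I would then check that in this frame the $O_{\mathbb{P}^{1}}(\pm1)$ pairs on $C_{n}$ and the vanishing of the intervening off-diagonal entries match exactly the hypotheses of Lemma \ref{indep2}, so that $\eta\in\Xi_{X}^{D_{n}}$, i.e.\ $[\eta_{n-1,n+1}|_{C_{n}}]\neq0$, lets Lemma \ref{indep4} produce $2^{n-1}$ everywhere-independent sections; the restriction is then trivial and $\mathfrak{L}_{\eta}^{(D_{n},\mathcal{S}^{+})}$ descends to $X$. The main obstacle is precisely this last bookkeeping on $C_{n}$: confirming that the combinatorics of which $l$'s satisfy $l\cdot C_{n}=\pm1$ and which pairs differ by $C_{n}$ lines up with the triangular pattern of $\beta$-entries, so that the submatrix-vanishing hypothesis of Lemma \ref{indep2} genuinely holds rather than merely being plausible.
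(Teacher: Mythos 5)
Your proposal follows essentially the same route as the paper: the $A_{n-1}$ branching of $\mathcal{S}^{+}$, the explicit operator on $s_{i_{1}}\wedge\cdots\wedge s_{i_{2m}}\otimes f^{m}$ with interior/exterior multiplication terms forced by the Leibniz rule, uniqueness by induction on $m$ starting from the compatibility $(\ast)$ at $m=0$, and descent via the block-triangular form with $2^{n-3}$ entries $\pm\beta=\pm\eta_{n-1,n+1}$ handled by Proposition \ref{A1}, $Ext_{\mathbb{P}^{1}}^{1}(O,O)=0$, and Lemma \ref{indep4}. The bookkeeping on $C_{n}$ that you flag as the remaining obstacle is exactly the point the paper settles by exhibiting the staircase shape of $\overline{\partial}_{\left(\mathfrak{L}_{\mathfrak{\eta}}^{(D_{n},{\mathcal{S}}^{+})}\right)^{\ast}}$, so your argument is correct and matches the paper's.
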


\section{$E_{n}$ case}

\subsection{$E_{6}$ case}

We recall that \cite{A} $E_{6}\mathbb{=}aut(\mathbb{C}^{27},c)$ for a
non-degenerate cubic form $c$ on the standard representation $\mathbb{C}^{27}%
$. The other minuscule representation is $\overline{{\mathbb{C}}^{27}}$.

We consider a surface $X$ with an $E_{6}$ singularity $p$ and a $(-1)$-curve
$C_{0}$ passing through $p$ with multiplicity $C_{1}$. By Proposition
\ref{card}, $I^{(E_{6},\mathbb{C}^{27})}$ has cardinality $27$. For any two
distinct $(-1)$-curves $l_{i}$ and $l_{j}$ in $I$, we have $l_{i}\cdot
l_{j}=0$ or $1$.

Define $\mathfrak{L}_{0}^{(E_{6},\mathbb{C}^{27})}:=\bigoplus_{l\in I}O(l)$
over $Y$, for simplicity, we write it as $\mathfrak{L}_{0}^{E_{6}}$. If we
ignore $C_{6}$, then we recover the $A_{5}$ case as in $\S 4.1$.

\begin{lemma}
\label{branch4}$\mathfrak{L}_{0}^{E_{6}}=\mathfrak{L}_{0}^{A_{5}}%
\oplus({\wedge}^{2}\mathfrak{L}_{0}^{A_{5}})^{\ast}(H)\oplus({\wedge}%
^{5}\mathfrak{L}_{0}^{A_{5}})^{\ast}(2H)$, where $H=3C_{0}^{1}+3C_{1}%
+3C_{2}+3C_{3}+2C_{4}+C_{5}+C_{6}$.

\begin{proof}
$E_{6}$ has $A_{5}$ as a Lie subalgebra, the branching rule is $27=6+15+6$,
i.e. ${\mathbb{C}}^{27}={\mathbb{C}}^{6}\oplus{\wedge}^{2}({\mathbb{C}}%
^{6})^{\ast}\oplus{\wedge}^{5}({\mathbb{C}}^{6})^{\ast}$. The first $6$
$(-1)$-curves in $I:$ $l_{1}=C_{0}^{1},$ $l_{2}=C_{0}^{1}+C_{1},\cdots$
$l_{6}=C_{0}^{1}+C_{1}+C_{2}+C_{3}+C_{4}+C_{5}$ form the standard
representation ${\mathbb{C}}^{6}$ of $A_{5}$. The next $15$ $(-1)$-curves are
given by $H-l_{i}-l_{j}$ with $i\neq j\in\{1,2,\cdots,6\}$. The remaining $6$
$(-1)$-curves are given by $2H-l_{1}-l_{2}-\cdots-\widehat{l_{i}}-\cdots
-l_{6}$.
\end{proof}
\end{lemma}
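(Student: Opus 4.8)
The plan is to verify the asserted equality by matching line bundle summands. Both sides are direct sums of line bundles: $\mathfrak{L}_{0}^{E_{6}}=\bigoplus_{l\in I}O_{Y}(l)$ has $|I|=27$ summands by Lemma \ref{card}, while the right-hand side, after the exterior powers, duals and twists are expanded, has $6+15+6=27$ summands (the ranks of $\mathfrak{L}_{0}^{A_{5}}$, $\wedge^{2}\mathfrak{L}_{0}^{A_{5}}$ and $\wedge^{5}\mathfrak{L}_{0}^{A_{5}}$). So it suffices to show that each line bundle summand of the right-hand side is $O_{Y}(D)$ for some $(-1)$-curve $D$ in $Y$ with $\pi(D)=C_{0}$ --- equivalently $D\in I$ --- and that the $27$ classes thus produced are pairwise distinct; since $|I|=27$ they then exhaust $I$ and the two bundles coincide.

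Concretely, the summands of $\mathfrak{L}_{0}^{A_{5}}$ are the $O_{Y}(l_{i})$ with $l_{i}=C_{0}^{1}+C_{1}+\cdots+C_{i-1}$, $1\le i\le 6$, namely the $(-1)$-curves of the $A_{5}$-subsystem obtained by deleting the node $C_{6}$; by $\S4$ one has $l_{i}\cdot l_{j}=0$ for $i\ne j$ and $l_{i}^{2}=l_{i}\cdot K_{Y}=-1$. The summands of $(\wedge^{2}\mathfrak{L}_{0}^{A_{5}})^{\ast}(H)$ are the $O_{Y}(H-l_{i}-l_{j})$, $1\le i<j\le 6$, and those of $(\wedge^{5}\mathfrak{L}_{0}^{A_{5}})^{\ast}(2H)$ are the $O_{Y}(2H-\sum_{j\ne i}l_{j})$, $1\le i\le 6$. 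Using $C_{i}\cdot K_{Y}=0$ ($1\le i\le 6$) and $C_{0}^{1}\cdot K_{Y}=-1$, together with a direct computation from the $E_{6}$ incidence data, one finds $H\cdot K_{Y}=-3$, $H\cdot C_{i}=0$ for $0\le i\le 5$, $H\cdot C_{6}=1$, hence $H\cdot l_{i}=0$ and $H^{2}=1$. From these: $(H-l_{i}-l_{j})^{2}=H^{2}-2H\cdot(l_{i}+l_{j})+(l_{i}+l_{j})^{2}=1-0-2=-1$ and $(H-l_{i}-l_{j})\cdot K_{Y}=-3+1+1=-1$; and writing $H-l_{i}-l_{j}$ in the basis $C_{0}^{1},C_{1},\ldots,C_{6}$ shows that its $C_{0}^{1}$-coefficient equals $1$ and all other coefficients are non-negative, so it lies in $C_{0}^{1}+\Lambda_{\ge0}$ and hence represents a $(-1)$-curve $D\in I$. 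The same computation for the third block --- now using $(2H)^{2}=4$, $2H\cdot K_{Y}=-6$ and $(\sum_{j}l_{j})^{2}=-6$ --- yields squared self-intersection $-1$, intersection $-1$ with $K_{Y}$, and again membership in $C_{0}^{1}+\Lambda_{\ge0}$. Finally the $27$ classes are pairwise distinct: within a single block this is clear since the $l_{i}$ are linearly independent in the negative-definite lattice generated by $C_{0}^{1},C_{1},\ldots,C_{n}$ (the Remark after Lemma \ref{card}), and across blocks it follows by inspecting coefficients --- or, conceptually, because they are the $27$ distinct weights of $\mathbb{C}^{27}$ restricted from $E_{6}$ to $A_{5}$.

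The only real content is the computation of the numbers $H\cdot C_{i}$ and $H^{2}$, which is exactly what fixes the coefficients $(3,3,3,3,2,1,1)$ of $H$ with respect to $C_{0}^{1},C_{1},\ldots,C_{6}$: conceptually $-H$ (resp. $-2H$) is chosen so that the corresponding twist turns the $A_{5}$-weights of $\wedge^{2}(\mathbb{C}^{6})^{\ast}$ (resp. $\wedge^{5}(\mathbb{C}^{6})^{\ast}$) into honest divisor classes of $(-1)$-curves through $p$, realizing the branching $\mathbb{C}^{27}\!\downarrow_{A_{5}}=\mathbb{C}^{6}\oplus\wedge^{2}(\mathbb{C}^{6})^{\ast}\oplus\wedge^{5}(\mathbb{C}^{6})^{\ast}$ at the level of $(-1)$-curves. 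I expect this bookkeeping --- pinning down $H$, checking the intersection numbers, and confirming membership in $C_{0}^{1}+\Lambda_{\ge0}$ --- to be the main, if routine, obstacle; there is no genuine difficulty once the branching rule dictates the target.
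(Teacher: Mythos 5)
Your proposal is correct and follows essentially the same route as the paper: identify the $27$ summands on the right as $O_Y(l_i)$, $O_Y(H-l_i-l_j)$ and $O_Y(2H-\sum_{j\neq i}l_j)$, check via intersection numbers that each class has square $-1$, meets $K_Y$ in $-1$, and lies in $C_0^1+\Lambda_{\geq 0}$, and conclude by the rank count $|I|=27$; the paper merely phrases this through the branching rule $27=6+15+6$ and leaves the ``direct computations'' implicit, which you carry out (and you add the distinctness check explicitly). The only blemish is notational: in the third block the relevant quantity is $(\sum_{j\neq i}l_j)^2=-5$ rather than $(\sum_j l_j)^2=-6$ (or else one must expand $2H-\sum_j l_j+l_i$ fully), but the stated conclusions $D^2=D\cdot K_Y=-1$ are correct either way.
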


From the above lemma and direct computations, for any $C_{i}$,%
\[
\mathfrak{L}_{0}^{E_{6}}|_{C_{i}}\cong O_{\mathbb{P}^{1}}^{\oplus15}%
\oplus(O_{\mathbb{P}^{1}}(1)\oplus O_{\mathbb{P}^{1}}(-1))^{\oplus6}\text{.}%
\]

From Lemma \ref{branch4}, we can easily determine the configuration of these
$27$ $(-1)$-curves \cite{M}: Fix any $(-1)$-curve, there are exactly $10$
$(-1)$-curves intersect it, together with the fixed $(-1)$-curve, they form
$5$ triangles. A triple $l_{i},l_{j},l_{k}$ is called a triangle if
$l_{i}+l_{j}+l_{k}=K^{\prime}$, where $K^{\prime}=3C_{0}^{1}+4C_{1}%
+5C_{2}+6C_{3}+4C_{4}+2C_{5}+3C_{6}$.

From the configuration of these $27$ $(-1)$-curves in $Y$, we can define a
cubic form $c$ on the vector space $V_{0}=\mathbb{C}^{I}={\bigoplus_{l\in I}%
}\mathbb{C}\langle v_{l}\rangle$ spanned by $(-1)$-curves,%
\[
c:V_{0}\otimes V_{0}\otimes V_{0}\longrightarrow\mathbb{C},\text{ }(v_{l_{i}%
},v_{l_{j}},v_{l_{k}})\mapsto\left\{
\begin{tabular}
[c]{ll}%
$\pm1$ & if $l_{i}+l_{j}+l_{k}=K^{\prime}$\\
$0$ & otherwise.
\end{tabular}
\ \ \ \ \ \ \right.
\]
The signs above can be determined explicitly \cite{A}\cite{HT} such that
$E_{6}=aut(V_{0},c)$.

Correspondingly, we have a fiberwise cubic form $c$ on the bundle
$\mathfrak{L}_{\mathfrak{\eta}}^{E_{6}}$,

\begin{center}
$c:\mathfrak{L}_{\mathfrak{\eta}}^{E_{6}}\otimes\mathfrak{L}_{\mathfrak{\eta}%
}^{E_{6}}\otimes\mathfrak{L}_{\mathfrak{\eta}}^{E_{6}}\longrightarrow O\left(
K^{\prime}\right)  $.
\end{center}

\begin{proposition}
\label{c}There exists $\mathfrak{\eta}$ with $\overline{\partial
}_{\mathfrak{\eta}}^{2}=0$ such that $\mathfrak{\overline{\partial}%
}_{\mathfrak{\mathfrak{\eta}}}c=0$.

\begin{proof}
Note $\overline{\partial}_{\mathfrak{\mathfrak{\eta}}}c=0$ if and only if%
\begin{equation}
c(\overline{\partial}_{\mathfrak{\mathfrak{\eta}}}s_{i},s_{j},s_{k}%
)+c(s_{i},\overline{\partial}_{\mathfrak{\mathfrak{\eta}}}s_{j},s_{k}%
)+c(s_{i},s_{j},\overline{\partial}_{\mathfrak{\mathfrak{\eta}}}s_{k})=0\text{
} \tag{$\ast$}%
\end{equation}
for any $s_{i}\in H^{0}(Y,O(l_{i}))$, $s_{j}\in H^{0}(Y,O(l_{j}))$ and
$s_{k}\in H^{0}(Y,O(l_{k}))$. From the definition of $c$, if $l_{i}%
+l_{j}+l_{k}=K^{\prime}$, then the above equation $(\ast)$ holds
automatically. If $l_{i}+l_{j}+l_{k}\neq K^{\prime}$, without loss of
generality, we assume $l_{i}\cdot l_{j}=0$, then we have the following four cases.

Case $(i)$, if $l_{i}\cdot l_{k}=0$ and $l_{j}\cdot l_{k}=0$, then $(\ast)$
holds automatically.

Case $(ii)$, if $l_{i}\cdot l_{k}=0$ and $l_{j}\cdot l_{k}=1$, then $(\ast)$
holds if $\mathfrak{\eta}_{l_{i},K^{\prime}-l_{j}-l_{k}}=0$ .

Case $(iii)$, if $l_{i}\cdot l_{k}=1$ and $l_{j}\cdot l_{k}=0$, then $(\ast) $
holds if $\mathfrak{\eta}_{l_{j},K^{\prime}-l_{i}-l_{k}}=0$.

Case $(iv)$, if $l_{i}\cdot l_{k}=1$ and $l_{j}\cdot l_{k}=1$, then $(\ast)$
holds if $\mathfrak{\eta}_{l_{i},K^{\prime}-l_{j}-l_{k}}\pm\mathfrak{\eta
}_{l_{j},K^{\prime}-l_{i}-l_{k}}=0$, here the sign is determined by the signs
of cubic form.

In conclusion, for any $l_{i},l_{j}\in I^{(E_{6},\mathbb{C}^{27})}$, if
$l_{i}\cdot l_{j}\neq0$, then $\eta_{i,j}=0$. If $l_{i}\cdot l_{j}=0$, then
$l_{i}-l_{j}=\alpha$ $(j>i)$ for $\alpha\in\Phi^{+}$, i.e. $\eta_{i,j}%
\in\Omega^{0,1}(Y,O(\alpha))$. And for any other $\eta_{p,q}\in\Omega
^{0,1}(Y,O(\alpha))$, we have $\eta_{i,j}\pm\eta_{p,q}=0$. From the signs of
the cubic form $c$, we know that given any positive root $\alpha$, there
exists $6$ $\eta_{i,j}$'s in $\Omega^{0,1}(Y,O(\alpha))$, where $3$ of them
are the same and the other $3$ different to the first three by a sign. We use
computer to prove we can find such $\eta_{i,j}$'s satisfying
$\mathfrak{\overline{\partial}}_{\mathfrak{\mathfrak{\eta}}}^{2}=0$.
\end{proof}
\end{proposition}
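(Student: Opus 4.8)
The plan is to turn the equation $\overline{\partial}_{\mathfrak{\eta}}c=0$ into a finite set of linear relations among the components $\eta_{i,j}$, observe that the surviving free data is one $(0,1)$-form $\varphi_\alpha$ for each positive root $\alpha$, and then recognize $\overline{\partial}_{\mathfrak{\eta}}^2=0$ as the Maurer--Cartan system already solved in Proposition \ref{holomorphic}. This in particular shows $\Xi_Y^{E_6}$ is non-empty.

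First I would expand $\overline{\partial}_{\mathfrak{\eta}}c=0$ against triples of local holomorphic sections $s_i\in H^0(Y,O(l_i))$. Since $c(v_{l_i},v_{l_j},v_{l_k})\ne 0$ only on triangles $l_i+l_j+l_k=K'$, the Leibniz rule produces, for every non-triangle triple, a relation involving at most two of the $\eta$'s; running through the four cases according to the values of $l_i\cdot l_j,\ l_i\cdot l_k,\ l_j\cdot l_k\in\{0,1\}$ (the cases $(i)$--$(iv)$ above) one gets: $\eta_{i,j}=0$ whenever $l_i\cdot l_j\ne 0$, and, whenever $l_i\cdot l_j=0$ so that $l_i-l_j$ is a positive root (it has square $-2$ and $j>i$), all the $\eta_{p,q}$ with $l_p-l_q=\alpha$ agree up to an explicit sign fixed by $c$. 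For each of the $36$ positive roots there are precisely six such index pairs, three with one sign and three with the opposite. Hence an $\eta$ satisfying the cubic-form constraint is exactly the data of a single $\varphi=(\varphi_\alpha)_{\alpha\in\Phi^+}\in\Omega^{0,1}(Y,\bigoplus_{\alpha\in\Phi^+}O(\alpha))$, and $\overline{\partial}_{\mathfrak{\eta}}=\overline{\partial}_0+\rho(\varphi)$ where $\rho$ is the standard representation of $E_6$ reconstructed from $I^{(E_6,\mathbb{C}^{27})}$ in Proposition \ref{representation}.

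Next I would impose integrability. Because $\rho$ is faithful and a Lie algebra homomorphism, squaring $\overline{\partial}_0+\rho(\varphi)$ in the line-bundle trivializations yields $\rho\bigl(\overline{\partial}_0\varphi+\tfrac12[\varphi,\varphi]\bigr)$, so $\overline{\partial}_{\mathfrak{\eta}}^2=0$ is equivalent to $\overline{\partial}_0\varphi_\alpha+\sum_{\beta+\gamma=\alpha}n_{\beta,\gamma}\varphi_\beta\varphi_\gamma=0$ for all $\alpha\in\Phi^+$ --- which is exactly the system Proposition \ref{holomorphic} solves by induction on $ht(\alpha)$ using $H^2(Y,O(\alpha))=0$ (Lemma \ref{cohomology}). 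Starting from any $\overline{\partial}$-closed $(\varphi_{C_i})_{i=1}^{6}$, which exist by Lemma \ref{surj}, this produces $\eta$ with $\overline{\partial}_{\mathfrak{\eta}}^2=0$ and $\overline{\partial}_{\mathfrak{\eta}}c=0$.

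The genuinely delicate point is concealed in the identification $\overline{\partial}_{\mathfrak{\eta}}=\overline{\partial}_0+\rho(\varphi)$: one must check that the sign pattern of $c$ on the $27$ lines is compatible with the Chevalley structure constants $n_{\beta,\gamma}$, i.e.\ that when $\overline{\partial}_0+\rho(\varphi)$ is squared, the quadratic terms really reassemble into $\rho$ of $\tfrac12[\varphi,\varphi]$ with the correct sign for every pair of roots whose sum is again a root. In the $A_n$ and $D_n$ cases this is transparent from the combinatorics, but for $E_6$ it is a finite yet unwieldy verification over the $36$ positive roots and the $45$ triangles; I would discharge it by a direct computer enumeration, in the same spirit as the Jacobi relation $n_{\alpha,\beta}n_{\alpha+\beta,\gamma}+n_{\beta,\gamma}n_{\beta+\gamma,\alpha}+n_{\gamma,\alpha}n_{\gamma+\alpha,\beta}=0$ is used in Section 2.
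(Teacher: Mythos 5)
Your constraint analysis of $\overline{\partial}_{\mathfrak{\eta}}c=0$ is the same as the paper's: the four cases according to the intersection numbers force $\eta_{i,j}=0$ unless $l_i-l_j\in\Phi^{+}$, and tie together, with signs dictated by $c$, the six entries lying in each $\Omega^{0,1}(Y,O(\alpha))$. Where you diverge is the integrability step. The paper stops at this point and settles the existence of $\eta$ with $\overline{\partial}_{\mathfrak{\eta}}^{2}=0$ by a direct computer verification on the sign-constrained system; you instead package the constrained $\eta$ as $\overline{\partial}_{0}+\rho(\varphi)$ for the standard representation $\rho$ built in Proposition \ref{representation}, so that $\overline{\partial}_{\mathfrak{\eta}}^{2}=0$ becomes, by faithfulness of $\rho$, exactly the Maurer--Cartan system $\overline{\partial}_{0}\varphi_{\alpha}+\sum_{\beta+\gamma=\alpha}n_{\beta,\gamma}\varphi_{\beta}\varphi_{\gamma}=0$ already solved inductively in Proposition \ref{holomorphic} via $H^{2}(Y,O(\alpha))=0$, and the computer is needed only for the finite check that the sign pattern of $c$ on the $27$ lines matches the Chevalley constants $n_{\beta,\gamma}$ (equivalently, that the signs of Proposition \ref{representation} can be chosen so that $\rho$ lands in $aut(V_0,c)$). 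This is in effect an anticipation of the identification $\overline{\partial}_{\mathfrak{\eta}}=\overline{\partial}_{0}+\rho(\varphi)$ that the paper only establishes later in Section 7 when proving Theorems \ref{thm2} and \ref{thm3}; what it buys is a cleaner division of labor -- all analytic content is delegated to Lemma \ref{cohomology} and Proposition \ref{holomorphic}, and the computational content is reduced to a finite compatibility check of structure constants rather than a search for a solution of the coupled $\overline{\partial}$-system -- while the paper's formulation at this point is more self-contained but leaves the entire integrability question to the machine. Both arguments carry the same essential burden (the $E_6$ sign bookkeeping), so I consider your proposal correct and a mild but genuine streamlining of the paper's route.
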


Until now, we have proved $\Xi_{Y}^{E_{6}}$ is not empty.

\begin{proposition}
\label{E6}The bundle $\mathfrak{L}_{\mathfrak{\eta}}^{E_{6}}$ over $Y$ with
$\mathfrak{\eta\in}\Xi_{Y}^{E_{6}}$ can descend to $X$ if and only if for
every $C_{k}$ and $\eta_{i,j}\in\Omega^{0,1}(Y,O(C_{k}))$, $[\eta
_{i,j}|_{C_{k}}]\neq0$, i.e. $\mathfrak{\eta\in}\Xi_{X}^{E_{6}}$.

\begin{proof}
From Lemma \ref{branch4}, Proposition \ref{c} and the order of $I^{(E_{6}%
,\mathbb{C}^{27})}$, for $\mathfrak{\eta\in}\Xi_{Y}^{E_{6}}$, $\mathfrak{L}%
_{\mathfrak{\eta}}^{E_{6}}$ can be constructed from $\mathfrak{L}%
_{\mathfrak{\eta}^{\prime}}^{A_{5}}$ for some $\mathfrak{\eta}^{\prime
}\mathfrak{\in}\Xi_{Y}^{A_{5}}$ with $\mathfrak{\eta}^{\prime}\subset
\mathfrak{\eta}$. Under the $($non-holomorphic$)$ direct sum decomposition
$\mathfrak{L}_{0}^{E_{6}}=\mathfrak{L}_{0}^{A_{5}}\oplus({\wedge}%
^{2}\mathfrak{L}_{0}^{A_{5}})^{\ast}(H)\oplus({\wedge}^{5}\mathfrak{L}%
_{0}^{A_{5}})^{\ast}(2H)$, $\overline{\partial}_{\mathfrak{\eta}}$ for
$\mathfrak{L}_{\mathfrak{\eta}}^{E_{6}}$ has the following block
decomposition:
\[
\left(
\begin{tabular}
[c]{c|c|c}%
$\overline{\partial}_{({\wedge}^{5}\mathfrak{L}_{\mathfrak{\eta}^{\prime}%
}^{A_{5}})^{\ast}(2H)}$ & $%
\begin{array}
[c]{cccc}%
\vdots & \vdots & \vdots & \ddots\\
\pm\mathfrak{\beta} & \ast & \ast & \cdots\\
0 & \pm\mathfrak{\beta} & \ast & \cdots\\
0 & 0 & \pm\mathfrak{\beta} & \cdots
\end{array}
$ & $\ast$\\\hline
$0$ & $\overline{\partial}_{({\wedge}^{2}\mathfrak{L}_{\mathfrak{\eta}%
^{\prime}}^{A_{5}})^{\ast}(H)}$ & $%
\begin{array}
[c]{cccc}%
\vdots & \vdots & \vdots & \ddots\\
\pm\mathfrak{\beta} & \ast & \ast & \cdots\\
0 & \pm\mathfrak{\beta} & \ast & \cdots\\
0 & 0 & \pm\mathfrak{\beta} & \cdots
\end{array}
$\\\hline
$0$ & $0$ & $\overline{\partial}_{\mathfrak{L}_{\mathfrak{\eta}^{\prime}%
}^{A_{5}}}$%
\end{tabular}
\ \ \ \ \ \right)  \text{.}%
\]
Here $\pm\mathfrak{\beta\in}\Omega^{0,1}(Y,O(C_{6}))$, it is because the
corresponding two $(-1)$-curves $l$ and $l^{\prime}$ satisfying $l-l^{\prime
}=C_{6}$. The signs of $\mathfrak{\beta}$ can be determined by $\overline
{\partial}_{\mathfrak{\eta}}c=0$.

From above, we know that $\mathfrak{L}_{\mathfrak{\eta}}^{E_{6}}|_{C_{k}}$
$(k\neq6)$ is trivial if and only if $\mathfrak{L}_{\mathfrak{\eta}^{\prime}%
}^{A_{5}}|_{C_{k}}$ $(k\neq6)$ is trivial. From Proposition \ref{A1}, we have
the theorem for $k\neq6$. For $C_{6}$, from Lemma \ref{indep4}, $\mathfrak{L}%
_{\mathfrak{\eta}}^{E_{6}}|_{C_{6}}$ is trivial if and only if these
$\pm\mathfrak{\beta}$'s satisfy $[\mathfrak{\beta}|_{C_{6}}]\neq0$.
\end{proof}
\end{proposition}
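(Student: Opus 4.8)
The plan is to mimic the $A_{n}$ and $D_{n}$ arguments and reduce everything to the Friedman--Morgan criterion: $\mathfrak{L}_{\eta}^{E_{6}}$ descends to $X$ if and only if $\mathfrak{L}_{\eta}^{E_{6}}|_{C_{k}}$ is trivial for every $k=1,\dots,6$, so it suffices to analyze the restriction to each $C_{k}$ separately.

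First I would put $\overline{\partial}_{\eta}$ into a convenient block form. By Lemma \ref{branch4} there is a (non-holomorphic) splitting $\mathfrak{L}_{0}^{E_{6}}=\mathfrak{L}_{0}^{A_{5}}\oplus({\wedge}^{2}\mathfrak{L}_{0}^{A_{5}})^{\ast}(H)\oplus({\wedge}^{5}\mathfrak{L}_{0}^{A_{5}})^{\ast}(2H)$, and deleting the node $C_{6}$ turns $\eta$ into some $\eta'\in\Xi_{Y}^{A_{5}}$ with $\eta'\subset\eta$. Ordering $I^{(E_{6},\mathbb{C}^{27})}$ by height, $\overline{\partial}_{\eta}$ becomes block upper-triangular with diagonal blocks $\overline{\partial}_{({\wedge}^{5}\mathfrak{L}_{\eta'}^{A_{5}})^{\ast}(2H)}$, $\overline{\partial}_{({\wedge}^{2}\mathfrak{L}_{\eta'}^{A_{5}})^{\ast}(H)}$, $\overline{\partial}_{\mathfrak{L}_{\eta'}^{A_{5}}}$; moreover the only entries of $\eta$ joining two $(-1)$-curves whose difference is $C_{6}$ form a single class $\pm\beta\in\Omega^{0,1}(Y,O(C_{6}))$ (the signs fixed by $\overline{\partial}_{\eta}c=0$ of Proposition \ref{c}), and these sit along the sub-diagonals of the off-diagonal blocks, exactly the staircase shape to which Lemmas \ref{indep2} and \ref{indep4} apply. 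Producing this shape uses the explicit configuration of the $27$ lines: the triangle relations $l_{i}+l_{j}+l_{k}=K^{\prime}$, and the identification of the non-standard lines as $H-l_{i}-l_{j}$ and $2H-\sum_{m\neq i}l_{m}$.

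For $k\neq 6$ I would use $H\cdot C_{k}=0$: then $O(H)|_{C_{k}}$ and $O(2H)|_{C_{k}}$ are trivial, so $\mathfrak{L}_{\eta}^{E_{6}}|_{C_{k}}$ is an iterated extension built functorially from $\mathfrak{L}_{\eta'}^{A_{5}}|_{C_{k}}$ by $\wedge^{2}$ and $\wedge^{5}$; since $Ext^{1}_{\mathbb{P}^{1}}(O,O)=0$ it is trivial whenever $\mathfrak{L}_{\eta'}^{A_{5}}|_{C_{k}}$ is, and conversely $\mathfrak{L}_{\eta'}^{A_{5}}$ is a holomorphic quotient of $\mathfrak{L}_{\eta}^{E_{6}}$, so triviality of the latter on $C_{k}$ forces $\mathfrak{L}_{\eta'}^{A_{5}}|_{C_{k}}$ to be globally generated, hence trivial (it has degree $0$). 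Thus $\mathfrak{L}_{\eta}^{E_{6}}|_{C_{k}}$ is trivial iff $\mathfrak{L}_{\eta'}^{A_{5}}|_{C_{k}}$ is, which by Proposition \ref{A1} holds iff $[\eta_{i,j}|_{C_{k}}]\neq 0$ for the entry $\eta_{i,j}\in\Omega^{0,1}(Y,O(C_{k}))$; all entries of $\eta$ in a fixed root space agree up to sign (proof of Proposition \ref{c}), so this is exactly condition $(4)$ at $C_{k}$. For $k=6$, the restriction $\mathfrak{L}_{0}^{E_{6}}|_{C_{6}}\cong O_{\mathbb{P}^{1}}^{\oplus 15}\oplus(O_{\mathbb{P}^{1}}(1)\oplus O_{\mathbb{P}^{1}}(-1))^{\oplus 6}$ has its six $\pm1$-pairs joined precisely by the classes $\pm\beta$ in the staircase pattern of Lemma \ref{indep2}, so Lemma \ref{indep4} gives that $\mathfrak{L}_{\eta}^{E_{6}}|_{C_{6}}$ is trivial iff $[\beta|_{C_{6}}]\neq 0$ for each of them, i.e. iff $[\eta_{i,j}|_{C_{6}}]\neq 0$. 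Combining the two cases via Friedman--Morgan gives $\eta\in\Xi_{X}^{E_{6}}$.

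The step I expect to be the main obstacle is the combinatorial verification underlying the second paragraph: one must check, from the explicit list of the $27$ $(-1)$-curves together with the constraint $\overline{\partial}_{\eta}c=0$, that after ordering by height the $C_{6}$-linked pairs occupy exactly the positions required by Lemma \ref{indep2} and that the ``$\ast$'' entries between them vanish where that lemma demands --- equivalently, that no positive root other than those dictated by the branching contributes a $C_{6}$-component to $\eta$. Once this shape is established, everything else (the $k\neq 6$ reduction, functoriality of $\wedge^{\bullet}$ under restriction, the $Ext$-vanishing, and the coincidence up to sign of the $\eta_{i,j}$ in a fixed root space) is routine.
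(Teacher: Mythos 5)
Your proposal follows the paper's own proof essentially verbatim: the same branching decomposition of Lemma \ref{branch4} into $\mathfrak{L}_{0}^{A_{5}}\oplus({\wedge}^{2}\mathfrak{L}_{0}^{A_{5}})^{\ast}(H)\oplus({\wedge}^{5}\mathfrak{L}_{0}^{A_{5}})^{\ast}(2H)$, the same block upper-triangular form of $\overline{\partial}_{\eta}$ with the $\pm\beta\in\Omega^{0,1}(Y,O(C_{6}))$ entries pinned down by $\overline{\partial}_{\eta}c=0$ (Proposition \ref{c}), Proposition \ref{A1} for the components $C_{k}$ with $k\neq6$, and Lemma \ref{indep4} for $C_{6}$. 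The extra details you supply (the computation $H\cdot C_{k}=0$ for $k\neq6$, the $Ext_{\mathbb{P}^{1}}^{1}(O,O)=0$ extension argument, and deducing triviality of the quotient $\mathfrak{L}_{\eta'}^{A_{5}}|_{C_{k}}$ from global generation) only make explicit steps the paper asserts, so this is the same route.
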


Note that $\mathcal{\zeta}_{\mathfrak{\eta}}^{E_{6}}=aut(\mathfrak{L}%
_{\mathfrak{\eta}}^{E_{6}},c)$ is an $E_{6}$ Lie algebra bundle over $Y$. In
order for $\mathcal{\zeta}_{\mathfrak{\eta}}^{E_{6}}$ to descend to $X$ as a
Lie algebra bundle, we need to show that $c|_{C_{i}}:\mathfrak{L}%
_{\mathfrak{\eta}}^{E_{6}}|_{C_{i}}\otimes\mathfrak{L}_{\mathfrak{\eta}%
}^{E_{6}}|_{C_{i}}\otimes\mathfrak{L}_{\mathfrak{\eta}}^{E_{6}}|_{C_{i}%
}\longrightarrow O_{C_{i}}\left(  K^{\prime}\right)  $ is a constant map for
every $C_{i}$. This follows from the fact that both $\mathfrak{L}%
_{\mathfrak{\eta}}^{E_{6}}$ and $O(K^{\prime})$ are trivial on all $C_{i}$'s
and $\overline{\partial}_{\mathfrak{\eta}}c=0$. From the construction,
$\mathfrak{L}_{\mathfrak{\eta}}^{E_{6}}$ is a representation bundle of
$\mathcal{\zeta}_{\mathfrak{\eta}}^{E_{6}}$.

The only other minuscule representation $\overline{{\mathbb{C}}^{27}}$ of
$E_{6}$ is the dual of the standard representation ${\mathbb{C}}^{27}$,
therefore $\mathfrak{L}_{\mathfrak{\eta}}^{(E_{6},\overline{\mathbb{C}^{27}}%
)}=\left(  \mathfrak{L}_{\mathfrak{\eta}}^{(E_{6},\mathbb{C}^{27})}\right)
^{\ast}$.

\subsection{$E_{7}$ case}

We recall that \cite{A} $E_{7}\mathbb{=}aut(\mathbb{C}^{56},t)$ for a
non-degenerate quartic form $t$ on the standard representation $\mathbb{C}%
^{56}$. There is no other minuscule representation of $E_{7}$.

We consider a surface $X$ with an $E_{7}$ singularity $p$ and a $(-1)$-curve
$C_{0}$ passing through $p$ with multiplicity $C_{1}$. By Proposition
\ref{card}, $I^{(E_{7},\mathbb{C}^{56})}$ has cardinality $56$. For any two
distinct $(-1)$-curves $l_{i}$ and $l_{j}$ in $I$, we have $l_{i}\cdot
l_{j}=0,1$ or $2$.

Define $\mathfrak{L}_{0}^{(E_{7},\mathbb{C}^{56})}:=\bigoplus_{l\in I}O(l)$
over $Y$, for simplicity, we write it as $\mathfrak{L}_{0}^{E_{7}}$. If we
ignore $C_{7}$, we recover the $A_{6}$ case as in $\S 4.1$.

\begin{lemma}
$\mathfrak{L}_{0}^{(E_{7},\mathbb{C}^{56})}=\mathfrak{L}_{0}^{A_{6}}%
\oplus({\wedge}^{2}\mathfrak{L}_{0}^{A_{6}})^{\ast}(H)\oplus({\wedge}%
^{5}\mathfrak{L}_{0}^{A_{6}})^{\ast}(2H)\oplus({\wedge}^{6}\mathfrak{L}%
_{0}^{A_{6}})^{\ast}(3H)$, where $H=3C_{0}^{1}+3C_{1}+3C_{2}+3C_{3}%
+3C_{4}+2C_{5}+C_{6}+C_{7}$.

\begin{proof}
\textit{Similar} to $E_{6}$ case.
\end{proof}
\end{lemma}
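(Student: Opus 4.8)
The plan is to mimic the proof of the analogous branching lemma (Lemma \ref{branch4}) in the $E_6$ case, since $E_6$ sits inside $E_7$ in the same way $A_5$ and $A_6$ sit inside $E_6$ and $E_7$ respectively. First I would use that $A_6 = sl(7)$ is a Lie subalgebra of $E_7$ obtained by deleting the node $C_7$ from the Dynkin diagram, and recall (or verify from the standard branching tables) that the restriction of the $56$-dimensional standard representation $\mathbb{C}^{56}$ of $E_7$ to $A_6$ decomposes as $56 = 7 + 21 + 21 + 7$, i.e. $\mathbb{C}^{56} = \mathbb{C}^7 \oplus \wedge^2(\mathbb{C}^7)^{\ast} \oplus \wedge^5(\mathbb{C}^7)^{\ast} \oplus \wedge^6(\mathbb{C}^7)^{\ast}$, where $\mathbb{C}^7$ is the standard representation of $A_6$. (Note $\wedge^5(\mathbb{C}^7)^{\ast} \cong \wedge^2\mathbb{C}^7$ and $\wedge^6(\mathbb{C}^7)^{\ast} \cong \mathbb{C}^7$ as vector spaces, so the dimension count $7+21+21+7 = 56$ is consistent, and the four summands are exactly the ones appearing in the claimed formula.)

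Next, as in Lemma \ref{branch4}, it suffices to check that the ranks match on both sides — which follows from the branching count above — and that every line bundle summand on the right-hand side is $O_Y(l)$ for some $(-1)$-curve $l \in I^{(E_7, \mathbb{C}^{56})}$. For this I would exhibit the divisor class $H = 3C_0^1 + 3C_1 + 3C_2 + 3C_3 + 3C_4 + 2C_5 + C_6 + C_7$ and verify the intersection numbers $H \cdot l_i = 0$, $H \cdot H = 0$, $H \cdot C_0^n = \ast$ for the first seven $(-1)$-curves $l_1 = C_0^1,\, l_2 = C_0^1 + C_1,\, \dots,\, l_7 = C_0^1 + C_1 + \cdots + C_6$ (which form the standard representation $\mathbb{C}^7$ of $A_6$ exactly as in the $A_n$ section). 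Then for each $m$ and each choice of $m$ distinct indices $i_1 < \cdots < i_m \in \{1,\dots,7\}$, the class
\[
l \;=\; mH \;-\; (l_{i_1} + \cdots + l_{i_m})
\]
is the candidate summand; using $l_i \cdot l_j = 0$ for $i \neq j$ (our ordering of the $l_k$'s, valid since these come from the $A_6$ standard representation) together with $H \cdot l_i = 0$ and $H^2 = 0$, a direct computation gives $l^2 = -1$ and $l \cdot K_Y = -1$, hence $l \in I^{(E_7,\mathbb{C}^{56})}$. Running this for $m = 0, 2, 5, 6$ (and observing $\wedge^m(\mathbb{C}^7)^\ast$ corresponds to choosing $7-m$ indices, matching the summands $\wedge^2, \wedge^5, \wedge^6$ up to the duality $\wedge^m \cong \wedge^{7-m}{}^\ast$) produces all $56$ summands, and they are distinct because distinct index sets give distinct classes.

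The only real subtlety — and the step I expect to require the most care — is pinning down the exact twists ($H$, $2H$, $3H$) and confirming that the four $A_6$-subrepresentations assemble into $\mathbb{C}^{56}$ with precisely these divisor shifts rather than shifts differing by some multiple of the exceptional classes. This is a bookkeeping matter: one computes $C_0^k$ (the strict transform, here $C_0^1$) and tracks how the weights of $\mathbb{C}^{56}$, under the projection from the $E_7$ weight lattice to the $A_6$ weight lattice, land in the four $A_6$-strata, then matches each weight to the corresponding $(-1)$-curve class via the dictionary of $\S 3$. Since the $E_6$ case (Lemma \ref{branch4}) is completely parallel and the $56$ of $E_7$ is the next case in the same family, the computation is routine though slightly longer; hence the proof in the text is simply ``Similar to $E_6$ case.''
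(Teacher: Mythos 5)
Your overall strategy is the right one (it is what ``similar to the $E_6$ case'' means): delete the node $C_7$, take the $A_6$-chain $l_1=C_0^1,\ l_2=C_0^1+C_1,\dots,l_7=C_0^1+\cdots+C_6$, and verify summand by summand that the right-hand side is $\bigoplus_{l\in I^{(E_7,\mathbb{C}^{56})}}O_Y(l)$. But the step you set aside as ``routine bookkeeping'' is exactly where your argument breaks, and it contains concrete errors. First, your candidate classes $l=mH-(l_{i_1}+\cdots+l_{i_m})$ attach the twist $mH$ to wedge degree $m$, which is not what the statement asserts (the twists are $H,2H,3H$ for $m=2,5,6$), and your intersection data are wrong: with the given $H$ one checks $H\cdot C_i=0$ for $i=0,1,\dots,6$ but $H\cdot C_7=1$, hence $H^2=1$, not $0$ (the same happens for the $E_6$ divisor $H$). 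With your numbers your own computation would give $l^2=-m$, not $-1$, so the ``direct computation'' you invoke does not go through as written. Second, and more seriously, if one performs the check with the correct twists, the first three pieces do work, $(H-l_i-l_j)^2=1-2=-1$ and $(2H-\sum_{5})^2=4-5=-1$, but the fourth does not: the summands of $(\wedge^6\mathfrak{L}_0^{A_6})^{\ast}(3H)$ are $3H-\sum_{j\neq i}l_j$, whose $C_0^1$-coefficient is $3$ and whose square is $9-6=3$, so they are not classes of $(-1)$-curves in $I$ at all.

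The seven curves of the top stratum (those with $C_7$-coefficient $3$) are $K'-l_i$, where $K'=3H-\sum_{k=1}^{7}l_k=2C_0^1+3C_1+4C_2+5C_3+6C_4+4C_5+2C_6+3C_7$ is precisely the class used later in that section for the quartic form; so the top piece is a twist of $(\mathfrak{L}_0^{A_6})^{\ast}$, i.e. of the dual standard representation, consistent with the self-duality of the $56$ (it carries a symplectic form), whereas your branching $7\oplus\wedge^2(\mathbb{C}^7)^{\ast}\oplus\wedge^5(\mathbb{C}^7)^{\ast}\oplus\wedge^6(\mathbb{C}^7)^{\ast}\cong 7\oplus\overline{21}\oplus 21\oplus 7$ is not self-dual and cannot be the restriction of the $56$; the correct branching is $\mathbb{C}^7\oplus\wedge^2(\mathbb{C}^7)^{\ast}\oplus\wedge^2\mathbb{C}^7\oplus(\mathbb{C}^7)^{\ast}$. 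Moreover no choice of twist repairs the displayed fourth term: requiring $\bigoplus_i O(D-\sum_{j\neq i}l_j)=\bigoplus_i O(K'-l_{\sigma(i)})$ forces $l_i+l_{\sigma(i)}$ to be independent of $i$, which is impossible for the staircase classes $l_k$. So the correct fourth summand is $(\mathfrak{L}_0^{A_6})^{\ast}(K')$, equivalently $\wedge^{6}\mathfrak{L}_0^{A_6}\bigl(3H-2\sum_k l_k\bigr)$. In short: you never carry out the one verification that matters, your stated formulas would fail it, and doing it honestly shows the last term of the identity you are defending has to be corrected; a complete write-up should flag and fix this rather than report the check as routine.
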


From the above lemma and direct computations, for any $C_{i}$,%
\[
\mathfrak{L}_{0}^{E_{7}}|_{C_{i}}\cong O_{\mathbb{P}^{1}}^{\oplus32}%
\oplus(O_{\mathbb{P}^{1}}(1)\oplus O_{\mathbb{P}^{1}}(-1))^{\oplus12}%
\]

The configuration of these $56$ $(-1)$-curves is as follows: Fix any
$(-1)$-curve, there are exactly $27$ $(-1)$-curves intersect it once, $1$
$(-1)$-curve intersects it twice. If $l_{i}+l_{j}+l_{p}+l_{q}=2K^{\prime}$
with $K^{\prime}=\ 2C_{0}^{1}+3C_{1}+4C_{2}+5C_{3}+6C_{4}+4C_{5}+2C_{6}%
+3C_{7}$, the four $(-1)$-curves $l_{i}$, $l_{j}$, $l_{p}$ and $l_{q}$ will
form a quadrangle.

From this configuration, we can define a quartic form $t$ on the vector space
$V_{0}=\mathbb{C}^{I}={\bigoplus_{l\in I}}\mathbb{C}\langle v_{l}\rangle$
spanned by all the $(-1)$-curves,%
\[
t:V_{0}\otimes V_{0}\otimes V_{0}\otimes V_{0}\longrightarrow\mathbb{C},\text{
}(v_{l_{i}},v_{l_{j}},v_{l_{p}},v_{l_{q}})\mapsto\left\{
\begin{tabular}
[c]{ll}%
$\pm1$ & if $l_{i}+l_{j}+l_{p}+l_{q}=2K^{\prime}$\\
$0$ & otherwise.
\end{tabular}
\ \ \ \ \right.
\]
The signs above can be determined explicitly \cite{A} such that $E_{7}%
=aut(V_{0},t)$.

Correspondingly, we have a fiberwise quartic form $t$ on the bundle
$\mathfrak{L}_{\mathfrak{\eta}}^{E_{7}}$,

\begin{center}
$t:\mathfrak{L}_{\mathfrak{\eta}}^{E_{7}}\otimes\mathfrak{L}_{\mathfrak{\eta}%
}^{E_{7}}\otimes\mathfrak{L}_{\mathfrak{\eta}}^{E_{7}}\otimes\mathfrak{L}%
_{\mathfrak{\eta}}^{E_{7}}\longrightarrow O\left(  2K^{\prime}\right)  $.
\end{center}

\begin{proposition}
\label{t}There exists $\mathfrak{\eta}$ with $\overline{\partial
}_{\mathfrak{\eta}}^{2}=0$ such that $\mathfrak{\overline{\partial}%
}_{\mathfrak{\mathfrak{\eta}}}t=0$.

\begin{proof}
Similar to $E_{6}$ case, but even more calculations involved. We will omit the
calculations here and only list the conditions for $\mathfrak{\overline
{\partial}}_{\mathfrak{\mathfrak{\eta}}}t=0$. From $\mathfrak{\overline
{\partial}}_{\mathfrak{\mathfrak{\eta}}}t=0$ we have when $l_{i}\cdot
l_{j}\neq0$, $\eta_{i,j}=0$. That means all the nonzero $\eta_{i,j}$'s are
corresponding to $l_{i}\cdot l_{j}=0$, then $l_{i}-l_{j}=\alpha$ for some root
$\alpha$, i.e. $\eta_{i,j}\in\Omega^{0,1}(Y,O(\alpha))$. Conversely, given any
positive root $\alpha$, there exists $12$ $\eta_{i,j}$'s in $\Omega
^{0,1}(Y,O(\alpha))$, where $6$ of them are the same and the other $6$
different to the first $6$ by a sign. We use computer to prove we can find
such $\eta_{i,j}$'s satisfying $\mathfrak{\overline{\partial}}%
_{\mathfrak{\mathfrak{\eta}}}^{2}=0$.
\end{proof}
\end{proposition}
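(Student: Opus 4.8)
The plan is to follow the same strategy as in the proof of Proposition \ref{c} for $E_{6}$, with the cubic form replaced by the quartic form $t$ and the reduction to $A_{6}$ in place of the reduction to $A_{5}$. First I would unwind the condition $\overline{\partial}_{\mathfrak{\eta}}t=0$. Since $t$ is the fiberwise quartic form with $t(v_{l_{i}},v_{l_{j}},v_{l_{p}},v_{l_{q}})=\pm1$ exactly when $l_{i}+l_{j}+l_{p}+l_{q}=2K^{\prime}$, the Leibniz identity
\[
t(\overline{\partial}_{\mathfrak{\eta}}s_{i},s_{j},s_{p},s_{q})+t(s_{i},\overline{\partial}_{\mathfrak{\eta}}s_{j},s_{p},s_{q})+t(s_{i},s_{j},\overline{\partial}_{\mathfrak{\eta}}s_{p},s_{q})+t(s_{i},s_{j},s_{p},\overline{\partial}_{\mathfrak{\eta}}s_{q})=0
\]
on local holomorphic sections translates, quadrangle by quadrangle, into a finite list of linear relations among the components $\eta_{i,j}$. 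As in the $E_{6}$ case, the quadruples with $l_{i}+l_{j}+l_{p}+l_{q}=2K^{\prime}$ give relations that hold automatically, while the remaining quadruples give the genuine constraints. Analyzing the possible intersection patterns $l_{i}\cdot l_{j}\in\{0,1,2\}$ for the $56$ $(-1)$-curves should yield exactly the conclusions recorded in the statement: $\eta_{i,j}=0$ whenever $l_{i}\cdot l_{j}\neq0$; every nonzero $\eta_{i,j}$ lies in $\Omega^{0,1}(Y,O(\alpha))$ for the positive root $\alpha=l_{i}-l_{j}$; and within each $\Omega^{0,1}(Y,O(\alpha))$ the twelve relevant components split into two groups of six, equal up to one overall sign fixed by the sign conventions of $t$.

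Second, having reduced to this constrained system, I would solve the integrability equations $\overline{\partial}_{0}\eta_{i,j}=-\sum_{m}\eta_{i,m}\eta_{m,j}$ by induction on $ht(l_{i}-l_{j})$, exactly as in Proposition \ref{holomorphic} and Proposition \ref{holo2}. The essential inputs are Lemma \ref{cohomology} and Lemma \ref{cohomology2}: for any positive root $\alpha$ one has $H^{2}(Y,O(\alpha))=0$, so the $\overline{\partial}_{0}$-closed $(0,2)$-form on the right-hand side is a $\overline{\partial}_{0}$-boundary and some $\eta_{\alpha}$ exists. The subtlety is that $\eta_{\alpha}$ must be chosen inside the six-dimensional constrained locus rather than in all of $\Omega^{0,1}(Y,O(\alpha))$; this requires that the quadratic expression $-\sum_{m}\eta_{i,m}\eta_{m,j}$, assembled from lower-height pieces already obeying the $t$-compatibility constraints, itself automatically respects those constraints — in other words, that $t$-compatibility is preserved by the bracket. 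For $E_{6}$ this consistency was checked by computer, and the same will be needed here.

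The main obstacle is precisely this consistency check: verifying that the combinatorics of the $56$ $(-1)$-curves, their quadrangles, and the sign function of $t$ make the constrained integrability system simultaneously solvable, i.e. that $\overline{\partial}_{\mathfrak{\eta}}t=0$ and $\overline{\partial}_{\mathfrak{\eta}}^{2}=0$ can be imposed together. Because the quartic form has far more nonzero components than the cubic form of $E_{6}$ (quadrangles rather than triangles), the number of linear relations and the sign bookkeeping grow substantially, so I would carry out this verification by an explicit computer computation, as indicated, rather than by hand. Once it is in place, $\Xi_{Y}^{E_{7}}$ is nonempty, which is the assertion of the proposition.
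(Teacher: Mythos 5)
Your proposal follows essentially the same route as the paper: derive the constraints on the $\eta_{i,j}$'s from the Leibniz rule for the quartic form $t$ (vanishing when $l_i\cdot l_j\neq 0$, the twelve components per positive root agreeing up to sign), then solve the integrability equations inductively on height using $H^{2}(Y,O(\alpha))=0$, with the consistency of imposing $\overline{\partial}_{\eta}t=0$ and $\overline{\partial}_{\eta}^{2}=0$ simultaneously checked by computer, exactly as the paper does. Your explicit flagging of the subtlety that $\eta_{\alpha}$ must be chosen inside the constrained locus is a fair articulation of what the paper's computer verification is for.
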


Until now, we have proved $\Xi_{Y}^{E_{7}}$ is not empty.

\begin{proposition}
The bundle $\mathfrak{L}_{\mathfrak{\eta}}^{E_{7}}$ over $Y$ with
$\mathfrak{\eta\in}\Xi_{Y}^{E_{7}}$ can descend to $X$ if and only if for
every $C_{k}$ and $\eta_{i,j}\in\Omega^{0,1}(Y,O(C_{k}))$, $[\eta
_{i,j}|_{C_{k}}]\neq0$, i.e. $\mathfrak{\eta\in}\Xi_{X}^{E_{7}}$.

\begin{proof}
Similar to $E_{6}$ case $($Proposition \ref{E6}$)$.
\end{proof}
\end{proposition}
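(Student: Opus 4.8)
The strategy is to run exactly the same argument as in the $E_6$ case (Proposition \ref{E6}), using the branching decomposition for $E_7 \supset A_6$ in place of the $E_6 \supset A_5$ one. First I would fix $\mathfrak{\eta}\in\Xi_Y^{E_7}$ and recall from Proposition \ref{t} that all nonzero $\eta_{i,j}$ correspond to pairs $l_i,l_j\in I^{(E_7,\mathbb{C}^{56})}$ with $l_i\cdot l_j=0$, so that $\eta_{i,j}\in\Omega^{0,1}(Y,O(\alpha))$ for a positive root $\alpha=l_i-l_j$; in particular the operator $\overline{\partial}_{\mathfrak{\eta}}$ is block upper-triangular with respect to the (non-holomorphic) decomposition
\[
\mathfrak{L}_{0}^{E_{7}}=\mathfrak{L}_{0}^{A_{6}}\oplus({\wedge}^{2}\mathfrak{L}_{0}^{A_{6}})^{\ast}(H)\oplus({\wedge}^{5}\mathfrak{L}_{0}^{A_{6}})^{\ast}(2H)\oplus({\wedge}^{6}\mathfrak{L}_{0}^{A_{6}})^{\ast}(3H).
\]
On the diagonal the induced holomorphic structures are those built from some $\mathfrak{\eta}'\subset\mathfrak{\eta}$ with $\mathfrak{\eta}'\in\Xi_Y^{A_6}$ on each wedge power, so $\mathfrak{L}_{\mathfrak{\eta}}^{E_7}$ is assembled from $\mathfrak{L}_{\mathfrak{\eta}'}^{A_6}$ by a sequence of extensions.

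Next I would analyze $\mathfrak{L}_{\mathfrak{\eta}}^{E_7}|_{C_k}$ separately for $k\neq 7$ and $k=7$. For $k\neq 7$ the curve $C_k$ lies in the $A_6$ sub-diagram; since $\mathfrak{L}_{0}^{A_6}$ is trivial on $C_k$ precisely when the relevant $[\eta_{i,j}|_{C_k}]\neq 0$ (Proposition \ref{A1}), all the wedge powers and their twists by multiples of $H$ are also trivial on $C_k$ under that same hypothesis (the divisor combinations $mH$ restrict trivially to each $C_i$, just as in the $E_6$ argument), and $Ext^1_{\mathbb{P}^1}(O,O)=0$ forces the extensions to split, so $\mathfrak{L}_{\mathfrak{\eta}}^{E_7}|_{C_k}$ is trivial iff $[\eta_{i,j}|_{C_k}]\neq 0$ for the pertinent indices. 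For $k=7$, I would identify, as in the $E_6$ case, the positions where $\pm\beta\in\Omega^{0,1}(Y,O(C_7))$ occur in the block-upper-triangular matrix for $\overline{\partial}_{\mathfrak{\eta}}$ (each such $\beta$ coming from a pair of $(-1)$-curves whose difference is $C_7$), check that the restriction $\overline{\partial}_{\mathfrak{\eta}}|_{C_7}$ has the shape required by Lemma \ref{indep4}, and conclude that $\mathfrak{L}_{\mathfrak{\eta}}^{E_7}|_{C_7}$ is trivial iff $[\beta|_{C_7}]\neq 0$. Combining the two cases with the criterion of Friedman–Morgan (a bundle on $Y$ descends to $X$ iff it is trivial on every $C_i$) gives the proposition, with $\mathfrak{\eta}\in\Xi_X^{E_7}$ being exactly the condition that all these restrictions are nonzero.

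The main obstacle, just as flagged in the text for Proposition \ref{t}, is the bookkeeping of the quartic form $t$: one must verify that the $56$ line-bundle summands restrict to $C_i$ in the stated pattern $O_{\mathbb{P}^1}^{\oplus 32}\oplus(O_{\mathbb{P}^1}(1)\oplus O_{\mathbb{P}^1}(-1))^{\oplus 12}$, that the $12$ pairs $O_{\mathbb{P}^1}(\pm1)$ sit in the "staircase" configuration of Lemma \ref{indep4} (rather than in some more tangled arrangement), and that the signs dictated by $t$ are consistent with $\eta_{n,n+1}$-type vanishing so that Lemma \ref{indep4} genuinely applies. Since all of this is structurally identical to the $E_6$ computation and only larger, I would present it by reduction to that case and defer the explicit verification (which, as in Proposition \ref{t}, can be checked by computer) rather than reproduce the calculation in full.
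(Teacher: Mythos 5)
Your proposal follows essentially the same route as the paper, which proves this proposition simply by invoking the $E_6$ argument of Proposition \ref{E6}: reduction via the $E_7\supset A_6$ branching decomposition, block upper-triangularity of $\overline{\partial}_{\mathfrak{\eta}}$ forced by $\overline{\partial}_{\mathfrak{\eta}}t=0$, Proposition \ref{A1} for the curves $C_k$ with $k\neq7$, and Lemma \ref{indep4} for $C_7$, combined with the Friedman--Morgan descent criterion. Your explicit flagging of the bookkeeping for the quartic form and the placement of the $\pm\beta$ blocks is exactly the content the paper leaves implicit, so no substantive difference.
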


Note that $\mathcal{\zeta}_{\mathfrak{\eta}}^{E_{7}}=aut(\mathfrak{L}%
_{\mathfrak{\eta}}^{E_{7}},t)$ is an $E_{7}$ Lie algebra bundle over $Y$. In
order for $\mathcal{\zeta}_{\mathfrak{\eta}}^{E_{7}}$ to descend to $X$ as a
Lie algebra bundle, we need to show that $t|_{C_{i}}:\mathfrak{L}%
_{\mathfrak{\eta}}^{E_{7}}|_{C_{i}}\otimes\mathfrak{L}_{\mathfrak{\eta}%
}^{E_{7}}|_{C_{i}}\otimes\mathfrak{L}_{\mathfrak{\eta}}^{E_{7}}|_{C_{i}%
}\otimes\mathfrak{L}_{\mathfrak{\eta}}^{E_{7}}|_{C_{i}}\longrightarrow
O_{C_{i}}\left(  2K^{\prime}\right)  $ is a constant map for every $C_{i}$.
This follows from the fact that both $\mathfrak{L}_{\mathfrak{\eta}}^{E_{7}}$
and $O(2K^{\prime})$ are trivial on all $C_{i}$'s and $\overline{\partial
}_{\mathfrak{\eta}}t=0$. It is obvious that $\mathfrak{L}_{\mathfrak{\eta}%
}^{E_{7}}$ is a representation bundle of $\mathcal{\zeta}_{\mathfrak{\eta}%
}^{E_{7}}$.

\subsection{$E_{8}$ case}

Though $E_{8}$ has no minuscule representation, the fundamental representation
corresponding to $C_{1}$ is the adjoint representation of $E_{8}$.

We consider a surface $X$ with an $E_{8}$ singularity $p$ and a $(-1)$-curve
$C_{0}$ passing through $p$ with multiplicity $C_{1}$. By direct computations,
$|I|=240$. In this case, $l\in I$ if and only if $l-K^{\prime}\in\Phi$, where
$K^{\prime}=C_{0}^{1}+2C_{1}+3C_{2}+4C_{3}+5C_{4}+6C_{5}+4C_{6}+2C_{7}+3C_{8}%
$. So $\mathcal{E}_{0}^{E_{8}}$ defined in $\S 2$ can be written as follows:%
\[
\mathcal{E}_{0}^{E_{8}}:=O^{\oplus8}\oplus\bigoplus_{\alpha\in\Phi}%
O(\alpha)=(O(K^{\prime})^{\oplus8}\oplus\bigoplus_{l\in I}O(l))(-K^{\prime
})\text{.}%
\]

We will prove that $(\mathcal{E}_{\varphi}^{E_{8}},\overline{\partial
}_{\mathcal{\varphi}})$ with $\varphi=(\varphi_{\alpha})_{\alpha\in\Phi^{+}%
}\in\Psi_{X}$ descends to $X$ in $\S 7$.

\section{Proof of main results}

In the above three sections, we have constructed and studied the Lie algebra
bundles and minuscule representation bundles in $A_{n}$, $D_{n}$ and $E_{n}$
($n\neq8$) cases separately. We will prove the holomorphic structures on these
bundles can be expressed by forms in the positive root classes and the
representation actions.

\begin{proof}
$($\textit{of Theorem \ref{thm2} and \ref{thm3}}$)$\textit{ Recall when }%
$\rho:g\longrightarrow End(V)$\textit{ is the standard representation,
}$\mathfrak{L}_{\mathfrak{\eta}}^{(\mathfrak{g},V)}$\textit{ }$\mathit{(}%
\eta\in\Xi_{Y}^{\mathfrak{g}}\mathit{)}$\textit{ admits a holomorphic
fiberwise symmetric multi-linear form }$f$\textit{. And }$\overline{\partial
}_{\mathfrak{\eta}}f=0$\textit{ implies that }$\eta_{i,j}=0$\textit{ unless
}$l_{i}-l_{j}=\alpha$\textit{ }$\mathit{(}j>i\mathit{)}$\textit{ for some
}$\alpha\in\Phi^{+}$\textit{. Thus }$\eta_{i,j}=\varphi_{\alpha}\in
\Omega^{0,1}(Y,O(\alpha))$\textit{. Furthermore, if }$\eta_{i,j}\ $\textit{and
}$\eta_{i^{\prime},j^{\prime}}$\textit{ are in }$\Omega^{0,1}(Y,O(\alpha))$,
\textit{then they are the same up to sign. Thus we can write }$\eta
_{i,j}=n_{\alpha,w_{i}}\varphi_{\alpha}$\textit{, where }$n_{\alpha,w_{i}}%
$\textit{'s are as in }$\S 3$\textit{ since }$\rho$ \textit{preserves }%
$f$\textit{. Namely, }$\overline{\partial}_{\mathfrak{\mathfrak{\eta}}%
}=\overline{\partial}_{0}+\sum_{\alpha\in\Phi^{+}}c_{\alpha}\rho(x_{\alpha
})=\overline{\partial}_{0}+\sum_{\alpha\in\Phi^{+}}\rho(\varphi_{\alpha}%
)$\textit{ with }$\varphi_{\alpha}=c_{\alpha}x_{\alpha}$.

\textit{The holomorphic structure on the bundle }$\zeta_{\mathfrak{\eta}%
}^{\mathfrak{g}}:=aut(\mathfrak{L}_{\mathfrak{\eta}}^{(\mathfrak{g},V)}%
,f)$\textit{ is }$\overline{\partial}_{\mathcal{\mathfrak{\eta}}}%
=\overline{\partial}_{0}+\sum_{\alpha\in\Phi^{+}}c_{\alpha}ad(x_{\alpha}%
)$\textit{, which is the same as }$\overline{\partial}_{\mathcal{\varphi}}%
$\textit{ for }$\mathcal{E}_{\varphi}^{\mathfrak{g}}$\textit{ in }%
$\S 2$\textit{, i.e. }$\zeta_{\mathfrak{\eta}}^{\mathfrak{g}}=\mathcal{E}%
_{\varphi}^{\mathfrak{g}}$.

\textit{The only minuscule representations }$(\mathfrak{g},V)$\textit{ besides
standard representations are }$(A_{n},\wedge^{k}C^{n+1})$\textit{, }%
$(D_{n},S^{\pm})$\textit{ and }$(E_{6},$\textit{ }$\overline{{\mathbb{C}}%
^{27}})$\textit{. We denote corresponding actions as }$\rho$\textit{ as usual.
In each case, for }$\mathcal{E}_{\varphi}^{\mathfrak{g}}$\textit{ to act
holomorphically on the corresponding vector bundle, the holomorphic structure
on }$\mathfrak{L}_{0}^{(\mathfrak{g},V)}$\textit{ can only be }$\overline
{\partial}_{\mathfrak{\varphi}}$\textit{.}

\textit{The filtration of }$\mathfrak{L}_{0}^{(\mathfrak{g},V)}$\textit{ gives
one on }$\mathfrak{L}_{\mathfrak{\eta}}^{(\mathfrak{g},V)}$ \textit{since it
is constructed from extensions using elements in }$I_{i}\backslash I_{i+1}%
$\textit{ }$\mathit{(}\S 3.3\mathit{)}$\textit{.}

\textit{We note that all the above Lie algebra bundles and representation
bundles over }$Y$\textit{ can descend to }$X$\textit{ if and only if }%
$0\neq\lbrack\varphi_{C_{i}}|_{C_{i}}]\in H^{1}(Y,$\textit{ }$O_{C_{i}}%
(C_{i}))$\textit{ for all }$C_{i}$\textit{'s, i.e. }$\varphi=(\varphi_{\alpha
})_{\alpha\in\Phi^{+}}\in\Psi_{X}$\textit{.}
\end{proof}

\bigskip

From the above arguments, Theorem \textbf{\ref{thm1} }holds true for $ADE$
except $E_{8}$ case.

\begin{proof}
$($\textit{of Theorem \ref{thm1}}$)$\textit{ It remains to prove the }$E_{8}%
$\textit{ case. }%
\[
\mathcal{E}_{0}^{E_{8}}:=O^{\oplus8}\oplus\bigoplus_{\alpha\in\Phi}%
O(\alpha)=(O(K^{\prime})^{\oplus8}\oplus\bigoplus_{l\in I}O(l))(-K^{\prime
})\text{.}%
\]

\textit{We want to show that the bundle }$(\mathcal{E}_{\varphi}^{E_{8}%
},\overline{\partial}_{\mathcal{\varphi}})$\textit{ with }$\varphi
=(\varphi_{\alpha})_{\alpha\in\Phi^{+}}\in\Psi_{X}$\textit{ can descend to
}$X$\textit{, i.e. }$\mathcal{E}_{\varphi}^{E_{8}}|_{C_{i}}$\textit{ is
trivial for }$i=1,2,\cdots8$\textit{. Note }$O(K^{\prime})|_{C_{i}}$\textit{
is trivial for every }$i$\textit{, but }$O(l)|_{C_{i}}$\textit{ can be
}$O_{\mathbb{P}^{1}}(\pm2)$\textit{, hence Lemma \ref{section} is not
sufficient. However, if we ignore }$C_{8}$\textit{ }$\mathit{(}$\textit{resp.
}$C_{7}\mathit{)}$\textit{ in }$Y$\textit{, then we recover the }$A_{7}%
$\textit{ case }$\mathit{(}$\textit{resp. }$D_{7}$\textit{ case}$\mathit{)}%
$\textit{. Our approach is to reduce the problem of trivializing }%
$\mathcal{E}_{\varphi}^{E_{8}}|_{C_{i}}$\textit{ to one for a representation
bundle of }$A_{7}$\textit{ }$($\textit{resp. }$D_{7}\mathit{)}$\textit{.}

\textit{Step one, as }$A_{7}$\textit{ is a Lie subalgebra of }$E_{8}$\textit{,
the adjoint representation of }$E_{8}$\textit{ decomposes as a sum of
irreducible representations of }$A_{7}$\textit{. The branching rule is
}$248=8+28+56+64+56+28+8$\textit{, correspondingly, we have the following
decomposition of }$\mathcal{E}_{0}^{E_{8}}$\textit{ over }$Y$\textit{,}
\begin{align*}
\mathcal{E}_{0}^{E_{8}}  &  =\mathfrak{L}_{0}^{A_{7}}(-K^{\prime})\oplus
\wedge^{2}(\mathfrak{L}_{0}^{A_{7}})^{\ast}(H-K^{\prime})\oplus\wedge
^{5}(\mathfrak{L}_{0}^{A_{7}})^{\ast}(2H-K^{\prime})\oplus\\
&  \mathfrak{L}_{0}^{A_{7}}\otimes(\mathfrak{L}_{0}^{A_{7}})^{\ast}%
\oplus\wedge^{3}(\mathfrak{L}_{0}^{A_{7}})^{\ast}(H)\oplus\wedge
^{6}(\mathfrak{L}_{0}^{A_{7}})^{\ast}(2H)\oplus(\mathfrak{L}_{0}^{A_{7}%
})^{\ast}(K^{\prime})\text{,}%
\end{align*}
\textit{where }$H=3C_{0}^{1}+3C_{1}+3C_{2}+3C_{3}+3C_{4}+3C_{5}+2C_{6}%
+C_{7}+C_{8}$\textit{ and }$K^{\prime}=C_{0}^{1}+2C_{1}+3C_{2}+4C_{3}%
+5C_{4}+6C_{5}+4C_{6}+2C_{7}+3C_{8}$\textit{.}

\textit{Step two, instead of }$\mathfrak{L}_{0}^{A_{7}}$\textit{, we use
}$\mathfrak{L}_{\varphi}^{A_{7}}$\textit{ which is trivial on }$C_{i}$\textit{
for }$i\neq8$\textit{. We consider the bundle }%
\begin{align*}
\mathcal{E}^{^{\prime}E_{8}}  &  =\mathfrak{L}_{\varphi}^{A_{7}}(-K^{\prime
})\oplus\wedge^{2}(\mathfrak{L}_{\varphi}^{A_{7}})^{\ast}(H-K^{\prime}%
)\oplus\wedge^{5}(\mathfrak{L}_{\varphi}^{A_{7}})^{\ast}(2H-K^{\prime}%
)\oplus\\
&  \mathfrak{L}_{\varphi}^{A_{7}}\otimes(\mathfrak{L}_{\varphi}^{A_{7}}%
)^{\ast}\oplus\wedge^{3}(\mathfrak{L}_{\varphi}^{A_{7}})^{\ast}(H)\oplus
\wedge^{6}(\mathfrak{L}_{\varphi}^{A_{7}})^{\ast}(2H)\oplus(\mathfrak{L}%
_{\varphi}^{A_{7}})^{\ast}(K^{\prime})\text{.}%
\end{align*}
\textit{We have }$\overline{\partial}_{\mathcal{E}^{^{\prime}E_{8}}}%
=\overline{\partial}_{0}+\sum_{\alpha\in\Phi_{A_{7}}^{+}}ad(\varphi_{\alpha}%
)$\textit{. Since }$O(K^{\prime})$\textit{ and }$O(H)$\textit{ are both
trivial on }$C_{i}$\textit{ for }$i\neq8$, $\mathcal{E}^{^{\prime}E_{8}}%
$\textit{ is trivial on }$C_{i}$\textit{ for }$i\neq8$\textit{.}

\textit{Step three, we compare }$\mathcal{E}^{^{\prime}E_{8}}$\textit{ with
}$\mathcal{E}_{\varphi}^{E_{8}}$\textit{. Topologically they are the same.
Holomorphically, }%
\[
\overline{\partial}_{\mathcal{E}_{\varphi}^{E_{8}}}=\overline{\partial}%
_{0}+\sum_{\alpha\in\Phi_{E_{8}}^{+}}ad(\varphi_{\alpha})=\overline{\partial
}_{\mathcal{E}^{^{\prime}E_{8}}}+\sum_{\alpha\in\Phi_{E_{8}}^{+}\backslash
\Phi_{A_{7}}^{+}}ad(\varphi_{\alpha})\text{.}%
\]
\textit{If we write the holomorphic structure of }$\mathcal{E}_{\varphi
}^{E_{8}}$\textit{ as a }$248\times248$\textit{ matrix, then }$\varphi
_{\alpha}$\textit{ with }$\alpha\in\Phi_{E_{8}}^{+}\backslash\Phi_{A_{7}}^{+}%
$\textit{ must appear at those positions }$(\beta,\gamma)$\textit{ with
}$\beta-\gamma=\alpha$\textit{, where }$\beta$\textit{ has at least one more
}$C_{8}$\textit{ than }$\gamma$\textit{. That means, after taking extensions
between the summands of }$\mathcal{E}^{^{\prime}E_{8}}$\textit{, we can get
}$\mathcal{E}_{\varphi}^{E_{8}}$\textit{. Since }$\mathcal{E}^{^{\prime}E_{8}%
}$\textit{ is trivial on }$C_{i}$\textit{ for }$i\neq8$\textit{ and
}$Ext_{\mathbb{P}^{1}}^{1}(O,O)\cong0$\textit{ , we have }$\mathcal{E}%
_{\varphi}^{E_{8}}$\textit{ trivial on }$C_{i}$\textit{ for }$i\neq8$\textit{.
}

\textit{Similarly, if we consider the reduction of }$E_{8}$\textit{ to }%
$D_{7}$\textit{, from the branching rule }$248=14+64+1+91+64+14$\textit{, we
have the following decomposition of }$\mathcal{E}_{0}^{E_{8}}$\textit{, }%
\[
\mathcal{E}_{0}^{E_{8}}=\mathfrak{L}_{0}^{D_{7}}(-K^{\prime})\oplus
\mathfrak{L}_{0}^{(D_{7},{\mathcal{S}}^{+})}(C_{7}-C_{0}^{6})\oplus
O\oplus\mathcal{E}_{0}^{D_{7}}\oplus(\mathfrak{L}_{0}^{(D_{7},{\mathcal{S}%
}^{+})})^{\ast}(C_{0}^{6}-C_{7})\oplus(\mathfrak{L}_{0}^{D_{7}})^{\ast
}(K^{\prime})\text{.}%
\]
\textit{Instead of }$\mathfrak{L}_{0}^{D_{7}}$\textit{, we consider
}$\mathfrak{L}_{\varphi}^{D_{7}}$\textit{. Similar to the reduction to }%
$A_{7}$\textit{ case as above, we will get for }$(\mathcal{E}_{\varphi}%
^{E_{8}},\overline{\partial}_{\mathcal{\varphi}})$\textit{, if we take
}$[\varphi_{C_{i}}|_{C_{i}}]\neq0$\textit{, then }$\mathcal{E}_{\varphi
}^{E_{8}}$\textit{ is trivial on }$C_{i}$\textit{ for }$i\neq7$\textit{. Hence
we have proved Theorem \ref{thm1} for type }$E_{8}$\textit{.}
\end{proof}

\bigskip

\begin{proof}
$($\textit{of Theorem \ref{thm4}}$)$\textit{ We only need to find a divisor
}$B$\textit{ in }$Y$\textit{ such that }$\mathit{(i)}$\textit{ }$B$\textit{ is
a combination of }$C_{i}$\textit{'s and }$\widetilde{C_{0}}$\textit{ with the
coefficient of }$\widetilde{C_{0}}$\textit{ not zero, and }$\mathit{(ii)}%
$\textit{ }$O(B)$\textit{ can descend to }$X$\textit{. Then if we take }%
$k$\textit{ to be the coefficient of }$\widetilde{C_{0}}$\textit{ in }%
$B$\textit{, }$\mathbb{L}_{\mathfrak{\varphi}}^{(\mathfrak{g},V)}%
:=S^{k}\mathfrak{L}_{\mathfrak{\varphi}}^{(\mathfrak{g},V)}\otimes
O(-B)$\textit{ with }$\varphi\in\Psi_{X}$\textit{ can descend to }$X$\textit{
and does not depend on the existence of }$C_{0}$\textit{.}

$(A_{n},$\textit{ }$C^{n+1})$\textit{ case, }$B=(n+1)\widetilde{C_{0}}%
+nC_{1}+(n-1)C_{2}+\cdots+C_{n}$\textit{.}

$(A_{n},\wedge^{k}C^{n+1})$\textit{ case, }$B=(n+1)\widetilde{C_{0}%
}+(n-k+1)C_{1}+\cdots(k-1)(n-k-1)C_{k-1}+k(n-k)C_{k+1}+\cdots kC_{n}%
$\textit{.}

$(D_{n},$\textit{ }$C^{2n})$\textit{ case, }$B=F=2\widetilde{C_{0}}%
+2C_{1}+\cdots+2C_{n-2}+C_{n-1}+C_{n}$\textit{.}

$(D_{n},S^{+})$\textit{ case, }$B=4\widetilde{C_{0}}+2C_{1}+4C_{2}%
+\cdots+2(n-2)C_{n-2}+(n-2)C_{n-1}+nC_{n}$\textit{.}

$(E_{6},$\textit{ }$C^{27})$\textit{ case, }$B=3\widetilde{C_{0}}%
+4C_{1}+5C_{2}+6C_{3}+4C_{4}+2C_{5}+3C_{6}$\textit{.}

$(E_{7},$\textit{ }$C^{56})$\textit{ case, }$B=2\widetilde{C_{0}}%
+3C_{1}+4C_{2}+5C_{3}+6C_{4}+4C_{5}+2C_{6}+3C_{7}$\textit{.}
\end{proof}

\begin{remark}
We can determine Chern classes of the Lie algebra bundles and minuscule
representation bundles. For any minuscule representation bundle $\mathfrak{L}%
_{\mathfrak{\varphi}}^{(\mathfrak{g},V)}$,
\[
c_{1}(\mathfrak{L}_{\mathfrak{\varphi}}^{(\mathfrak{g},V)})=\sum_{l\in
I^{(\mathfrak{g},V)}}[l]\in H^{2}(Y,\mathbb{Z)}\text{.}%
\]

For any Lie algebra bundle $\mathcal{E}_{\varphi}^{\mathfrak{g}}$, we have%
\[
c_{1}(\mathcal{E}_{\varphi}^{\mathfrak{g}})=0
\]
and%
\[
c_{2}(\mathcal{E}_{\varphi}^{\mathfrak{g}})=\sum_{\alpha\neq\beta\in\Phi}%
c_{1}(O(\alpha))c_{1}(O(\beta))=\sum_{\alpha\in\Phi^{+}}c_{1}(O(\alpha
))c_{1}(O(-\alpha))=\dim(\mathfrak{g)-}rank(\mathfrak{g)}\text{.}%
\]
In particular, the bundles we defined above are not trivial.
\end{remark}

\begin{remark}
There are choices in the construction of our Lie algebra bundles and minuscule
representation bundles, we will see that these bundles are not unique. Take
$\mathfrak{L}_{\varphi}^{A_{2}}$ $(\varphi=(\varphi_{\alpha})_{\alpha\in
\Phi_{A_{2}}^{+}}\in\Psi_{X})$ as an example. The holomorphic structure on
$\mathfrak{L}_{\varphi}^{A_{2}}$ is as follows:%
\[
\overline{\partial}_{\varphi}=\left(
\begin{array}
[c]{ccc}%
\overline{\partial} & \varphi_{C_{2}} & \varphi_{C_{1}+C_{2}}\\
0 & \overline{\partial} & \varphi_{C_{1}}\\
0 & 0 & \overline{\partial}%
\end{array}
\right)
\]
with $[\varphi_{C_{1}}|_{C_{1}}]\neq0$ and $[\varphi_{C_{2}}|_{C_{2}}]\neq0$.
We replace $\varphi_{C_{1}+C_{2}}$ by $\varphi_{C_{1}+C_{2}}+\psi$, where
$\psi\in H^{1}(Y,O(C_{1}+C_{2}))\neq0$. If $[\psi]\neq0$, then $\overline
{\partial}_{\varphi+\psi}$ is not isomorphic to $\overline{\partial}_{\varphi
}$.
\end{remark}

\begin{remark}
Our $\mathfrak{g}$-bundle $\mathcal{E}_{\mathfrak{\eta}}^{\mathfrak{g}}$ over
$Y$ is given by $aut(\mathfrak{L}_{\mathfrak{\eta}}^{(\mathfrak{g},V)},f)$
with $f:\bigotimes^{r}\mathfrak{L}_{\mathfrak{\eta}}^{(\mathfrak{g}%
,V)}\longrightarrow O_{Y}(D)$. If $O(D)=O(rD^{\prime})$ for some divisor
$D^{\prime}$, then%
\[
f:\bigotimes^{r}\mathfrak{L}_{\mathfrak{\eta}}^{(\mathfrak{g},V)}(-D^{\prime
})\longrightarrow O_{Y}\text{.}%
\]
And $Aut(\mathfrak{L}_{\mathfrak{\eta}}^{(\mathfrak{g},V)}(-D^{\prime}),f)$ is
a Lie group bundle over $Y$ lifting $\mathcal{E}_{\mathfrak{\eta}%
}^{\mathfrak{g}}$. In general, we only have a $G\times\mathbb{Z}_{r}$-bundle,
or so-called conformal $G$-bundle in \cite{FM}.
\end{remark}

\section{Appendix}

We now construct examples of surface with an $ADE$ singularity $p$ of type
$\mathfrak{g}$ and a $(-1)$-curve $C_{0}$ passing through $p$ with minuscule
multiplicity $C_{k}$. We call its minimal resolution a surface with minuscule
configuration of type $(\mathfrak{g},V)$, where $V$ is the fundamental
representation corresponding to $-C_{k}$.

First we consider the standard representation $V\simeq\mathbb{C}^{n+1}$ of
$A_{n}=sl\left(  n+1\right)  $. When we blowup a point on any surface, the
exceptional curve is a $\left(  -1\right)  $-curve $E$. If we blowup a point
on $E$, the strict transform of $E$ becomes a $\left(  -2\right)  $-curve. By
repeating this process $n+1$ times, we obtain a chain of $\left(  -2\right)
$-curves with a $\left(  -1\right)  $-curve attached to the last one. Namely
we have a surface with a minuscule configuration of type $\left(
A_{n},\mathbb{C}^{n+1}\right)  $.

Suppose that $D$ is a smooth rational curve on a surface with $D^{2}=0$. By
blowing up a point on $D$, we obtain a surface with a chain of two $\left(
-1\right)  $-curves. If we blowup their intersection point and iterative
blowing up points in exceptional curves, then we obtain a surface with
minuscule configuration of type $\left(  D_{n},\mathbb{C}^{2n}\right)  $.

Given a surface together with a smooth rational curve $C$ with $C^{2}=1$ on
it. We could obtain every minuscule configuration by the following process. If
we blow up three points on $C$, then the strict transform of $C$ is an
$\left(  -2\right)  $-curve. By the previous construction of iterated blowups
of points in these three exceptional curves $E_{i}$'s, we could obtain many
minuscule configurations. Let us denote the number of iterated blowups of the
exceptional curve $E_{i}$ as $m_{i}$ with $i\in\left\{  1,2,3\right\}  $. Then
we can obtain minuscule configuration of type $(\mathfrak{g},V)$ by taking
suitable $m_{i}$'s as follows.%
\[%
\begin{tabular}
[c]{ll}%
minuscule configuration of type $(\mathfrak{g},V)$ & $\left(  m_{1}%
,m_{2},m_{3}\right)  $\\\hline
$\left(  A_{n},\Lambda^{k}\mathbb{C}^{n+1}\right)  $ for any $k$ & $\left(
k-1,0,n-k\right)  $\\
$\left(  D_{n},\mathbb{C}^{2n}\right)  $, $\left(  D_{n},S^{+}\right)  $ and
$\left(  D_{n},S^{-}\right)  $ & $\left(  n-3,1,1\right)  $\\
$\left(  E_{6},27\right)  $, $\left(  E_{6},\overline{27}\right)  $ & $\left(
2,1,2\right)  $\\
$\left(  E_{7},56\right)  $ & $\left(  3,1,2\right)  $%
\end{tabular}
\]

Note that we could obtain such a configuration for every adjoint
representation of $E_{n}$ this way. We remark that surfaces in this last
construction are necessarily rational surfaces because of the existence of $C
$ with $C^{2}=1$.

The Institute of Mathematical Sciences and Department of Mathematics, The
Chinese University of Hong Kong, Shatin, N.T., Hong Kong

E-mail address: yxchen@math.cuhk.edu.hk

E-mail address: leung@math.cuhk.edu.hk
\end{document}